\DeclareMathOperator{\add}{add}
\DeclareMathOperator{\Add}{Add}
\DeclareMathOperator{\Prod}{Prod}
\DeclareMathOperator{\Gen}{Gen}
\DeclareMathOperator{\Cogen}{Cogen}
\DeclareMathOperator{\vect}{vect}
\DeclareMathOperator{\coh}{coh}
\DeclareMathOperator{\Qcoh}{Qcoh}
\newcommand{\M}{\mbox{\rm Mod-$R$}}
\newcommand{\lmla}{\mbox{\rm $\Lambda$-mod}}
\newcommand{\LMla}{\mbox{\rm $\Lambda$-Mod}}
\newcommand{\Mla}{\mbox{\rm Mod-$\Lambda$}}
\newcommand{\mla}{\mbox{\rm mod-$\Lambda$}}
\newcommand{\Z}{\mathbb{Z}}
\newcommand{\Q}{\mathbb{Q}}
\newcommand{\N}{\mathbb{N}}
\newcommand{\R}{\mathbb{R}}
\newcommand{\XX}{\mathbb{X}}
\DeclareMathOperator{\Hom}{Hom}
\DeclareMathOperator{\End}{End}
\DeclareMathOperator{\Ext}{Ext}
\DeclareMathOperator{\Tor}{Tor}
\DeclareMathOperator{\Ker}{Ker}
\DeclareMathOperator{\Coker}{Coker}
\newcommand{\A}{\ensuremath{\mathcal{A}}}
\newcommand{\B}{\ensuremath{\mathcal{B}}}
\newcommand{\C}{\ensuremath{\mathcal{C}}}
\newcommand{\Acal}{\ensuremath{\mathcal{A}}}
\newcommand{\Pcal}{\ensuremath{\mathcal{P}}}
\newcommand{\Ical}{\ensuremath{\mathcal{I}}}
\newcommand{\Rcal}{\ensuremath{\mathcal{R}}}
\newcommand{\Fcal}{\ensuremath{\mathcal{F}}}
\newcommand{\Dcal}{\ensuremath{\mathcal{D}}}
\newcommand{\Scal}{\ensuremath{\mathcal{S}}}
\newcommand{\Ucal}{\ensuremath{\mathcal{U}}}
\newcommand{\Tcal}{\ensuremath{\mathcal{T}}}
\newcommand{\Mcal}{\ensuremath{\mathcal{M}}}
\newcommand{\Ecal}{\ensuremath{\mathcal{E}}}
\newcommand{\Ccal}{\ensuremath{\mathcal{C}}}
\newcommand{\Qcal}{\ensuremath{\mathcal{Q}}}
\newcommand{\Bcal}{\ensuremath{\mathcal{B}}}
\newcommand{\T}{\ensuremath{\mathcal{T}}}
\newcommand{\F}{\ensuremath{\mathcal{F}}}
\newcommand{\fpA}{\ensuremath{\mathrm{fp}\,\mathcal{A}}}
\newcommand{\p}{\ensuremath{\mathbf{p}}}
\newcommand{\q}{\ensuremath{\mathbf{q}}}
\newcommand{\tube}{\ensuremath{\mathbf{t}}}
\newcommand{\Serre}{\ensuremath{\mathfrak{S}}}
\newcommand{\ifa}{if and only if}
\theoremstyle{plain}
\newtheorem{thm}{Theorem}[section]
\newtheorem{prop}[thm]{Proposition}
\newtheorem{lemma}[thm]{Lemma}
\newtheorem{cor}[thm]{Corollary}
\theoremstyle{definition}
\theoremstyle{remark}
\newtheorem{rem}{\bf Remark}
\newcommand{\La}{\Lambda}
\newcommand{\Dercat}{\ensuremath{D}}
\newcommand{\dba}{\Dercat(\Acal)}
\newcommand{\Db}{\mbox{$\mathcal \Dercat(\Mla)$}}
\begin{document}

\title[Tilting and cotilting modules over concealed canonical
algebras]{Tilting and cotilting modules\\ over concealed canonical
  algebras} 

\author[L. Angeleri H\"ugel]{Lidia Angeleri H\"ugel}
\address{Universit\`a degli Studi di Verona\\
 Strada Le Grazie 15 - Ca' Vignal 2\\
 I - 37134 Verona\\
 Italy}
\email{lidia.angeleri@univr.it}
\author[D. Kussin]{Dirk Kussin}
\address{{Instytut Matematyki \\ Uniwersytet Szczeci\'{n}ski \\ 70451
  Szczecin \\ Poland}}
\email{dirk@math.uni-paderborn.de}

\subjclass[2010]{Primary: 16E30, 16G20, 16G70; secondary: 16P50, 16S10}

\begin{abstract}
  We study infinite dimensional tilting modules over a concealed
  canonical algebra of domestic or tubular type. In the domestic case,
  such tilting modules are constructed by using the technique of
  universal localization, and they can be interpreted in terms of
  Gabriel localizations of the corresponding category of
  quasi-coherent sheaves over a noncommutative curve of genus zero. In
  the tubular case, we have to distinguish between tilting modules of
  rational and irrational slope. For rational slope the situation is
  analogous to the domestic case. In contrast, for any irrational
  slope, there is just one tilting module of that slope up to
  equivalence.  We also provide a dual description of infinite
  dimensional cotilting modules and a classification result for the
  indecomposable pure-injective modules.
\end{abstract}

\maketitle

\section{Introduction}
Infinite dimensional modules over canonical algebras have been studied
by several authors, see e.g.~\cite{R,RR,H,HP}. Particular attention
has been devoted to the problem of classifying the pure-injective
modules over canonical algebras of tubular type \cite{Rac, H, HP}. In
this paper, we approach the problem from the viewpoint of tilting
theory. Using the methods developed in \cite{AS2} for tame hereditary
algebras, we study the infinite dimensional tilting and cotilting
modules over a concealed canonical algebra $\La$ of domestic or
tubular type. The knowledge of these modules allows to obtain
classification results for the indecomposable pure-injective
$\La$-modules. It further enables us to reinterpret the classification
from \cite{AK} of the large quasi-coherent tilting sheaves over a
noncommutative curve of genus zero in terms of modules over a derived
equivalent algebra.

\smallskip

As in the hereditary case, the technique of universal localization due
to Cohn and Schofield plays a fundamental role in the construction of
tilting modules.  Indeed, if $\tube=\bigcup_{x\in\mathbb X} \Ucal_x$
is a stable, sincere, separating tubular family yielding a trisection
$(\p,\tube,\q)$ of the finite dimensional indecomposable
$\La$-modules, then any universal localization $\La\to\La_\Ucal$ at a
set $\Ucal$ of modules in $\tube$ is injective and gives rise to a
tilting module $\La_\Ucal\oplus \La_\Ucal/\La$.

\smallskip

If $\La$ has only homogeneous tubes, we obtain all but one infinite
dimensional tilting $\La$-modules in this way (up to equivalence). The
missing one is called Lukas tilting module, and it generates the class
$\Bcal$ of all modules without non-zero maps to the modules in $\p$.
More generally, if $\La$ has domestic representation type, large
tilting modules can have a more complicated shape involving also
finite dimensional summands, but the infinite dimensional part can
still be described in terms of universal localizations $\La_\Ucal$ and
Lukas tilting modules over such $\La_\Ucal$. We thus obtain a
classification of the large tilting, and by duality, of the large
cotilting modules, which is completely analogous to the tame
hereditary case treated in \cite{AS2,BK}, see Theorem
\ref{classdomestic}. In Corollaries \ref{onlysur} and \ref{Kronecker}
we also see that large tilting modules correspond to Gabriel
localizations of the category of quasi-coherent sheaves $\Qcoh\XX$
over the noncommutative curve of genus zero $\XX$ corresponding to
$\La$.

\smallskip

Let us remark that in the domestic case all large tilting or cotilting
modules are located in the ``central part'' of $\Mla$, that is, given
the trisection $(\p,\tube,\q)$, they admit neither maps to $\p$ nor
maps from $\q$. In other words, they belong to the intersection
$\Mcal=\Bcal\cap\Ccal$ of the torsion class $\Bcal$ of the torsion
pair $(\Bcal,\Pcal)$ in $\Mla$ generated by $\p$ with the torsion-free
class $\Ccal$ of the torsion pair $(\Qcal, \Ccal)$ cogenerated by
$\q$.

\smallskip

In the tubular case, the AR-quiver consists of a preprojective
component $\p_0$, a preinjective component $\q_\infty$, two non-stable
tubular families $\tube_0$ and $\tube_\infty$, and a countable number
of stable, sincere, separating tubular families $\tube_w, w\in \Q^+,$
giving rise to trisections $(\p_w,\tube_w,\q_w)$ of the finite
dimensional indecomposable $\La$-modules.  So, one can construct a
class $\Mcal_w=\Bcal_w\cap\Ccal_w$ as above for every $w\in\Q^+$.  For
irrational $w\in\R^+$, one divides the finite dimensional
indecomposable $\La$-modules into the class $\p_w$ of all modules that
belong to $\p_0$ or to one of the tubular families $\tube_v$ with
$v<w$, and the class $\q_w$ given by the remaining modules. The
corresponding torsion class $\Bcal_w$ and torsion-free class $\Ccal_w$
have an intersection $\Mcal_w$ which will only contain infinite
dimensional modules.

\smallskip

Following \cite{RR}, we say that the modules in $\Mcal_w$ have slope
$w$. This yields a notion of slope which is a module-theoretic
counterpart of the notion of slope for sheaves over a noncommutative
curve of genus zero, and which can be defined also for infinite
dimensional modules. Indeed, it is shown in \cite{RR} that every
indecomposable module (of finite or infinite dimension) has a slope.

\smallskip

For rational $w$, the classification of large tilting and cotilting
modules of slope $w$ is completely analogous to the domestic case.  In
contrast, for irrational $w$, there are just one tilting module
$\mathbf L_w$ and one cotilting module $\mathbf W_w$ of that slope.
Moreover, the modules of slope $w$ are precisely the pure-epimorphic
images of direct sums of copies of $\mathbf L_w$. Dually, they can be
described as the pure submodules of products of copies of
$\mathbf W_w$.  In particular, every indecomposable pure-injective
module of slope $w$ belongs to $\Prod \mathbf W_w$. Combining this
with results from \cite{H}, we obtain a classification of the
indecomposable pure-injective $\La$-modules in Theorem~\ref{Ziegler}.

\smallskip

Furthermore, in Theorem~\ref{limits}, we show that every tilting
module which does not belong to the leftmost part of $\Mla$ determined
by $\p_0\cup\tube_0$, nor to the rightmost part determined by
$\tube_\infty\cup\q_\infty$, has a slope. This yields a classification
of the tilting and cotilting modules in the ``central part'' of
$\Mla$.

\smallskip

Finally, let us turn to the category $\Qcoh\XX$ of quasi-coherent
sheaves over a noncommutative curve of genus zero $\mathbb X$. A
classification of the large tilting sheaves, with a self-contained
proof inside the hereditary category $\Qcoh\XX$, is given in
\cite{AK}. Here we illustrate the interplay between tilting sheaves
and tilting modules over a derived equivalent concealed canonical
algebra. More precisely, since by \cite[Lem.~7.10]{AK} every tilting
sheaf $\widehat{T}$ in $\Qcoh\XX$ is generated by a suitable tilting
bundle $T_{cc}$ in $\coh\XX$, we can regard $\Qcoh\XX$ as the heart of
a certain t-structure in the derived category of the algebra
$\Lambda=\End T_{cc}$ and $\widehat{T}$ as a tilting module in the
``central part'' of $\Mla$. This allows to recover the classification
of large tilting sheaves from \cite{AK}, see Theorem
\ref{classofsheaves}.  In particular, this shows that also the tilting
sheaves of rational slope can be described in terms of universal
localizations and Lukas tilting modules.

\medskip

The paper is organized as follows. After some preliminaries in
Sections \ref{prel} and \ref{prel2}, we discuss the construction of
tilting modules via universal localization in Section \ref{Uloc}.
Gabriel localizations of $\Qcoh\XX$ are treated in Section
\ref{Gabriel loc}. Section \ref{class results} is devoted to the
classification results mentioned above.

\medskip

{\bf Acknowledgements:} The authors would like to thank Helmut Lenzing
and Mike Prest for valuable discussions.  This research started while
the second named author was visiting the University of Verona with a
research grant of the Department of Computer Science. The first named
author is partially supported by Fondazione Cariparo, Progetto di
Eccellenza ASATA.

\bigskip

\section{Preliminaries}\label{prel}

Throughout this section, let $\Bcal$ be a Grothendieck category.
Given a class of objects $\Mcal\subset\Bcal$, we denote
$$\Mcal^o=\{B\in\B\mid\Hom_\A(M,B)=0 \text{ for all } M\in\Mcal\}$$ 
$$\Mcal^\perp=\{B\in\B\mid\Ext^1_\A(M,B)=0 \text{ for all } M\in\Mcal\}$$  
 The classes ${}^o\Mcal$ and ${}^\perp\Mcal$ are defined dually. 
If $\Mcal$ consists of a single object $M$, we write $M^o$, $M^\perp$, etc.

\medskip

\subsection{\sc Torsion pairs.} \label{torsion pairs}
Recall that a pair of classes  $(\Tcal,\Fcal)$ in $\B$ is a
\emph{torsion pair} if $\Tcal={}^o\Fcal$,
and $\Fcal=\Tcal^o$.
Every class $\Mcal$ of objects in $\B$ \emph{generates} a torsion pair
$(\Tcal,\Fcal)$ by setting $\Fcal=\Mcal^o$ and
$\Tcal={}^o(\Mcal^o)$. Similarly, the torsion pair \emph{cogenerated
  by} $\Mcal$ is given by $\Tcal={}^o\Mcal$ and $\Fcal=({}^o\Mcal)^o$.
We say that a torsion pair $(\Tcal,\Fcal)$ is \emph{split} if every
short exact sequence $0\to T\to M\to F\to 0$ with $T\in\Tcal$ and
$F\in \Fcal$ splits.  Finally, a torsion pair $(\Tcal,\Fcal)$ in $\B$
is {\em faithful} (or cotilting in the terminology of \cite{HRS,KST})
if the torsionfree class $\mathcal F$ generates the category
$\B$. Similarly, $(\mathcal T,\mathcal F)$ is {\it cofaithful} (or
tilting in the terminology of \cite{HRS,KST}) if the torsion class
$\mathcal T$ cogenerates $\B$.
   
\medskip

\subsection{\sc Gabriel localization.}\label{Gabrieloc}
Torsion pairs play an important role in connection with the
localization theory developed by Gabriel, see \cite{Ga,GL,P}.  First
of all, recall that a subcategory $\mathfrak S$ of $\Bcal$ is a {\em
  Serre subcategory} if for every exact sequence
$0\to A\to B\to C\to 0$ in $\Bcal$ we have $B\in\Serre$ if and only if
$A,C\in\Serre$. We can then form the quotient category $\Bcal/\Serre$
with the canonical quotient functor $q:\Bcal\to\Bcal/\Serre$.  A
torsion pair $(\Tcal,\Fcal)$ in $\B$ is said to be \emph{hereditary}
if the torsion class $\Tcal$ is closed under subobjects, or in other
words, $\Tcal$ is a Serre subcategory of $\Bcal$. If $\Bcal$ has
enough injectives, then $\Tcal$ is even a \emph{localizing}
subcategory of $\Bcal$, i.e. the quotient functor
$q:\Bcal\to\Bcal/\Tcal$ has a right adjoint.  More details will be
given in Theorem \ref{Gabriel}.

\medskip

\subsection{\sc Tilting and cotilting objects.}\label{tilting objects}
An object $V$ of $\B$ is {\em tilting} if the category $\Gen V$ of
$V$-generated objects equals $V^\perp$.
 Then $$(\Gen V,V^o)$$ is a cofaithful torsion pair in $\B$, and $\Gen V$ is called a {\em tilting class}.

As shown in \cite[2.1 and 2.2]{CF}, 
the equality $\Gen V=V^\perp$ is equivalent to the three conditions
\begin{enumerate}
\item[(T1)] pdim$V\le 1$, that is, the functor $\Ext^2_\B(V,-)=0$,
\item[(T2)] $\Ext^1_\B(V,V^{(\alpha)})=0$ for all cardinals $\alpha$,
\item[(T3)] an object $B\in\B$ is zero whenever it satisfies $\Hom_\B(V,B)=\Ext^1_\B(V,B)=0$.
\end{enumerate}

When $\B=\Mla$ for some ring $\Lambda$, condition (T3) can be rephrased as follows:
\begin{enumerate}
\item[(T3')] There is a short exact sequence
  $0\to \Lambda\to T_0\to T_1\to 0$ where $T_0,T_1\in\Add T$.
\end{enumerate}

\smallskip

{\em Cotilting objects} and {\em cotilting classes} in $\B$ are
defined dually. In particular, a module $W$ is cotilting if the
category $\Cogen W$ of $W$-cogenerated objects equals ${}^\perp W
$, or equivalently,  $C$ has the dual properties (C1)-(C3). Of course, the torsion pair
$$({}^oW,\Cogen W)$$ is then a faithful torsion pair in $\Mla$.  

\smallskip 

We will discuss classification of tilting and cotilting modules up to
equivalence. Hereby, we say that two tilting modules $T,T'$ are {\em
  equivalent} if they induce the same tilting class $\Gen T=\Gen T'$,
or equivalently, if they have the same additive closure
$\Add T=\Add T'$. Similarly, two cotilting modules $W,W'$ are {\em
  equivalent} if they induce the same cotilting class
$\Cogen W=\Cogen W'$, or equivalently, $\Prod W=\Prod W'$.

\smallskip

When $\Lambda$ is a left noetherian ring with a fixed duality $D$ (for
example $D=\Hom_\Z(-,\Q/\Z)$, or $D=\Hom_K(-,K)$ in case $\Lambda$ is
a finite dimensional algebra over a field K), tilting and cotilting
modules are related by the following result.

\begin{thm}[\cite{BH,APST}]\label{res} 
If $\Lambda$ is a left noetherian ring, there is a bijection between
\begin{enumerate} 
\item  equivalence  classes of  tilting modules in $\Mla$,
\item  equivalence  classes of  cotilting modules in $\LMla$,
\item  resolving subcategories of $\mla$ consisting of modules of projective dimension $\le 1$.
\end{enumerate}
The bijection above assigns to a tilting module $T$ the cotilting
module $D(T)$, and the resolving subcategory
$\Scal={}^\perp(\Gen T)\cap\mla$.  Conversely, a resolving subcategory
$\Scal$ is mapped to the tilting class $\Scal^\perp$ and the cotilting
class
$\Scal^\intercal= \{B\in\LMla\mid\Tor_1^\Lambda (S,B)=0 \text{ for all
} S\in\Scal\}$.\end{thm}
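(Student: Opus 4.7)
The plan is to factor the bijection through the classification of tilting (respectively cotilting) classes and then relate these to resolving subcategories via the Ext--Tor duality. First, equivalence classes of tilting modules in $\Mla$ correspond bijectively to tilting classes $\mathcal{T} = \Gen T = T^\perp$, since $T$ is recovered up to equivalence as the Ext-projective objects of $\mathcal{T}$; similarly for cotilting modules and cotilting classes. So the problem reduces to constructing mutually inverse bijections between tilting classes in $\Mla$, cotilting classes in $\LMla$, and resolving subcategories of $\mla$ of projective dimension $\le 1$.

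For the correspondence with resolving subcategories, the main input is the Bazzoni--Herbera theorem: every tilting class over an arbitrary ring is of the form $\Scal^\perp$ for a set $\Scal$ of modules of projective dimension $\le 1$, and in the left noetherian case one may take $\Scal \subset \mla$. Given $T$, I would then verify that $\Scal(T) := {}^\perp(\Gen T) \cap \mla$ is resolving of projective dimension $\le 1$: it contains $\Lambda$ by (T3'), is closed under summands and extensions by generalities on $^\perp$-classes, and is closed under kernels of surjections because $T^\perp$ is closed under epimorphic images; the bound $\projdim S \le 1$ for $S \in \Scal(T)$ follows from the existence of a Bazzoni--Herbera generator of $T^\perp$ inside $\mla$. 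The inverse assignment is $\Scal \mapsto \Scal^\perp$, which Bazzoni--Herbera identifies as a tilting class.

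For the bijection with cotilting classes, the duality $D$ is exact and sends coproducts to products, and satisfies the adjunction-derived formula
\[
\Ext^i_\Lambda(M, D(T))\;\cong\; D\Tor^\Lambda_i(T, M) \qquad \text{for all } M \in \LMla.
\]
From this, conditions (T1)--(T3) translate directly into the cotilting conditions (C1)--(C3) for $W := D(T)$: $\projdim T \le 1$ gives $\injdim W \le 1$; the vanishing $\Ext^1(T, T^{(\alpha)}) = 0$ translates via $(DT)^\alpha \cong D(T^{(\alpha)})$; and (T3') dualizes to an injective copresentation $0 \to DT_1 \to DT_0 \to D(\Lambda) \to 0$ of the injective cogenerator $D(\Lambda)$ by copies of $W$, yielding (C3). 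The cotilting class $\Cogen W = {}^\perp W$ is then identified as $\{B \in \LMla \mid \Tor_1^\Lambda(T, B) = 0\}$, which coincides with $\Scal^\intercal$ for $\Scal = \Scal(T)$: indeed, for $S \in \Scal$, $\Tor_1^\Lambda(S,B) = 0$ iff $\Ext^1(S, DB) = 0$, and running this over all $S \in \Scal$ is equivalent to $DB \in \Scal^\perp = T^\perp$, which by the formula is the same as $\Tor_1^\Lambda(T, B) = 0$.

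The hard part is the surjectivity of $T \mapsto [D(T)]$ on equivalence classes, i.e.\ showing that every cotilting module in $\LMla$ is equivalent to $D(T)$ for some tilting $T \in \Mla$. This is the content of [APST] and requires a cotilting analogue of Bazzoni--Herbera in the noetherian setting, ensuring that every cotilting class in $\LMla$ has the form $\Scal^\intercal$ for some resolving $\Scal \subset \mla$ of projective dimension $\le 1$; combined with the identity $\Scal^\intercal = \{B \mid D(B) \in \Scal^\perp\}$ derived above, this closes the cycle and yields the stated bijections.
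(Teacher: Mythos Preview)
The paper does not prove this theorem; it is quoted from \cite{BH,APST} as a black box and no argument is given. There is therefore nothing in the paper to compare your proposal against.

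That said, your sketch is a faithful outline of the strategy behind the cited references: Bazzoni--Herbera's finite-type theorem supplies the bijection between (1) and (3), and the Ext--Tor adjunction through the duality $D$ handles the passage to cotilting classes, with the surjectivity onto all cotilting classes in $\LMla$ being the genuine extra input from \cite{APST}. One small point: your justification that every $S\in\Scal(T)$ has projective dimension at most one is vaguer than necessary. A direct argument is that $T^\perp=\Gen T$ is closed under quotients, so for any module $M$ embedded in an injective $I$ one has $I/M\in T^\perp$; the long exact sequence then gives $\Ext^2_\Lambda(S,M)\hookrightarrow\Ext^1_\Lambda(S,I/M)=0$.
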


Hereby, a subcategory $\Scal\subset\mla$ (or of $\Mla$) is said to be
{\em resolving} if it is closed under direct summands, extensions, and
kernels of epimorphisms, and it contains $\Lambda$ (or all projective
modules, respectively).

\smallskip

(Co)tilting modules that are not equivalent to a finitely generated
(co)tilting module will be called {\it large}.

\medskip

\subsection{\sc The heart.}\label{heart}

Let $(\Qcal, \Ccal)$ be a torsion pair in $\Bcal$.  According to
\cite{HRS, KST}, the classes
$$\mathcal D^{\le 0}=\{X^\cdot\in\mathcal \Dercat(\Bcal)\mid
H^0(X^\cdot)\in\mathcal Q, H^i(X^\cdot)=0 \text{ for } i>0\},$$
$$\mathcal D^{\ge 0}=\{X^\cdot\in\mathcal \Dercat(\Bcal)\mid
H^{-1}(X^\cdot)\in\mathcal C, H^i(X^\cdot)=0 \text{ for } i<-1\}$$ 
form a t-structure $(\mathcal D^{\le 0},\mathcal D^{\ge 0})$ in the
derived category $\mathcal \Dercat(\Bcal)$, called the {\it
  t-structure induced by $(\mathcal Q,\mathcal C)$}. Its \emph{heart}
$$\mathcal A=\mathcal D^{\le 0}\cap\mathcal D^{\ge 0}$$   is always an
abelian category \cite{BBD}  whose exact structure is given by the
triangles of $\mathcal \Dercat(\Bcal)$. For any two objects $X,Z\in\A$
there are functorial isomorphisms $$\Ext^i_\A(X,Z)\cong\Hom_{\mathcal
  \Dercat(\Bcal)}(X,Z[i])\text{ for } i=0,1.$$  

Moreover, $(\mathcal C[1],\mathcal Q)$ is a torsion pair in $\A$ by
\cite[I.2.2]{HRS}.

\smallskip {\it From now on}, we assume that $(\mathcal Q,\mathcal C)$
is a {faithful torsion pair} and $\A$ is the heart of the
corresponding t-structure in $\mathcal \Dercat(\Bcal)$. Then there is
a triangle equivalence between $\mathcal \Dercat(\Bcal)$ and $\dba$,
see e.g.~\cite[3.12]{KST}. We record the following facts for later
reference.

\begin{lemma}\label{ExtHom}
  \textnormal{(1)} For every $X\in\A$ there are objects $Y\in\Ccal$
  and $Q\in\Qcal$ with a canonical sequence
  $$0\to Y[1]\to X\to Q\to 0.$$ \textnormal{(2)} The following
  statements hold true for $C,Y\in\Ccal$ and $Q\in\Qcal$.
\begin{enumerate}
\item[(a)] $\Hom_\A(Q,C[1])\cong\Ext_\B^1(Q,C)$,
\item[(b)] $\Ext^1_\A(Q,C[1])\cong\Ext_\B^2(Q,C)$,
\item[(c)] $\Hom_\A(Y[1],C[1])\cong\Hom_\B(Y,C)$,
\item[(d)] $\Ext^1_\A(Y[1],C[1])\cong\Ext^1_\B(Y,C)$,
\item[(e)] $\Ext^1_\A(Y[1],Q)\cong\Hom_\B(Y,Q)$.
\end{enumerate}
 \end{lemma}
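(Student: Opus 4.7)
The plan is straightforward, since both parts follow from the definition of the heart $\A$ combined with the general formula $\Ext^i_\A(X,Z)\cong\Hom_{\Dercat(\B)}(X,Z[i])$ for $i=0,1$ recorded just before the lemma.

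For Part (1), I would appeal to the torsion pair $(\Ccal[1],\Qcal)$ in $\A$ already noted in the preliminaries via \cite{HRS}. Given $X\in\A$, I set $Y=H^{-1}(X)\in\Ccal$ and $Q=H^0(X)\in\Qcal$; these are exactly the cohomological data forcing $X$ into $\mathcal D^{\le 0}\cap\mathcal D^{\ge 0}$. The standard truncation triangle
$$Y[1]\to X\to Q\to Y[2]$$
in $\Dercat(\B)$ has all three of its terms $Y[1],X,Q$ in $\A$, so it yields a short exact sequence in the heart. This sequence coincides with the torsion-torsionfree decomposition of $X$ with respect to $(\Ccal[1],\Qcal)$, and is therefore canonical.

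For Part (2), each of the five isomorphisms is a one-line computation obtained by combining the preliminary formula with the identifications $\Hom_{\Dercat(\B)}(U,V[n])=\Ext^n_\B(U,V)$ for $U,V\in\B$ and $n\ge 0$, together with the shift invariance $\Hom_{\Dercat(\B)}(U[k],V[k])=\Hom_{\Dercat(\B)}(U,V)$. For (a) and (b) I apply the formula to the pair $Q,C[1]\in\A$ directly, obtaining $\Hom_{\Dercat(\B)}(Q,C[1])=\Ext^1_\B(Q,C)$ and $\Hom_{\Dercat(\B)}(Q,C[2])=\Ext^2_\B(Q,C)$. For (c)--(e) I first pass from $\Ext^i_\A$ to $\Hom_{\Dercat(\B)}$ via the formula, then shift both arguments by $[-1]$ to land back in $\B$, so that each left-hand side becomes either a classical $\Hom_\B$ or a classical $\Ext^1_\B$.

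The main potential obstacle is conceptual rather than computational: I must ensure that the triangle in Part (1) is genuinely an exact sequence in the abelian heart, not merely a distinguished triangle in $\Dercat(\B)$. This is automatic from the standard principle that any distinguished triangle whose three consecutive terms lie in the heart of a t-structure induces a short exact sequence there, but it is the only step that requires a reference beyond purely formal manipulation.
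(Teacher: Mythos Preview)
Your proposal is correct and is precisely the routine verification the paper intends: the authors themselves write only ``is left to the reader,'' so your argument via the truncation triangle for (1) and the derived-category formula $\Ext^i_\A(X,Z)\cong\Hom_{\Dercat(\B)}(X,Z[i])$ for (2) is exactly the expected one.
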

\begin{proof} is left to the reader.\end{proof}

\smallskip

As shown in \cite[5.2]{KST}, the heart $\A$ is a \emph{hereditary}
abelian category, that is, $\Ext^2_\A(-,-)=0$, if and only if the
torsion pair $(\Qcal, \Ccal)$ is split and all objects in $\Ccal$ have
projective dimension at most one.  In this case, the (co)tilting
objects in $\A$ and $\B$ are closely related.
  
 \begin{lemma}\label{tilting in hearts}
Assume that $\A$ is hereditary.
\begin{enumerate}
\item[(1)] An object $C\in\Bcal$ of injective dimension at most one
  that belongs to $\Ccal$ satisfies conditions (C2) and (C3) in $\B$
  if and only if so does $C[1]$ in $\A$.
  
\item[(2)]  An object $T\in\Bcal$ that belongs to $\Ccal$ 
 satisfies conditions (T2) and (T3) in $\B$ if and only if so does $T[1]$  in $\A$. 
\end{enumerate}
  \end{lemma}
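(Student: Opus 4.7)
The plan is to translate the conditions (T2), (T3), (C2), (C3) between $\B$ and $\A$ using Lemma \ref{ExtHom} in a systematic way, relying on the characterization cited just before the lemma: the hereditarity of $\A$ means the torsion pair $(\Qcal,\Ccal)$ is split and every object of $\Ccal$ has projective dimension at most one in $\B$. A first observation is that the shift functor on $\Dercat(\B)$ commutes with products and coproducts, and that $\Ccal$ is closed under products (as every torsion-free class is) and, via splitness, also under coproducts in $\B$; consequently $(T[1])^{(\alpha)} = T^{(\alpha)}[1]$ and $(C[1])^\alpha = C^\alpha[1]$ inside $\A$, so the product/coproduct conditions in $\A$ translate unambiguously to their $\B$-counterparts.

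For part (2), condition (T2) is immediate from Lemma \ref{ExtHom}(d), since $\Ext^1_\A(T[1], T^{(\alpha)}[1]) \cong \Ext^1_\B(T, T^{(\alpha)})$. For (T3), I would take an arbitrary $X \in \A$, decompose it as $0 \to Y[1] \to X \to Q \to 0$ with $Y \in \Ccal$ and $Q \in \Qcal$ using Lemma \ref{ExtHom}(1), and apply $\Hom_\A(T[1], -)$. Using that $\A$ is hereditary to annihilate $\Ext^2_\A$, and the degree-reasons vanishing $\Hom_\A(T[1], Q) = \Hom_{\Dercat(\B)}(T, Q[-1]) = 0$, parts (c), (d), (e) of the lemma collapse the long exact sequence to $\Hom_\A(T[1], X) \cong \Hom_\B(T, Y)$ together with a short exact sequence $0 \to \Ext^1_\B(T, Y) \to \Ext^1_\A(T[1], X) \to \Hom_\B(T, Q) \to 0$. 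Assuming (T3) in $\B$, vanishing on the $\A$-side then forces $Y = 0$ directly and $Q = 0$ once splitness supplies the missing $\Ext^1_\B(T, Q) = 0$. Conversely, given (T3) in $\A$, I would decompose a test $B \in \B$ as $B = Q_B \oplus C_B$ using splitness and apply (T3) in $\A$ separately to $Q_B \in \Qcal \subset \A$ and $C_B[1] \in \A$, each time translating the hypotheses via Lemma \ref{ExtHom}(c)--(e).

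Part (1) runs dually. Condition (C2) follows from Lemma \ref{ExtHom}(d) applied to the product $C^\alpha[1]$. For (C3), I would again decompose $X \in \A$ as $0 \to Y[1] \to X \to Q \to 0$ and apply $\Hom_\A(-, C[1])$. Parts (a)--(d) of the lemma plus the hereditarity of $\A$ turn the long exact sequence into the data $\Ext^1_\B(Y, C) = \Ext^1_\B(Q, C) = 0$ together with $\Hom_\B(Y, C) \cong \Ext^2_\B(Q, C)$. Here the hypothesis $\injdim_\B C \le 1$ enters to kill $\Ext^2_\B(Q, C)$, so that $\Hom_\B(Y, C) = 0$; then (C3) in $\B$ gives $Y = 0$, and $Q = 0$ follows similarly since $\Hom_\B(Q, C) = 0$ is automatic between torsion and torsion-free. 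The converse decomposes a test $B \in \B$ via the torsion pair and applies (C3) in $\A$ to each piece, again using $\injdim_\B C \le 1$ to neutralize the $\Ext^2_\B$ term that emerges through Lemma \ref{ExtHom}(b).

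The chief obstacle I anticipate is bookkeeping rather than insight: aligning the isomorphisms of Lemma \ref{ExtHom} with the correct terms of the long exact sequences, and certifying that shifts of $\B$-products/coproducts genuinely compute the $\A$-products/coproducts. The conceptual asymmetry worth flagging is why (1) requires $\injdim_\B C \le 1$ while (2) needs no analogous hypothesis: the obstructing term $\Ext^2_\B(Q, C)$ appears through Lemma \ref{ExtHom}(b) when one computes $\Ext^1_\A(Q, C[1])$, whereas its tilting-side counterpart $\Hom_\A(T[1], Q)$ already vanishes for free by degree reasons.
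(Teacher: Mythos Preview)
Your proposal is correct and follows essentially the same approach as the paper's proof, which likewise uses Lemma~\ref{ExtHom} together with the canonical decomposition $0\to Y[1]\to X\to Q\to 0$ to translate (C2), (C3), (T2), (T3) back and forth; the paper writes out part~(1) in detail and declares part~(2) dual. One small correction: closure of $\Ccal$ under coproducts does not rely on splitness---in a Grothendieck category the canonical map $\bigoplus C_i\to\prod C_i$ is a monomorphism, and since $\Ccal$ is closed under products and subobjects, $\bigoplus C_i\in\Ccal$ follows directly.
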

\begin{proof}
  It follows from Lemma~\ref{ExtHom}(d) that $C$ satisfies condition
  (C2) in $\Bcal$ if and only if so does $C[1]$ in $\A$, and similarly
  $T$ satisfies condition (T2) in $\Bcal$ if and only if so does
  $T[1]$ in $\A$.

  Assume now that $C$ satisfies (C3), and let $X\in\A$ be an object
  satisfying $\Hom_\A(X,C[1])=\Ext^1_\A(X,C[1])=0$. From the canonical
  sequence $$0\to Y[1]\to X\to Q\to 0$$ with $Y\in\Ccal$ and
  $Q\in\Qcal$ we obtain a long exact sequence
  $$0\to \Hom_\A(Q,C[1])\to 0\to \Hom_\A(Y[1],C[1])\to
  \Ext^1_\A(Q,C[1])\to 0\to \Ext^1_\A(Y[1],C[1])\to 0.$$
  We infer from Lemma~\ref{ExtHom}(a) that $\Ext^1_\B(Q,C)=0$, and as
  $\Hom_\B(Q,C)=0$ by assumption, we conclude $Q=0$. Moreover, it
  follows from Lemma~\ref{ExtHom}(b) and (c) that
  $\Hom_\B(Y,C)\cong \Ext^1_\A(Q,C[1])\cong\Ext^2_\B(Q,C)=0$ as $C$
  has injective dimension at most one. Since we also have
  $\Ext^1_\B(Y,C)=0$ by Lemma~\ref{ExtHom}(d), we conclude $Y=0$, and
  so $X=0$. This shows that $C[1]$ satisfies condition (C3).

  Conversely, assume that $C[1]$ satisfies condition (C3), and let
  $B\in\mathcal B$ be an object satisfying
  $\Hom_{\mathcal B}(B,C)=\Ext^1_{\mathcal B}(B,C)=0$. Then
  $B=Y\oplus Q$ with $Y\in\Ccal$ and $Q\in\mathcal Q$. By
  Lemma~\ref{ExtHom}(c) and (d) it follows that
  $\Hom_\A(Y[1],C[1])=\Ext^1_\A(Y[1],C[1])=0$, hence $Y=0$. Further,
  again from Lemma~\ref{ExtHom}(b) we obtain
  $\Ext^1_\A(Q,C[1])=0$. Since $\Hom_\A(Q,C[1])=0$ by
  Lemma~\ref{ExtHom}(a), we conclude $Q=0$ and $B=0$. So $C$ satisfies
  (C3) as well.

The proof of statement (2) is dual. 
\end{proof}

\medskip

\subsection{\sc Hearts induced by  cotilting modules.}\label{hearts of cotilting}
Assume now $\mathcal B=\Mla$ for some ring $\Lambda$.
Then, 
as shown in \cite[Sections 3 and 4]{CF}, the object
$V=\Lambda[1]\in\Acal$ is a tilting object with
$\End_\A V\cong\Lambda$,
defining crosswise equivalences
$$H_V=\Hom_\Acal(V,-):\Ccal[1]=\Gen V\to \Ccal,\quad
H_V'=\Ext^1_\Acal(V,-):\mathcal Q=V^o\to\mathcal Q$$
between the torsion class in $\Acal$ and the torsionfree class in
$\Mla$, and between the torsion-free class in $\A$ and the torsion
class in $\Mla$, respectively. In other words,
${\mathbb R} \Hom_\A(V,-)$ yields an equivalence $\dba\to\Db$.

\begin{prop}\label{shape}
Let $(\Qcal, \Ccal)$ be a faithful 
torsion pair in $\Mla$ with hereditary heart $\A$.
\begin{enumerate}
\item[(1)] If $C\in\Ccal$ is a cotilting $\Lambda$-module, then
  $\Cogen C[1]={}^\perp C[1]$ is a torsionfree class in $\A$
  consisting of the objects in $X\in\A$ for which $H_V(X)\in\Cogen
  C$.
  In particular, $\mathcal Q$ is always contained in $\Cogen C[1]$.

\item[(2)] If $T\in\Ccal$ is a tilting $\Lambda$-module, then
  $\Gen T[1]=T[1]^\perp$ is a torsion class in $\A$ consisting of the
  objects in $X\in\A$ for which $H_V(X)\in\Gen T$ and
  $H_V'(X)\in T^o$. \end{enumerate}
\end{prop}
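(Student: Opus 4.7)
The plan is to lift $C$ and $T$ to (co)tilting objects in $\A$ via Lemma~\ref{tilting in hearts}, and then reduce the Ext-vanishing that defines $\Cogen C[1]$, respectively $\Gen T[1]$, to conditions on the two constituents of $X$ in the canonical sequence $0\to Y[1]\to X\to Q\to 0$ supplied by Lemma~\ref{ExtHom}(1).

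First, since $\A$ is hereditary, condition (C1) (respectively (T1)) holds automatically there. Combined with Lemma~\ref{tilting in hearts}, this shows that $C[1]$ is a cotilting object of $\A$, so $\Cogen C[1]={}^\perp C[1]$ is a torsionfree class, and that $T[1]$ is a tilting object of $\A$, so $\Gen T[1]=T[1]^\perp$ is a torsion class. Next, applying $\Hom_\A(V,-)$ to the canonical sequence of an arbitrary $X\in\A$ and using that $V$ is tilting in $\A$ with torsion pair $(\Ccal[1],\Qcal)$ (so that $\Hom_\A(V,Q)=0=\Ext^1_\A(V,Y[1])$), one obtains via Lemma~\ref{ExtHom}(c) and (e) the identifications $H_V(X)\cong Y$ and $H_V'(X)\cong Q$, which allow the translation between $\A$ and $\Mla$.

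For part (1) I would apply $\Hom_\A(-,C[1])$ to the canonical sequence. Lemma~\ref{ExtHom}(b) together with $\injdim C\le 1$ gives $\Ext^1_\A(Q,C[1])\cong \Ext^2_\B(Q,C)=0$, while Lemma~\ref{ExtHom}(d) gives $\Ext^1_\A(Y[1],C[1])\cong\Ext^1_\B(Y,C)$. The long exact sequence therefore collapses to an isomorphism $\Ext^1_\A(X,C[1])\cong\Ext^1_\B(Y,C)$, and since $C$ is cotilting in $\Mla$ we have $\Cogen C={}^\perp C$, so $X\in{}^\perp C[1]$ if and only if $Y\in\Cogen C$, that is, $H_V(X)\in\Cogen C$. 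The ``in particular'' clause is the special case $X=Q\in\Qcal$, where $Y=0$.

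Part (2) is analogous. Applying $\Hom_\A(T[1],-)$ to the canonical sequence, the term $\Hom_\A(T[1],Q)$ vanishes because $T[1]\in\Ccal[1]$ and $Q\in\Qcal$ lie on opposite sides of the torsion pair in $\A$, while Lemma~\ref{ExtHom}(d) and (e) yield $\Ext^1_\A(T[1],Y[1])\cong\Ext^1_\B(T,Y)$ and $\Ext^1_\A(T[1],Q)\cong\Hom_\B(T,Q)$. The resulting short exact sequence
\begin{equation*}
0\to\Ext^1_\B(T,Y)\to\Ext^1_\A(T[1],X)\to\Hom_\B(T,Q)\to 0
\end{equation*}
shows that $X\in T[1]^\perp$ if and only if $Y\in T^\perp=\Gen T$ and $Q\in T^o$, which via $H_V(X)\cong Y$ and $H_V'(X)\cong Q$ is the claimed characterization. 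There is no substantive obstacle; the only point requiring care is to invoke the right vanishings ($\Ext^2_\B(Q,C)=0$ in (1), $\Hom_\A(T[1],Q)=0$ in (2)) at the right places.
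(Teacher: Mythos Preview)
Your proof is correct and follows essentially the same route as the paper's: both lift $C$ and $T$ to (co)tilting objects in $\A$ via Lemma~\ref{tilting in hearts} (using that (C1)/(T1) are automatic in the hereditary heart), then analyze the relevant $\Ext^1_\A$ along the canonical sequence $0\to Y[1]\to X\to Q\to 0$ using the same vanishings $\Ext^1_\A(Q,C[1])\cong\Ext^2_\B(Q,C)=0$ and $\Hom_\A(T[1],Q)=0$, and finally identify $Y=H_V(X)$, $Q=H_V'(X)$. The only cosmetic difference is that the paper cites \cite[2.1]{CF} explicitly for the implication from (C1)--(C3) to $\Cogen C[1]={}^\perp C[1]$ in $\A$, whereas you invoke the general equivalence stated in Section~\ref{tilting objects}; your added computation verifying $H_V(X)\cong Y$ and $H_V'(X)\cong Q$ via $\Hom_\A(V,-)$ is a nice explicit touch that the paper leaves implicit.
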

\begin{proof}
Since $\A$ is hereditary,  conditions (C1) and (T1) are always satisfied in $\A$. 

(1) We know from Lemma \ref{tilting in hearts} that $C[1]$ satisfies
(C1)-(C3). Dualizing the proof of \cite[2.1]{CF}, we infer that
$\Cogen\, C[1]={}^\perp C[1]$ is a torsionfree class in $\A$. Let
$X\in\A$. Taking again the canonical sequence
$0\to Y[1]\to X\to Q\to 0$ with $Y\in\Ccal$ and $Q\in\Qcal$ and
recalling that $\Ext^1_\A(Q,C[1])\cong \Ext^2_{\Lambda}(Q,C)=0$ and
$\Ext^1_\A(Y[1],C[1])\cong\Ext^1_{\Lambda}(Y,C)$ by
Lemma~\ref{ExtHom}(b) and (d), we see that $X\in\Cogen\,C[1]$ if and
only if $Y\in\Cogen C$. The claim now follows from the fact that
$Y=H_V(X)$ and $H_V(Q)=0$ for all $Q\in\mathcal Q$.

(2) Again we infer from Lemma \ref{tilting in hearts} and the proof of
\cite[2.1]{CF} that $\Gen\, T[1]=T[1]^\perp$ is a torsion class in
$\A$. Applying $\Hom_\A(T[1],-)$ to the canonical sequence
$0\to Y[1]\to X\to Q\to 0$ with $Y\in\Ccal$ and $Q\in\Qcal$, we obtain
a long exact sequence
$0= \Hom_\A(T[1],Q)\to \Ext^1_\A(T[1],Y[1])\to \Ext^1_\A(T[1],X)\to
\Ext^1_\A(T[1],Q)\to 0$.
By Lemma~\ref{ExtHom}(d) and (e), we see that $\Ext^1_\A(T[1],X)=0$ if
and only if $Y\in\Gen T$ and $\Hom_{\Lambda}(T,Q)=0$. The claim now
follows from the fact that $Y=H_V(X)$ and $Q=H_V'(X)$.
\end{proof}

\smallskip
 
 We will be particularly interested in the case when 
 $\A$ is a \emph{Grothendieck category}. It was shown in \cite{CGM}
 that this happens if and only if there is a cotilting module $W$ such
 that $\mathcal C=\Cogen W$ (and $\mathcal Q={}^o W$). Then $W[1]$ is
 an injective cogenerator of $\A$.  In some cases, $\A$ has also the
 following geometric interpretation.

\begin{prop}\label{sheavesashearts}
  Let $\Lambda$ be a connected artin algebra, and let
  $(\mathcal Q,\mathcal C)$ be a torsion pair in $\Mla$. Suppose that
\begin{enumerate}
\item[(i)] there is a $\Sigma$-pure-injective cotilting
  $\Lambda$-module $W$ such that $\mathcal C=\Cogen W$,
\item[(ii)] the torsion pair $(\mathcal Q,\mathcal C)$ splits,
\item[(iii)] $\Lambda\in\mathcal C$ and $D(_\Lambda\Lambda)\in\mathcal Q$, 
\item[(iv)] $\mathcal Q\cap{}^\perp\mathcal Q=0$, 
\item[(v)]  all modules in $\Ccal$ have projective dimension at most one.
\end{enumerate}
Then the heart $\mathcal A$ of the corresponding t-structure in $\Db$
is equivalent to the category $\Qcoh\XX$ of quasi-coherent sheaves
over a noncommutative curve of genus zero $\mathbb X$.  Hereby the
category fp$\Acal$ of finitely presented objects corresponds to the
category $\coh\XX$ of coherent sheaves.
\end{prop}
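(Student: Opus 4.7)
My strategy is to check that $\mathcal{A}$ is a hereditary Grothendieck category equipped with a tilting object that plays the role of a tilting bundle with suitable endomorphism ring, and then to invoke an axiomatic characterization of $\Qcoh\XX$ to identify $\mathcal{A}$ with $\Qcoh\XX$.

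First I would verify the structural properties of $\mathcal{A}$. Combining (v) with the splitting assumption (ii), the criterion \cite[5.2]{KST} recalled above yields that $\mathcal{A}$ is hereditary. By (i) and \cite{CGM}, $\mathcal{A}$ is moreover a Grothendieck category with $W[1]$ as injective cogenerator. The first clause of (iii), $\Lambda\in\mathcal{C}$, forces $\mathcal{C}$ to contain a progenerator of $\Mla$ and hence to generate $\Mla$, so the torsion pair $(\mathcal{Q},\mathcal{C})$ is faithful. The setup of subsection \ref{hearts of cotilting} therefore applies: $V=\Lambda[1]$ is a tilting object of $\mathcal{A}$ with $\End_{\mathcal{A}}V\cong\Lambda$, giving a triangle equivalence $\dba\simeq\Db$.

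Next I would promote $V$ to a genuine tilting bundle by showing $V\in\fpA$ and identifying $\Lambda$ as concealed canonical. The $\Sigma$-pure-injectivity of $W$ in (i) makes $\mathcal{C}=\Cogen W$ a definable subcategory, which by standard arguments forces $\mathcal{A}$ to be locally coherent. The second clause of (iii), $D({}_\Lambda\Lambda)\in\mathcal{Q}$, places the injective envelope of $\Lambda$ on the torsion side of $\mathcal{A}$; combined with the orthogonality condition (iv), this rules out ``large'' phenomena in $V$ and yields $V\in\fpA$. Since $\Lambda=\End V$ is a connected artin algebra realised as the endomorphism ring of a tilting bundle in a hereditary, locally noetherian Grothendieck category, Lenzing's characterization of tilting bundles of canonical type then forces $\Lambda$ to be concealed canonical.

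Finally, once $\mathcal{A}$ is seen to be a locally coherent hereditary Grothendieck category carrying a tilting bundle whose endomorphism ring is concealed canonical, I would invoke the characterization of noncommutative curves of genus zero (from Kussin's monograph, and the framework underlying \cite{AK}) to identify $\mathcal{A}\simeq\Qcoh\XX$ for a unique $\XX$, with $\fpA$ corresponding to $\coh\XX$. The main obstacle is the third paragraph: translating the purely module-theoretic hypotheses (i)--(v), which only describe a torsion pair in $\Mla$, into the structural conclusions (local noetherianness of $\mathcal{A}$, coherence of $V$, and concealed canonical shape of $\Lambda$) that allow the characterization theorem to apply. The key leverage will be the interplay between $\Sigma$-pure-injectivity of $W$, the orthogonality (iv), and the placement of $\Lambda$ and $D({}_\Lambda\Lambda)$ on opposite sides of the torsion pair.
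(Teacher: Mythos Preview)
Your first paragraph is correct and matches the paper's argument: heredity of $\mathcal{A}$ from (ii) and (v) via \cite[5.2]{KST}, the Grothendieck property from (i) via \cite{CGM}, faithfulness from $\Lambda\in\mathcal{C}$, and $V=\Lambda[1]$ a tilting object with $\End_{\mathcal{A}}V\cong\Lambda$.

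The gap is in your second and third paragraphs, where you misattribute the role of (iii) and (iv) and misidentify the target of the characterization theorem. The paper does not prove that $\Lambda$ is concealed canonical as an intermediate step; instead it verifies directly that $\mathcal{H}=\text{fp}\,\mathcal{A}$ satisfies Lenzing's axioms \cite[2.5]{LMik} for $\coh\XX$: a connected, skeletally small, noetherian, hereditary $k$-category with finite-dimensional Hom and Ext spaces, a tilting object, and \emph{no nonzero projective objects}. This last axiom is the crux, and it is precisely where (iv) and the second half of (iii) enter. If $A\in\mathcal{H}$ is projective with canonical sequence $0\to Y[1]\to A\to Q\to 0$, then $Y[1]$ is projective by heredity, and $\Ext^1_{\mathcal{H}}(Y[1],Q')\cong\Hom_\Lambda(Y,Q')=0$ for all $Q'\in\mathcal{Q}$ forces $Y=0$ since $D(_\Lambda\Lambda)\in\mathcal{Q}$; so $A\cong Q\in\mathcal{Q}\cap\mla$. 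Now (iv) supplies a non-split short exact sequence $0\to Q'\to B\to Q\to 0$ in $\Mla$ with all terms in $\mathcal{Q}$, and a cone computation in the t-structure shows this sequence is also exact in $\mathcal{A}$, contradicting projectivity of $Q$. Once the axioms hold, $\mathcal{H}\cong\coh\XX$ and then $\mathcal{A}\cong\Qcoh\XX$.

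Your proposed use of (iv) to ``rule out large phenomena in $V$'' and force $V\in\text{fp}\,\mathcal{A}$ is misplaced and unnecessary: the $\Sigma$-pure-injectivity of $W$ makes $\mathcal{A}$ locally \emph{noetherian} by \cite{CMT} (not merely locally coherent), so $\text{fp}\,\mathcal{A}$ consists of extensions of $Y[1]$ with $Y\in\mathcal{C}\cap\mla$ by $Q\in\mathcal{Q}\cap\mla$, and $V=\Lambda[1]$ lies there simply because $\Lambda\in\mathcal{C}\cap\mla$. That $\Lambda$ is concealed canonical is then a \emph{consequence} of $\text{fp}\,\mathcal{A}\cong\coh\XX$ (via \cite{LP}), not an input.
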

\begin{proof}
  First of all, the $\Sigma$-pure-injectivity of $W$ yields by
  \cite{CMT} that $\mathcal A$ is a locally noetherian Grothendieck
  category.  Moreover, since the torsion pair
  $(\mathcal Q,\mathcal C)$ splits and all modules in $\Ccal$ have
  projective dimension at most one, it follows from \cite[5.2]{KST}
  that $\mathcal A$ is a hereditary category, that is,
  $\Ext^2_\A(-,-)=0$.

  Set $\mathcal H={\rm fp}\A$. Then $\mathcal H$ is a noetherian
  hereditary category with tilting object $V=\Lambda[1]$. The objects
  of $\mathcal H$ are extensions of objects of the form $Y[1]$ with
  $Y\in\mathcal C\cap\mla$ by objects $Q\in\mathcal Q\cap\mla$. It is
  then clear that $\mathcal H$ is a connected, skeletally small
  abelian $k$-category for which all morphism and extension spaces are
  finite dimensional $k$-vector spaces.
 
  We claim that $\mathcal H$ has no non-zero projective
  objects. Assume that $A\in\mathcal H$ is a non-zero projective
  object with canonical exact sequence $0\to Y[1]\to A\to Q\to
  0$.
  Then also $Y[1]$ is projective because $\mathcal H$ is
  hereditary. Since $\mathcal Q$ contains an injective cogenerator of
  $\Mla$, the condition
  $\Ext^1_{\mathcal H}(Y[1],Q')\cong\Hom_\Lambda(Y,Q')=0$ for all
  $Q'\in\mathcal Q$ implies that $Y[1]=0$ and $A\cong Q\in\mathcal Q$.
  By condition (iv) there is a non-split short exact sequence
  $0\to Q'\to B\stackrel{g}{\to} Q\to 0$ in $\Mla$ with all terms in
  $\mathcal Q$.  We show that the sequence is also exact in the heart
  $\A$.  To this end, we apply \cite[pp.281]{GM} to compute the kernel
  and cokernel of $g:B\to Q$ viewed as a morphism in $\A$: if $Z$ is
  the cone of $g$ in $\Db$ and
  $K=\tau_{\le-1}Z\to Z\to \tau_{\ge 0}Z\to K[1]$ is the canonical
  triangle where $\tau_{\le-1}Z\in\mathcal D^{\le -1}$ and
  $\tau_{\ge 0}Z\in\mathcal D^{\ge 0},$ then $\Ker_\A (g)=K[-1],$ and
  $\Coker_\A (g)=\tau_{\ge 0}Z$.  But $Z$ has homologies
  $Q'\in\mathcal Q$ in degree -1 and 0 elsewhere, thus $Z\cong K$, and
  $g$ is an epimorphism in $\A$ with kernel $Q'$.  So we have a
  non-split exact sequence in $\A$ ending at the projective object
  $Q$, a contradiction.

  By the axiomatic description of $\coh\XX$ given in \cite[2.5]{LMik},
  we now conclude that $\mathcal H=\coh\XX$ for a noncommutative curve
  of genus zero $\mathbb X$. Hereby $\mathbb X$ is obtained as an
  index set when decomposing the category $\mathcal H_0$ of
  indecomposable finite length objects of $\mathcal H$ into a family
  of connected uniserial length categories
  $\mathcal H_0=\bigcup_{x\in\mathbb X} \mathcal U_x$. Finally
  $\mathcal A=\Qcoh\XX$, cf. \cite[Ch. VI]{Ga} or \cite[5.4]{CB}.
 \end{proof}
 
 We  can now improve Proposition \ref{shape} as follows.
 
 \begin{cor}\label{shape2}
   Under the assumptions of Proposition \ref{sheavesashearts} above, a
   $\Lambda$-module $T\in\Ccal$ is a tilting module if and only if
   $T[1]$ is a tilting object in $\A$. In this case,
   $\Gen T[1]=\{X\in\A\,\mid\, H_V(X)\in\Gen T \text{ and }
   H_V'(X)=0\}$.
   In particular, $\Gen T[1]$ is contained in $\mathcal C[1]$.
\end{cor}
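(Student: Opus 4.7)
The plan is to bootstrap from Proposition~\ref{shape}(2) and Lemma~\ref{tilting in hearts}(2), exploiting the hypotheses of Proposition~\ref{sheavesashearts}. First I would verify that condition (T1) holds automatically on both sides of the equivalence: on the heart side, $\A$ is hereditary by Proposition~\ref{sheavesashearts}, so $\Ext^2_\A(-,-)=0$; on the module side, assumption (v) forces $\pd T \le 1$ since $T \in \mathcal C$. Conditions (T2) and (T3) then transfer between $T$ and $T[1]$ via Lemma~\ref{tilting in hearts}(2), which yields the claimed equivalence that $T$ is tilting in $\Mla$ if and only if $T[1]$ is tilting in $\A$.

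For the description of $\Gen T[1]$, the starting point is Proposition~\ref{shape}(2), which identifies $\Gen T[1]$ with the class of objects $X \in \A$ satisfying $H_V(X) \in \Gen T$ and $H_V'(X) \in T^o$. It thus suffices to upgrade the condition $H_V'(X) \in T^o$ to $H_V'(X) = 0$. Setting $Q = H_V'(X) \in \mathcal Q$, the splitting of the torsion pair $(\mathcal Q,\mathcal C)$ (assumption (ii)) yields $\Ext^1_\Lambda(C,Q) = 0$ for every $C \in \mathcal C$, and in particular $\Ext^1_\Lambda(T,Q) = 0$. Combined with $\Hom_\Lambda(T,Q) = 0$, condition (T3) for the tilting module $T$ then forces $Q = 0$, as desired.

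The final inclusion $\Gen T[1] \subseteq \mathcal C[1]$ would be immediate from this description: by Lemma~\ref{ExtHom}(1), every $X \in \A$ fits into a canonical sequence $0 \to Y[1] \to X \to Q \to 0$ with $Y = H_V(X) \in \mathcal C$ and $Q = H_V'(X) \in \mathcal Q$, so $H_V'(X) = 0$ collapses the sequence to $X \cong Y[1] \in \mathcal C[1]$. The whole argument is essentially bookkeeping; the only non-routine step, and the place where the extra hypotheses of Proposition~\ref{sheavesashearts} pay off, is the interaction of (T3) with the splitting of $(\mathcal Q,\mathcal C)$ that kills the $\mathcal Q$-part of any object in $\Gen T[1]$.
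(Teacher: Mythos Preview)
Your proposal is correct and follows essentially the same route as the paper: both verify (T1) on each side using assumption (v) and the hereditariness of $\A$, invoke Lemma~\ref{tilting in hearts}(2) for (T2)--(T3), and then refine Proposition~\ref{shape}(2) by using the splitting of $(\mathcal Q,\mathcal C)$ (i.e.\ $\mathcal C\subset{}^\perp\mathcal Q$) together with (T3) to force $H_V'(X)=0$. Your final paragraph spelling out why $H_V'(X)=0$ gives $X\in\mathcal C[1]$ via the canonical sequence is slightly more explicit than the paper, but the argument is the same.
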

\begin{proof}
  Observe first that $T\in\C$ and $T[1]\in\A$ have projective
  dimension at most one. Moreover, both $\Mla$ and $\A$ have enough
  injectives. The first statement then follows immediately from
  Proposition \ref{tilting in hearts}.

  By Proposition \ref{shape}, $\Gen T[1]$ consists of the $X\in\A$ for
  which $H_V(X)\in\Gen T$ and $H_V'(X)\in T^o$, that is,
  $\Hom_\Lambda(T,H_V'(X))=0$.  But $H_V'(X)$ is a module from
  $\mathcal Q$, and since $T\in\mathcal C\subset {}^\perp\mathcal Q$
  by assumption (ii), we always have
  $\Ext^1_{\Lambda}(T,H_V'(X))=0$. By condition (T3) for the tilting
  module $T$ we get that $H_V'(X)=0$ whenever $X\in\Gen T[1]$. So
  $\Gen T[1]\subset\mathcal C[1]$.
\end{proof}

\bigskip

\section{Concealed canonical algebras}\label{prel2}

\subsection{\sc The setup.}\label{setup}
{\it From now on} $\Lambda$ denotes a finite dimensional, connected,
concealed canonical algebra over a field $k$, for example a tame
hereditary algebra, or a canonical algebra. By \cite{LP} concealed
canonical algebras are precisely the finite dimensional algebras with
a sincere stable separating tubular family
$\tube=\bigcup_{x\in\mathbb X} \Ucal_x$ yielding a canonical
trisection $$(\p,\tube,\q)$$ of the category $\mla$.

More precisely, $\tube$ is a family of standard tubes $\Ucal_x$ in the
Auslander-Reiten quiver of $\La$ which is

- {\it sincere}: every simple module occurs as the composition factor
of at least one module from $\tube$;

- {\it stable}: it does not contain indecomposable projective or injective modules;

- \emph{separating}: the indecomposable modules in $\mla$ that do not
belong to $\tube$ fall into two classes $\p$ and $\q$ such that
$\Hom(\q,\p)=\Hom(\q,\tube)=\Hom(\tube,\p)=0$, and any homomorphism
from a module in $\p$ to a module in $\q$ factors through any
$\Ucal_x$.

\smallskip
The modules in $\add\tube$ form an exact abelian subcategory of $\mla$
in which all objects have finite length. The simple objects and the
composition factors in this category will be called {\it simple
  regular} modules and {\it regular composition factors}. The set of
all simple regular modules in a tube $\Ucal_x$ is called the {\it
  clique} of $\Ucal_x$. The order of the clique is the {\it rank} of
$\Ucal_x$. Notice that almost all tubes are {\it homogeneous}, i.e.~of
rank one.

Every simple regular module $S=S_1\in\Ucal_x$ determines a {\it ray}
$\{S_n\mid n\in\N\}$ of $\Ucal_x$, where $S_n$ denotes the
indecomposable object of regular length $n$ with regular socle
$S$. The direct limit of the modules on a ray
$S_\infty=\varinjlim S_n$ is called {\it Pr\"ufer module}, the {\it
  adic module} $S_{-\infty}$ is defined dually. Both are
indecomposable, infinite dimensional, pure-injective modules.  By
abuse of terminology, we say that $S_\infty$, or $S_{-\infty}$, is a
{Pr\"ufer module}, respectively an adic module, {\it from the tube}
$\Ucal_x$.

Given a tube $\Ucal_x$ of rank $r>1$ and a module $S_m\in\Ucal_x$ of
regular length $m<r$, we consider the full subquiver
$\mathcal W_{S_m}$ of $\Ucal_x$ which is isomorphic to the
Auslander-Reiten-quiver $\Theta(m)$ of the linearly oriented quiver of
type $\mathbb A_m$ with $S_m$ corresponding to the
projective-injective vertex of $\Theta(m)$. The set $\mathcal W_{S_m}$
is called a \emph{wing} of $\Ucal_x$ of size $m$ with {\it vertex}
$S_m$.

\smallskip

It is shown in \cite[3.1 and \S 10]{RR} that the class $\q$ generates
a split torsion pair $$(\Gen\q,\mathcal C)$$ in $\Mla$ and that
$\mathcal C=\Cogen \mathbf{W}$ for a cotilting module $\mathbf{W}$
which is the direct sum of of all Pr\"ufer modules $S_\infty$, where
$S$ runs through the isoclasses of all simple regular
$\Lambda$-modules in $\tube$, and an indecomposable infinite
dimensional module $G$ which has finite length over its endomorphism
ring and is called the {\it generic} module. Note that in the tame
hereditary case $\Gen\q=\Add\q$ and $\mathcal C$ is the largest
cotilting class in \Mla\ which is induced by a large cotilting module
(cf.\cite[\S 2]{AS2}).

\medskip

We consider the t-structure induced by the torsion pair
$(\Gen\q,\mathcal C)$ in \Db, and denote its heart by $\mathcal A$. We
claim that $\A$ is equivalent to the category $\Qcoh\XX$ of
quasi-coherent sheaves over $\mathbb X$.

\medskip

Indeed, $\mathbf W$ is a $\Sigma$-pure-injective cotilting module, and
we infer as above that $\mathcal A$ is a hereditary locally noetherian
Grothendieck category with injective cogenerator $\mathbf{W}[1]$, see
also \cite[11.1]{RR}.  The indecomposable injective objects in
$\mathcal A$ are $G[1]$ and the objects $S_\infty[1]$ where $S$ runs
through the isoclasses of all simple regular $\Lambda$-modules. Notice
that $S_\infty[1]$ is a uniserial object with socle $S[1]$, and
$\mathcal H_0=\tube[1]$ is the category of indecomposable finite
length objects in $\mathcal A$.

\medskip
Of course $\Lambda\in\mathcal C={}^\perp \mathbf W$, and
$D(\Lambda_\Lambda)\in\mathcal \Gen\q$ since $\tube$ is
stable. Further, all modules in $\Ccal$ have projective dimension at
most one by \cite[5.4]{RR}.  Finally,
${}^\perp(\Gen\q)\subset{}^\perp\q=\Ccal$, hence
${}^\perp(\Gen\q)\cap\Gen \q=0$.  So conditions (i) - (v) in
Proposition \ref{sheavesashearts} are satisfied, and we deduce that
$\mathcal A$ is equivalent to the category of quasi-coherent sheaves
over a noncommutative curve of genus zero, which coincides with
$\mathbb X$ because
$\mathcal H_0=\tube[1]=\bigcup_{x\in\mathbb X} \Ucal_x[1]$.

\medskip

The indecomposable finitely presented objects of infinite length in
$\A$ form the class $\vect\mathbb X=\q\cup\p[1]$ of indecomposable
{\it vector bundles}. Notice that $\tube$ generates the torsion pair
$(\Gen\tube, \varinjlim\p)$ in $\Mla$ with torsion-free class
$\varinjlim\p=\Cogen G$ by \cite[3.5 and 6.6]{RR}, and $\mathcal H_0$
generates a torsion pair
$(\varinjlim\tube[1],\varinjlim\vect\mathbb X)$ in $\A$. We call a
module or a sheaf {\it torsion}, respectively {\it torsion-free}, if
it is torsion, respectively torsion-free, with respect to these
torsion pairs.  Finally, $S_\infty[1]$, $G[1]$, $S_{-\infty}[1]$ are
called {\it Pr\"ufer, generic, adic sheaves}, and {\it wings} in the
tubular family $\mathcal H_0$ are defined in analogous way as above.

\medskip

\subsection{\sc Representation type.} According to \cite[Theorem
7.1]{LP}, a numerical invariant called genus determines the
representation type of the algebra $\La$, which can be {\it domestic,
  tubular} or {\it wild}. In the domestic case, $\Lambda$ is {\it tame
  concealed}, i.~e.~it can be realized as endomorphism ring of a
preprojective or preinjective tilting module over a finite dimensional
tame hereditary algebra $\Lambda'$.  The tubular case will be
discussed in more detail in Section \ref{class results}.

\subsection{\sc The Auslander-Reiten formula}\label{ARF}
Denote by $(\Dcal,\Rcal)$ the torsion pair cogenerated by $\tube$. As
shown in \cite{RR}, it is a split torsion pair, and the module
$\mathbf W$ considered in \ref{setup} is a tilting module whose
tilting class is the class of {\it divisible modules}
$\Dcal=\Gen\mathbf W$.  By \cite[10.1]{RR}
$$\mathcal C\cap\mathcal D=\Add \mathbf W=\Prod\mathbf W.$$

\begin{lemma}\cite[5.4]{RR}\label{5.4} The modules in $\C$ have
  projective dimension at most one, the modules in $\Dcal$ have
  injective dimension at most one.
\end{lemma}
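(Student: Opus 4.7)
The plan is to exploit the fact, already established in the preceding discussion, that $\mathbf W$ is simultaneously a $1$-tilting module over $\La$ with $\Gen\mathbf W=\Dcal$ (so $\pd\mathbf W\leq 1$) and a $1$-cotilting module with $\Cogen\mathbf W=\C$ (so $\id\mathbf W\leq 1$). The background input I will rely on is the standard fact that a concealed canonical algebra $\La$ satisfies $\gldim\La\leq 2$; this can be obtained, for example, via the derived equivalence between $\La$ and a canonical algebra of global dimension two.

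For the first claim, I take $M\in\C=\Cogen\mathbf W$ and choose an embedding $M\hookrightarrow\mathbf W^{\kappa}$ for some cardinal $\kappa$, yielding a short exact sequence
\[
0\to M\to\mathbf W^{\kappa}\to C\to 0
\]
in $\Mla$. Since $\La$ is Artinian and hence both right perfect and right coherent, arbitrary products of projective right $\La$-modules are projective; consequently the class of modules of projective dimension at most one is closed under products, and $\pd\mathbf W^{\kappa}\leq 1$, giving $\Ext^{2}_\La(\mathbf W^{\kappa},N)=0$ for every $N$. Applying $\Ext^{\ast}_\La(-,N)$ to the sequence sandwiches $\Ext^{2}_\La(M,N)$ between this vanishing term on one side and $\Ext^{3}_\La(C,N)=0$ (by the global dimension bound) on the other. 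Hence $\Ext^{2}_\La(M,N)=0$ for every $N$, that is, $\pd M\leq 1$.

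The second claim is proved dually. For $N\in\Dcal=\Gen\mathbf W$, I choose an epimorphism $\mathbf W^{(\kappa)}\twoheadrightarrow N$ with kernel $K$. Because $\La$ is (left) Noetherian, direct sums of modules of injective dimension at most one retain this bound, so $\id\mathbf W^{(\kappa)}\leq 1$ and hence $\Ext^{2}_\La(X,\mathbf W^{(\kappa)})=0$ for every $X$. Running the long exact sequence of $\Ext^{\ast}_\La(X,-)$ on $0\to K\to\mathbf W^{(\kappa)}\to N\to 0$ and using $\gldim\La\leq 2$ to kill $\Ext^{3}_\La(X,K)$ yields $\Ext^{2}_\La(X,N)=0$ for every $X$, whence $\id N\leq 1$.

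The only delicate input is the bound $\gldim\La\leq 2$; once this is granted, the rest is a routine homological sandwich leveraging the dual role of $\mathbf W$. An alternative route to the projective dimension statement would be a directed-colimit reduction to the identification $\C\cap\mla=\add(\p\cup\tube)$ coming from the trisection, combined with the closure of ``$\pd\leq 1$'' under direct limits over a right perfect ring; however, this approach is less symmetric on the injective side, since torsion classes in $\Mla$ need not be closed under subobjects, so I favour the uniform argument above.
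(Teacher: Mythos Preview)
The paper does not supply its own proof of this lemma; it is quoted verbatim from \cite[5.4]{RR}. So there is no in-paper argument to compare against, and your task reduces to giving a correct self-contained proof.

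Your sandwich argument is sound in outline and treats both halves symmetrically, but one justification needs repair. You write that $\gldim\Lambda\le 2$ ``can be obtained, for example, via the derived equivalence between $\Lambda$ and a canonical algebra of global dimension two.'' This is not valid: derived equivalence does not preserve global dimension (already tilted algebras of hereditary algebras furnish counterexamples, with global dimension jumping from $1$ to $2$). The bound $\gldim\Lambda\le 2$ is nonetheless true, because concealed canonical algebras are quasi-tilted---they are endomorphism rings of tilting objects in the hereditary category $\coh\XX$---and quasi-tilted algebras have global dimension at most two by \cite{HRS}. With this correction your argument goes through. One minor slip: for closure of ``$\id\le 1$'' under coproducts of \emph{right} modules you need $\Lambda$ right noetherian rather than left; since $\Lambda$ is finite-dimensional this is of course harmless.

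Your alternative route via $\Ccal=\varinjlim\add(\p\cup\tube)$ together with closure of ``$\pd\le 1$'' under direct limits over a perfect ring is in fact closer in spirit to how such statements are typically handled in \cite{RR}; you are right that the injective side is less transparent that way, which is a genuine advantage of the approach you chose.
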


In particular, the modules in $\p$ have projective dimension at most
one, while those in $\q$ have injective dimension at most one. We will
frequently use the following version of the Auslander-Reiten formula
without further reference.

\begin{lemma}\cite{Sto}\label{Sto}
  Let $A,C$ be $\La$-modules, and assume that $A$ is finitely
  generated without non-zero projective summands.
\begin{enumerate}
\item If ${\rm pdim}A\le 1$,  then ${\Hom}_\Lambda\,(C,\tau\,A) \cong D\Ext^1_\Lambda\,(A,C)$.
 \item If ${\rm idim}\tau A\le 1$, then $D\,{\Hom}_\Lambda\,(A,C)\cong \Ext^1_\Lambda\,(C,\tau\,A)$.
\end{enumerate}
\end{lemma}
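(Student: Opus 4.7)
The plan is to prove (1) by a direct computation with a projective resolution of $A$ combined with the Nakayama functor $\nu=D\Hom_\La(-,\La)$, and to deduce (2) dually via an injective resolution of $\tau A$. Both reductions rest on one basic adjunction linking the duality $D$ to $\nu$.

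For (1): choose a minimal projective presentation of $A$; because $A$ has no projective summand and $\mathrm{pdim}\,A\le 1$, it is already an honest projective resolution $0\to P_1\to P_0\to A\to 0$. Apply $\Hom_\La(-,\La)$ and then $D$; by the definition $\tau A=D\Tr A$ and because $\nu$ sends finitely generated projectives to finitely generated injectives, one obtains the $4$-term exact sequence
$$0\to\tau A\to\nu P_1\to\nu P_0\to\nu A\to 0.$$
In parallel, apply $\Hom_\La(-,C)$ to the resolution (exact at the right end because $\mathrm{pdim}\,A\le 1$) and then $D$ to get
$$0\to D\Ext^1_\La(A,C)\to D\Hom_\La(P_1,C)\to D\Hom_\La(P_0,C)\to D\Hom_\La(A,C)\to 0.$$
The key step is the natural isomorphism $D\Hom_\La(P,C)\cong \Hom_\La(C,\nu P)$ for any finitely generated projective $P$, deduced from $\Hom_\La(P,C)\cong C\otimes_\La P^{\ast}$ and the tensor-hom adjunction $D(C\otimes_\La X)\cong \Hom_\La(C,DX)$. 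Comparing this with the left-exact sequence obtained by applying $\Hom_\La(C,-)$ to $0\to\tau A\to\nu P_1\to\nu P_0$ yields
$$D\Ext^1_\La(A,C)\;\cong\;\ker\bigl[\Hom_\La(C,\nu P_1)\to \Hom_\La(C,\nu P_0)\bigr]\;\cong\;\Hom_\La(C,\tau A).$$

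For (2) I would argue dually. Since $A$ has no projective summand, $\tau A$ has no injective summand and $\tau^{-1}\tau A\cong A$. The hypothesis $\mathrm{idim}\,\tau A\le 1$ provides an injective resolution $0\to\tau A\to I^{0}\to I^{1}\to 0$; applying the inverse Nakayama functor $\nu^{-1}=\Hom_\La(D\La,-)$ converts it into a projective presentation $\nu^{-1}I^{0}\to\nu^{-1}I^{1}\to A\to 0$, exactly by the definition of $\tau^{-1}$. Writing $I^{i}=DQ^{i}$ for finitely generated projective left $\La$-modules $Q^{i}$, so that $\nu^{-1}I^{i}=Q^{i\ast}$, the adjunction $\Hom_\La(Q^{\ast},C)\cong C\otimes_\La Q$ identifies $\Hom_\La(A,C)$ with $\ker[C\otimes_\La Q^{1}\to C\otimes_\La Q^{0}]$. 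Applying $D$ and the companion adjunction $D(C\otimes_\La Q)\cong\Hom_\La(C,DQ)=\Hom_\La(C,I)$ then gives
$$D\Hom_\La(A,C)\;\cong\;\Coker\bigl[\Hom_\La(C,I^{0})\to\Hom_\La(C,I^{1})\bigr]\;\cong\;\Ext^1_\La(C,\tau A).$$

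The main obstacle is bookkeeping rather than novel input: one must check that the two exact sequences in each part fit into a commutative ladder that is compatible with the Nakayama identifications, and verify that those identifications work for arbitrary (possibly infinitely generated) second argument $C$. Everything else is a standard manipulation with the transpose and the Nakayama functor, relying only on $\La$ being an artin algebra and on the "no projective summand" hypothesis, which ensures that $\tau A$ is unambiguously defined via $D\Tr A$ and that $\tau^{-1}\tau A=A$.
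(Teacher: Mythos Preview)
The paper does not supply its own proof of this lemma; it merely cites \cite{Sto} and uses the result as a black box. So there is nothing in the paper to compare your argument against.

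That said, your argument is the standard one and is correct. A couple of small points worth tightening: in part~(2) you should take a \emph{minimal} injective copresentation of $\tau A$ (or else observe that any injective resolution of length one differs from the minimal one by a contractible summand, so the cokernel after applying $\nu^{-1}$ is still $A$); this is what guarantees $\tau^{-1}\tau A\cong A$ via the presentation you write down. Also, the commutativity of the comparison ladders follows from the naturality (in the projective, respectively injective, variable) of the adjunction isomorphisms $D\Hom_\La(P,C)\cong\Hom_\La(C,\nu P)$ and $D(C\otimes_\La Q)\cong\Hom_\La(C,DQ)$, which is routine but should be stated. With these clarifications your proof is complete and works for arbitrary (not necessarily finitely generated) $C$, as required for the applications in the paper.
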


As a first application, we see that $\Dcal={}^o\tube=\tube^\perp$ and
$\Ccal=\q{}^o={}^\perp\q$. Further, we consider the torsion pair
$(\Bcal, \Pcal)$ in $\Mla$ cogenerated by the class $\p$. Then
$\Bcal={}^o\p=\p^\perp$, and by Theorem \ref{res} there is a tilting
module $\mathbf L$ with tilting class $\Gen \mathbf L=\Bcal$. By
\cite[2.2]{KT}, $\mathbf L$ has an infinite filtration by modules in
$\p$, so in particular it is torsion-free and belongs to $\Ccal$. We
call it the {\it Lukas tilting module} as its construction in the
hereditary case goes back to \cite{Lu}, cf.~\cite[3.3]{KT}.
 
\begin{lemma}\label{0}
  Let $X$ be a Pr\"ufer module, or an adic module, or the generic
  module. Further, let $P\in\mathcal P$ and $Q\in\mathcal Q$ be
  non-zero modules. Then $\Ext^1_\Lambda\,(Q,X)\not=0$ and
  $\Ext^1_\Lambda\,(X,P)\not=0$.
\end{lemma}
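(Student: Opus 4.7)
My plan is to transfer the problem to the heart $\A$ of the t-structure, which by Section~\ref{setup} is equivalent to $\Qcoh\XX$. Under this equivalence $Q\in\q$ and $P\in\p$ become nonzero vector bundles (lying in $\q$ and $\p[1]$ respectively inside $\vect\XX=\q\cup\p[1]$), the Pr\"ufer and generic modules become the indecomposable injective objects $S_\infty[1]$ and $G[1]$ of $\A$, and the adic modules become the (non-injective) adic sheaves $S_{-\infty}[1]$.

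For the first inequality I would invoke Lemma~\ref{ExtHom}(a), valid since $Q\in\Qcal=\Gen\q$ and $X\in\Ccal=\Cogen\mathbf{W}$, to rewrite
\[\Ext^1_\Lambda(Q,X)\cong\Hom_\A(Q,X[1]).\]
When $X[1]$ is a Pr\"ufer sheaf $S_\infty[1]$, the socle embedding $S[1]\hookrightarrow S_\infty[1]$ is a monomorphism, and any nonzero morphism $Q\to S[1]$ composes to a nonzero map $Q\to S_\infty[1]$; existence of such a map follows from the fact that the vector bundle $Q$ has a nonzero fiber at the point $x\in\XX$ carrying the tube of $S$. For the generic sheaf $G[1]$, $\Hom_\A(Q,G[1])$ is a nonzero vector space over the skew field $\End_\A G[1]$ whose dimension equals the rank of $Q$. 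For the adic sheaf $S_{-\infty}[1]$ I would argue, using Serre duality on $\coh\XX$, that
\[\Ext^1_\A(Q,T)\cong D\Hom_\A(T,\tau Q)=0\]
for every simple $T\in\tube[1]$, since $\tau Q$ is again a vector bundle and torsion sheaves admit no nonzero morphisms to vector bundles. Writing the inverse system of finite-length quotients $\{S^{[n]}[1]\}_n$ of $S_{-\infty}[1]$ along the coray, this vanishing forces the transition maps of $\{\Hom_\A(Q,S^{[n]}[1])\}_n$ to be surjective; each term is nonzero by the Pr\"ufer-type argument applied to $S^{[n]}[1]$, and a Mittag-Leffler/Milnor argument then yields a nonzero element of $\Hom_\A(Q,S_{-\infty}[1])$.

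For the second inequality I would apply Lemma~\ref{ExtHom}(d), valid since $X,P\in\Ccal$ (note $\p\subset\Ccal$ by the separating property combined with the Auslander-Reiten formula), to identify $\Ext^1_\Lambda(X,P)\cong\Ext^1_\A(X[1],P[1])$. Writing $X[1]=\varinjlim F_n$ as a direct limit of its coherent subobjects (taking $F_n=S_n[1]$ in the Pr\"ufer case, with analogous choices in the generic and adic cases), the telescope $\lim^1$-sequence
\[0\to{\lim}^1\Hom_\A(F_n,P[1])\to\Ext^1_\A(X[1],P[1])\to\varprojlim\Ext^1_\A(F_n,P[1])\to 0\]
holds because $\A$ is hereditary. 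Serre duality on $\coh\XX$ converts $\Ext^1_\A(F_n,P[1])$ into $D\Hom_\A(P[1],\tau F_n)$; the first claim, applied to the vector bundle $P[1]$ and the sheaves $\tau F_n$, which are of the same Pr\"ufer/adic/generic-type as $F_n$, provides a nonzero inverse system with surjective transitions, so the limit is nonzero.

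The principal obstacle will be the adic case of the first claim: unlike Pr\"ufer and generic sheaves, adic sheaves are not injective in $\A$, so there is no direct socle-embedding argument, and one must verify the Mittag-Leffler condition on $\{\Hom_\A(Q,S^{[n]}[1])\}$ via Serre duality. A secondary technical point is the careful use of $\lim^1$-sequences to pass from coherent to infinite-length sheaves in the second claim.
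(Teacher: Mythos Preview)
Your proposal has a fundamental gap: you have conflated the classes $\Qcal=\Gen\q$ and $\Pcal=({}^o\p)^o$ appearing in the lemma with the much smaller classes $\q$ and $\p$ of finite-dimensional indecomposables. When you write ``Under this equivalence $Q\in\q$ and $P\in\p$ become nonzero vector bundles,'' you are only treating indecomposable $Q\in\q$ and $P\in\p$, whereas the lemma concerns arbitrary (possibly infinite-dimensional) $Q\in\Qcal$ and $P\in\Pcal$. For such modules your fibre and rank arguments simply do not apply, since a general $Q\in\Qcal$ viewed as an object of $\A$ need not be a vector bundle at all. The paper's proof proceeds in two steps: the finite-dimensional case ($Q\in\q$, $P\in\p$) is taken from \cite[2.5]{BK}---your sheaf-theoretic approach is a plausible alternative for that step---and then the general case is reduced to it. The reduction is the real content: since $X$ has projective and injective dimension at most one, the vanishing $\Ext^1_\Lambda(Q,X)=0$ passes to every submodule of $Q$, so no finitely generated submodule of $Q$ can lie in $\q$, whence $Q\in\varinjlim\Ccal=\Ccal$, a contradiction. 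The dual argument for $P$ uses that every module purely embeds in the product of its finitely generated quotients and that $\Bcal$ is definable. Your proposal contains no analogue of this reduction.

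There are also secondary problems even in the finite-dimensional case. For the second inequality you write $X[1]=\varinjlim F_n$ ``with analogous choices in the generic and adic cases,'' but the adic sheaf is an inverse limit, not a direct limit of coherent sheaves, so no such telescope is available; and your assertion that the $\tau F_n$ are ``of the same Pr\"ufer/adic/generic-type as $F_n$'' is incorrect, since $\tau F_n$ is again coherent when $F_n$ is. The $\lim^1$-argument can be made to work in the Pr\"ufer case, but the generic and adic cases would need separate treatment.
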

\begin{proof}
  First of all, one shows as in \cite[2.5]{BK} that
  $\Ext^1_\Lambda\,(Q,X)\not=0$ when $Q\in\q$, and
  $\Ext^1_\Lambda(X,P)\not=0$ when $P\in\p$.

  Assume now $\Ext^1_\Lambda\,(Q,X)=0$. Notice that $X$ has injective
  and projective dimension at most one. When $X$ is a Pr\"ufer module
  or the generic module, this follows from Lemma \ref{5.4}, and for
  adic modules it follows by duality.  Hence
  $\Ext^1_\Lambda\,(Q',X)=0$ for all submodules $Q'$ of $Q$. So $Q$
  cannot have submodules in $\q$ and therefore all its finitely
  generated submodules lie in $\Ccal$. But then
  $Q\in\varinjlim\Ccal=\Ccal$, a contradiction.

  For the second statement we proceed dually. If
  $\Ext^1_\Lambda\,(X,P)=0$, it follows that
  $\Ext^1_\Lambda\,(X,P')=0$ for all quotients $P'$ of $P$. So all
  finitely generated factor modules of $P$ lie in $\Bcal$. Since every
  module can be purely embedded in the direct product of all its
  finitely generated factor modules, see \cite[2.2. Ex 3]{CB2}, and
  $\Bcal$ is closed under direct products and pure submodules, we
  infer $P\in\Bcal$, a contradiction.
\end{proof}

\medskip

\subsection{\sc Purity.}\label{purity}
The objects of fp$\A=\coh\XX$ are pure-injective. Indeed, if
$\varepsilon: 0\to A\to B\to C\to 0$ is a pure exact sequence in $\A$
and $X\in\A$ is finitely presented, then
$\Hom_\A(\tau^-X,\varepsilon)$ is exact. Since $\Ext^2_\A(-,-)$
vanishes, this amounts to exactness of
$\Ext^1_\A(\tau^-X,\varepsilon)$, which in turn is equivalent to
exactness of $D\Hom_\A(\varepsilon, X)$ by Serre duality. But this
means that $\Hom_\A(\varepsilon, X)$ is exact, which gives the claim.

\smallskip

Furthermore, a module $C\in\Ccal$ is pure-injective if and only if
$C[1]$ is a pure-injective object in $\A$. This follows from the
following criterion by Jensen and Lenzing.

\begin{lemma}\cite[Theorem 5.4]{P} \label{JL}
  An object $A$ in a locally noetherian category is pure-injective if
  and only if the summation map $A^{(I)}\to A$ factors through the
  canonical embedding $A^{(I)}\to A^I$ for every set $I$.
\end{lemma}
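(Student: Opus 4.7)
My proposal is to establish the two directions separately, with the forward direction being essentially formal and the converse requiring a more delicate construction.

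For the implication $(\Rightarrow)$, the key observation is that the canonical inclusion $\iota_I : A^{(I)} \hookrightarrow A^I$ is itself a pure monomorphism in any locally finitely presented Grothendieck category. To see this, I would write $A^{(I)}$ as the directed colimit $\varinjlim_{J} A^J$ indexed by the poset of finite subsets $J\subseteq I$. Each inclusion $A^J \hookrightarrow A^I$ is split by the decomposition $A^I = A^J\oplus A^{I\setminus J}$, so $\iota_I$ is a directed colimit of split monomorphisms. Since purity is characterized by exactness of $\Hom(F,-)$ for every finitely presented $F$, and this exactness property is preserved under directed colimits, $\iota_I$ is pure. Pure-injectivity of $A$ then lifts the summation map $\Sigma_I : A^{(I)}\to A$ along $\iota_I$ to a morphism $\sigma_I:A^I\to A$, which is exactly the required factorization.

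For the converse $(\Leftarrow)$, I would assume that $\Sigma_I = \sigma_I\circ\iota_I$ for every set $I$, and produce a retraction of an arbitrary pure monomorphism $f:A\hookrightarrow B$. The idea is to choose $I$ as a sufficiently rich index set parametrizing the local structure of $B$ over $A$; for instance, one can take $I$ to consist of all pairs $(u,v)$ where $u:F\to B$ is a morphism from a finitely presented object $F$ and $v:F\to A$ is a lift of $u$ provided by purity of $f$. These lifts can be assembled into a single morphism $g:B\to A^I$ whose precomposition with $f$ is compatible with $\iota_I$, so that $\sigma_I\circ g\circ f = \mathrm{id}_A$. The composite $\sigma_I\circ g:B\to A$ is then the required retraction, establishing pure-injectivity of $A$.

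The main obstacle is precisely the construction of $g$ in the converse direction. While the forward direction is purely formal, the converse relies on the model-theoretic content of purity, namely the solvability of positive-primitive systems, in order to produce the coordinatewise lifts in a coherent way; this is where the hypothesis that summations extend for \emph{every} index set $I$ is essential, as one must accommodate pure monomorphisms of arbitrary size. For the detailed verification I would defer to the argument given in the cited reference \cite[Theorem~5.4]{P}.
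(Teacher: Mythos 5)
The paper does not prove this lemma; it cites \cite[Theorem 5.4]{P} and uses the statement as a black box, so there is no internal argument to compare yours against.

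Your forward direction is correct and standard: expressing $A^{(I)}$ as the directed colimit over finite $J\subseteq I$ of the split inclusions $A^{J}\hookrightarrow A^{I}$ shows that $\iota_I\colon A^{(I)}\hookrightarrow A^{I}$ is a directed colimit of split monomorphisms, hence pure, and pure-injectivity of $A$ then extends the summation map. No issues there.

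The converse is where there is a genuine gap. You fix a pure monomorphism $f\colon A\hookrightarrow B$, take $I$ to be a set of pairs $(u,v)$ with $u\colon F\to B$ a morphism from a finitely presented object and $v\colon F\to A$ a ``lift'', and then assert that these lifts ``can be assembled into a single morphism $g\colon B\to A^{I}$''. But each $v$ is a morphism out of $F$, not out of $B$; the family $(v)_{(u,v)\in I}$ does not define a map $B\to A^{I}$. There is no functorial recipe producing, from a lift $v\colon F\to A$ of $u\colon F\to B$, a morphism $B\to A$ --- and if you could manufacture such extensions you would already have split $f$. The asserted compatibility ``$g\circ f$ is compatible with $\iota_I$'' is also left unexplained, and nothing forces $g\circ f$ to land in $A^{(I)}$. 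Finally, the phrase ``lift provided by purity'' is ambiguous: purity of $f$ says that morphisms $F\to B/A$ lift along $B\to B/A$, equivalently that a morphism $F\to B$ killing the quotient factors through $A$; it does not hand you a lift $v\colon F\to A$ of an \emph{arbitrary} $u\colon F\to B$. The actual converse (as in \cite{P}, or Jensen--Lenzing) proceeds by encoding the splitting problem as an infinite but finitely satisfiable system of pp-conditions, and uses the extension of the summation map along $A^{(I)}\hookrightarrow A^{I}$ precisely to pass from finite to global satisfiability; the index set $I$ parametrizes that system, not pairs of morphisms. Your instinct that the substance lies in the model-theoretic content of purity is right, but the construction of $g$ as sketched would not go through, so the converse remains unproven in your proposal.
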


\begin{prop} \label{pi} 
Assume $\Lambda$ has domestic representation type.
\begin{enumerate}
\item The indecomposable pure-injective objects in $\A=\Qcoh\XX$ are
  precisely the indecomposable coherent objects, the Pr\"ufer sheaves,
  the adic sheaves, and the generic sheaf.
\item The indecomposable pure-injective $\Lambda$-modules are
  precisely the finite dimensional indecomposable modules, the
  Pr\"ufer modules, the adic modules and the generic module.
\end{enumerate}
\end{prop}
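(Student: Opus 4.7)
The forward direction of (1) is essentially in hand from the preliminaries: coherent objects in $\A = \Qcoh\XX$ are pure-injective by the Auslander--Reiten--Serre duality argument in \ref{purity}, and the Pr\"ufer, adic, and generic modules are pure-injective in $\Mla$ by \ref{setup}. The Jensen--Lenzing criterion (Lemma \ref{JL}) transfers pure-injectivity between an object $C \in \Ccal$ and its shift $C[1] \in \A$, since the embedding $(-)[1] : \Ccal \to \A$ commutes with products and small coproducts via the equivalences $H_V$ and $H_V'$ from \ref{hearts of cotilting}. This gives pure-injectivity of the Pr\"ufer, adic, and generic sheaves in (1), and since finite-dimensional modules over an artin algebra are automatically pure-injective, the forward direction of (2) is also done.

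For the deduction of (2) from (1), my plan is to invoke the split torsion pair $(\Gen\q, \Ccal)$ of \ref{setup}. An indecomposable $\La$-module $M$ lies in exactly one of the two classes. In the domestic (tame concealed) case $\Gen\q = \Add\q$, so if $M \in \Gen\q$ then $M$ is finite dimensional in $\q$. If $M \in \Ccal$ is pure-injective and indecomposable, then $M[1]$ is indecomposable pure-injective in the torsion class $\Ccal[1]$ of $\A$. Applying part (1), $M[1]$ is either an indecomposable coherent sheaf, a Pr\"ufer, an adic, or the generic sheaf. Intersecting with $\Ccal[1]$ leaves precisely the shifts $T[1]$ with $T \in \tube$, $P[1]$ with $P \in \p$, the $S_\infty[1]$, the $S_{-\infty}[1]$, and $G[1]$ (coherent vector bundles of the form $Q \in \q$ sit in the torsion-free part $\Gen\q$ of $\A$ and are ruled out). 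Translating back via the shift recovers exactly the listed modules in (2).

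The main obstacle is the converse of (1): ruling out exotic indecomposable pure-injectives in $\A = \Qcoh\XX$. My approach is to reduce to the tame hereditary case. Since $\Lambda$ is tame concealed, $\coh\XX$ admits a tilting bundle $T_{cc}$ whose endomorphism ring is a tame hereditary algebra $H$, so there is a triangle equivalence $\Db(\Qcoh\XX) \simeq \Db(\Mod H)$. The classification of indecomposable pure-injectives over a tame hereditary algebra is classical (Ringel, Prest): finite-dimensional indecomposables, Pr\"ufer, adic, and generic. The tilting functor $\mathbb{R}\Hom_\A(T_{cc}, -)$ identifies the indecomposable $H$-modules in the preprojective component, tubes, and preinjective component with the indecomposable coherent sheaves in $\p[1]$, $\tube[1]$, and $\q$ respectively, and commutes with filtered colimits and products on the relevant pieces, so pure-injectivity is preserved under the translation. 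Comparing Pr\"ufer, adic, and generic points yields a bijection with the corresponding sheaves.

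The delicate technical point in the final step is the preservation of pure-injectivity under the equivalences between the torsion/torsion-free classes of the two tilting-induced torsion pairs (one in $\Mod H$, the other in $\A$). This can be verified directly using Lemma \ref{JL}, since each restricted equivalence preserves both arbitrary products and coproducts. Once this is in place, the Ziegler spectra of $\A$ and of $\Mod H$ are in bijection, and the classical tame hereditary list transfers to the sheaf-theoretic list in (1).
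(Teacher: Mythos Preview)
Your strategy for both parts matches the paper's: for (1), pass to a tame hereditary algebra via a tilting bundle and transfer pure-injectivity across the split torsion pair using the Jensen--Lenzing criterion; for (2), use the split torsion pair $(\Gen\q,\Ccal)$ to reduce to (1).

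There is one genuine gap. In deducing (2) from (1) you assert that ``in the domestic (tame concealed) case $\Gen\q=\Add\q$''. The setup in \ref{setup} only records this for tame \emph{hereditary} algebras; for a general concealed canonical algebra of domestic type it requires proof, and the paper supplies one. The argument is not entirely formal: one writes $Q\in\Gen\q=\varinjlim\q$ via a pure-exact sequence $0\to K\to\bigoplus Q_i\to Q\to 0$, checks that $K\in\Gen\q$ using definability of $\Ccal$ and the split torsion pair, then verifies that this sequence is (pure-)exact in $\A$ and applies the tilting functor $H_V$ from part (1) --- whose kernel is $\Add\q'$ --- to conclude $H_V(Q)=0$, whence $Q\in\Add\q'$ consists of coherent objects and thus $Q\in\Add\q$. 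Without this step your reduction of (2) to (1) is incomplete, since an indecomposable pure-injective landing in $\Gen\q$ would a priori only be a direct limit of preinjectives.

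A minor point on (1): your final sentence claims a bijection of Ziegler spectra between $\A$ and $\Mod H$. This is correct, but the clean way to see it --- and the way the paper argues --- is to note that the tilted torsion pair $(\Ccal'[1],\Add\q')$ in $\A$ is \emph{split} (because the heart $\Mla'$ is hereditary), so every indecomposable of $\A$ already lies in one of the two pieces; then the transfer via Lemma~\ref{JL} applies on the $\Ccal'[1]$-side, while the $\Add\q'$-side consists of coherent objects. Your appeal to ``restricted equivalences preserving products and coproducts'' is the right mechanism, but you should make the splitness explicit, since that is what guarantees no indecomposable pure-injective is a nontrivial extension across the torsion pair.
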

\begin{proof}
  (1) By assumption on $\Lambda$, there is a finitely presented
  tilting object $V\in\A=\Qcoh\XX$ inducing a derived equivalence
  $\mathcal \Dercat({\A})\to \mathcal \Dercat({\Mla'})$ for a tame
  hereditary algebra $\Lambda'$. Denote by $\q'$ the preinjective
  component of $\Lambda'$, and by $(\Add\q',\Ccal')$ the corresponding
  split torsion pair in $\Mla'$. Then $(\Ccal'[1],\Add\q')$ is a split
  torsion pair in $\A$, because the corresponding heart $\Mla'$ is a
  hereditary category. So, every indecomposable non-coherent
  pure-injective object $A\in\A$ belongs to $\Ccal'[1]$, and the claim
  follows from the discussion above and the well-known classification
  of pure-injective modules over tame hereditary algebras (see
  e.g.~\cite{CB2}).

  (2) We first show that $\Gen\q=\Add\q$, as in the hereditary
  case. Recall that $\Gen\q$ is the direct limit closure of $\q$ by
  \cite{CB} or \cite[4.5.2]{GT}. So every $Q\in\Gen\q$ has the form
  $Q=\varinjlim Q_i$ for a suitable system $(Q_i)_{i\in I}$ from $\q$,
  and there is a pure-exact sequence
  $\varepsilon:0\to K \to \bigoplus_{i\in I}Q_i\to Q\to 0$. Let
  $\overline{K}\in \Ccal$ be the torsion-free part of $K$, which is a
  direct summand as the torsion pair $(\Gen\q,\Ccal)$ splits. Then,
  since $\Ccal$ is definable, the pure-injective envelope of
  $\overline{K}$ lies in $\Ccal$ and factors through the
  pure-monomorphism $K\to \bigoplus_{i\in I}Q_i$. This shows that
  $\overline{K}=0$ and $\varepsilon$ has all terms in $\Gen\q$. As in
  the proof of Proposition \ref{sheavesashearts}, we infer that
  $\varepsilon$ is exact in $\A$, and one easily checks that it is
  even pure-exact.  Recall that the tilting object $V$ induces a
  functor $H_V=\Hom_\A(V,-)$ with kernel $\Add\q'$. Now
  $H_V(\varepsilon)$ is exact, thus $H_V(Q)=0$, showing that
  $Q\in\Add\q'$ is a direct sum of coherent objects in $\A$. Viewed as
  a $\Lambda$-module, $Q$ is then a direct sum of finitely presented
  modules in $\Gen\q$, thus $Q\in\Add\q$.

  Now we infer that every indecomposable infinite dimensional
  pure-injective module $X$ must belong to $\Ccal$, hence $X[1]$ is an
  indecomposable pure-injective object in $\A$, and the claim follows
  from (1).
\end{proof}

\bigskip

\section{Universal localization.}\label{Uloc}
In this section, we review the technique of universal localization
developed by Cohn and Schofield, which is needed for the construction
of tilting modules.

\begin{thm} [\cite{Scho}]\label{Scho}
Let $R$ be a ring. For any    set of morphisms $\Sigma$ between
finitely generated projective right $R$-modules
there is a ring homomorphism 
$\lambda\colon R\rightarrow R_\Sigma$ such that
\begin{enumerate}
\item $\lambda$ is \emph{$\Sigma$-inverting:}  if
$\alpha\colon P\rightarrow Q$ belongs to  $\Sigma$, then the $R_\Sigma$-homomorphism
$\alpha\otimes_R 1_{R_\Sigma}\colon P\otimes_R R_\Sigma\rightarrow
Q\otimes_R R_\Sigma$ is an isomorphism. 
\item $\lambda$ is \emph{universal} with respect to (1): any further
  $\Sigma$-inverting ring homomorphism $\lambda':R\to R'$ factors
  uniquely through $\lambda$.
\end{enumerate}
The homomorphism
$\lambda\colon R\rightarrow R_\Sigma$ is a ring epimorphism
with  $\Tor_1^{R}({R_\Sigma},{R_\Sigma})=0$, called
the \emph{universal localization} of $R$ at
$\Sigma$.
\end{thm}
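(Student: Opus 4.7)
The plan is to construct $R_\Sigma$ explicitly by adjoining formal inverses to the morphisms in $\Sigma$, and then to extract the remaining properties from the construction. Properties (1) and (2) will be tautological; the ring epimorphism condition and the Tor vanishing require separate arguments, of increasing difficulty.

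First I would reduce to the case where every $\alpha \in \Sigma$ is a morphism between \emph{free} modules. Given $\alpha \colon P \to Q$ with $P,Q$ finitely generated projective, pick $P',Q'$ so that $P\oplus P'$ and $Q\oplus Q'$ are free, and replace $\alpha$ by $\alpha \oplus 0 \colon P\oplus P' \to Q\oplus Q'$; a ring homomorphism inverts the new morphism if and only if it inverts the old one. Each $\alpha \in \Sigma$ may therefore be represented by an $n_\alpha \times m_\alpha$ matrix $A_\alpha$ with entries in $R$. For each such $\alpha$ I introduce fresh non-commuting indeterminates $y^\alpha_{ij}$ arranged into an $m_\alpha \times n_\alpha$ matrix $Y_\alpha$, and define
\[
R_\Sigma \;=\; \bigl( R * \mathbb{Z}\langle y^\alpha_{ij}\rangle \bigr)\big/ I,
\]
where $I$ is the two-sided ideal generated by the entries of all matrices $Y_\alpha A_\alpha - I_{m_\alpha}$ and $A_\alpha Y_\alpha - I_{n_\alpha}$. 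Let $\lambda \colon R \to R_\Sigma$ be the canonical morphism. Property (1) holds by construction, and property (2) is immediate: the image of each $y^\alpha_{ij}$ is uniquely forced by any $\Sigma$-inverting target $R'$.

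For the ring epimorphism property, I would check that the multiplication map $\mu \colon R_\Sigma \otimes_R R_\Sigma \to R_\Sigma$ is bijective. The two canonical inclusions $i_1, i_2 \colon R_\Sigma \rightrightarrows R_\Sigma \otimes_R R_\Sigma$ into the left and right tensor factors are ring homomorphisms, and when precomposed with $\lambda$ they both invert $\Sigma$. By the uniqueness in (2), $i_1$ and $i_2$ agree as extensions of $R \to R_\Sigma \otimes_R R_\Sigma$ through $\lambda$, hence $i_1 = i_2$, which forces $\mu$ to be a bijection.

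The main obstacle is proving $\Tor_1^R(R_\Sigma, R_\Sigma)=0$. I would first handle the case in which $\Sigma$ consists of a single morphism $\alpha \colon P \to Q$. The key observation is that inside $R_\Sigma$ the map $\alpha \otimes 1 \colon P \otimes_R R_\Sigma \to Q \otimes_R R_\Sigma$ is an isomorphism of finitely generated projective $R_\Sigma$-modules. Using this I would build an explicit resolution of $R_\Sigma$ by $R$-$R_\Sigma$-bimodules that are projective on the left, and verify by direct computation that tensoring on the right over $R$ by $R_\Sigma$ keeps the sequence exact, which yields the Tor vanishing for one morphism. For arbitrary $\Sigma$ I would write it as the filtered union of its finite subsets $\Sigma'$, observe that $R_\Sigma = \varinjlim R_{\Sigma'}$ by the universal property, and conclude by continuity of $\Tor_1^R$ under filtered colimits in each argument. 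The delicate point is the construction of the bimodule resolution for a single $\alpha$: one must carefully match the $R$-module structures on the two sides so that the candidate resolution really is exact after localizing, which is where most of the technical work is concentrated.
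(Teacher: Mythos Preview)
The paper does not prove this theorem: it is quoted with attribution to Schofield's book \cite{Scho} and then used as a black box, so there is no argument in the paper to compare yours against.

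That said, two steps in your outline need correction. First, the reduction to free modules is wrong as stated: the map $\alpha \oplus 0 \colon P \oplus P' \to Q \oplus Q'$ is \emph{never} an isomorphism after any base change once $P'$ or $Q'$ is nonzero, so no ring homomorphism can invert it and the equivalence you claim fails. The standard fix is to pad with an identity, e.g.\ replace $\alpha$ by $\alpha \oplus 1_{P'} \colon P \oplus P' \to Q \oplus P'$ to free the source, and then repeat for the target. Second, your epimorphism argument is not well-posed: for noncommutative $R$ the bimodule $R_\Sigma \otimes_R R_\Sigma$ has no natural ring structure, so $i_1, i_2$ are not ring homomorphisms and the universal property does not apply to them. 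The correct (and shorter) argument is the categorical one: if $f,g \colon R_\Sigma \to T$ are ring maps with $f\lambda = g\lambda$, then $f\lambda$ is $\Sigma$-inverting and uniqueness in (2) forces $f = g$; this is precisely the definition of an epimorphism in the category of rings, and the isomorphism $R_\Sigma \otimes_R R_\Sigma \cong R_\Sigma$ then follows from the standard characterisation of ring epimorphisms.

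Your strategy for $\Tor_1^R(R_\Sigma,R_\Sigma)=0$ --- reduce finite $\Sigma'$ to a single block-diagonal map, build an explicit left-projective bimodule resolution of $R_{\Sigma'}$, then pass to the filtered colimit over finite $\Sigma'\subset\Sigma$ --- is the approach taken in \cite{Scho}. The colimit step is fine (the diagonal is cofinal in the product of the index sets), but the resolution you gesture at is the genuine content of the theorem; without writing it down and checking exactness after tensoring with $R_{\Sigma'}$, the sketch remains just that.
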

Let now ${\mathcal{E}}\subset\mla$ be a set of modules of projective
dimension one.  For each $E\in\mathcal{E},$ we fix a projective
resolution $0\to P\stackrel{{\alpha_E}}{\to} Q\to E\to 0$ in $\mla$,
and we set $\Sigma=\{\alpha_E\mid E\in\mathcal{E}\}.$ We denote by
{$\Lambda_{\mathcal{E}}$} the universal localization of $\Lambda$ at
$\Sigma$, which does not depend on the chosen class $\Sigma$ by
\cite[Theorem~0.6.2]{Cohn}.

\begin{thm}\label{localizingwide}  Let $\mathcal U $ be a set of simple regular modules. 
  Then there is a short exact sequence
  $$0\to \Lambda\stackrel{\lambda}{\to} \Lambda_\Ucal\to
  \Lambda_\Ucal/\Lambda\to 0$$ where
  \begin{enumerate}
  \item $\lambda$ is a homological ring epimorphism,
    i.e.~$\Tor_i^\Lambda(\Lambda_\Ucal,\Lambda_\Ucal)=0$ for all
    $i>0$,
  \item $\Ucal^\wedge=\Ucal^o\cap\Ucal^\perp$ is the essential image
    of the restriction functor $\lambda_\ast:\Mla_\Ucal\to\Mla$,
  \item $\Lambda_\Ucal/\Lambda$ is a directed union of finite
    extensions of modules in $\Ucal$,
  \item $T_\Ucal=\Lambda_\Ucal\oplus \Lambda_\Ucal/\Lambda$ is a
    tilting module with tilting class $\Gen T_\Ucal=\Ucal^\perp$.
\end{enumerate}
\end{thm}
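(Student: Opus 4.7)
The strategy is to adapt the approach of \cite{AS2} from tame hereditary algebras to the concealed canonical setting. The starting observation is that every simple regular $U\in\Ucal$ belongs to $\Ccal$: indeed $\tube\subset\Ccal$ because the separating property forces $\Hom_\Lambda(\q,\tube)=0$, hence $\Hom_\Lambda(\Gen\q,\tube)=0$. By Lemma \ref{5.4}, each $U$ therefore has projective dimension one, so Theorem \ref{Scho} applies to $\Sigma=\{\alpha_U\mid U\in\Ucal\}$, producing a ring epimorphism $\lambda\colon\Lambda\to\Lambda_\Ucal$ with $\Tor_1^\Lambda(\Lambda_\Ucal,\Lambda_\Ucal)=0$.

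The crux is (3), which I would establish by realising $\Lambda_\Ucal$ explicitly as a directed colimit of pushouts. Well-order $\Sigma$ and, starting from $\Lambda$, successively form the pushout along each $\alpha_U$. Each step enlarges the cokernel by a copy of $U$, so the colimit $X$ fits into an exact sequence $0\to\Lambda\to X\to\bar X\to 0$ with $\bar X$ a directed union of finite extensions of modules in $\Ucal$. Universality then identifies $X$ with $\Lambda_\Ucal$, proving (3); injectivity of $\lambda$ is built into the construction, since no element of $\Lambda$ is ever killed along the chain (because $\Hom_\Lambda(U,\Lambda)=0$ for every regular $U$, as $\Lambda$ is preprojective).

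With (3) in hand, statements (1) and (2) follow by standard machinery. The filtration of $\Lambda_\Ucal/\Lambda$ by modules of projective dimension one yields $\Tor_{\ge 2}^\Lambda(\Lambda_\Ucal/\Lambda,-)=0$ by a direct-limit/dimension-shift argument; combined with Schofield's $\Tor_1$-vanishing and the short exact sequence, this gives $\Tor_i^\Lambda(\Lambda_\Ucal,\Lambda_\Ucal)=0$ for all $i\ge 1$, establishing (1). For (2), a module $M$ lies in the essential image of $\lambda_\ast$ precisely when $M\otimes_\Lambda\lambda$ is an isomorphism and $\Tor_1^\Lambda(\Lambda_\Ucal/\Lambda,M)=0$; applying $\Hom_\Lambda(-,M)$ and $\Ext_\Lambda^1(-,M)$ along the filtration of $\Lambda_\Ucal/\Lambda$ reduces both conditions to $\Hom_\Lambda(U,M)=\Ext_\Lambda^1(U,M)=0$ for every $U\in\Ucal$, that is, to $M\in\Ucal^o\cap\Ucal^\perp=\Ucal^\wedge$.

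Finally, for (4): both summands of $T_\Ucal$ have projective dimension at most one ($\Lambda_\Ucal/\Lambda$ via its filtration, $\Lambda_\Ucal$ via the short exact sequence), giving (T1), while the sequence itself is (T3'). For (T2) I would use that $\Ucal^\perp$ is closed under arbitrary direct sums (each $U$ is finitely presented of projective dimension one) and that, by (2), both summands of $T_\Ucal$ lie in $\Ucal^\perp$. The equality $\Gen T_\Ucal=\Ucal^\perp$ then follows in two steps: the inclusion $\subseteq$ uses that $\Ucal^\perp$ is closed under quotients ($\Ext_\Lambda^2(U,-)=0$); the reverse inclusion is obtained by forming the pushout of (T3') along a surjection $\Lambda^{(I)}\twoheadrightarrow M$ for $M\in\Ucal^\perp$ and chasing the resulting long exact sequence of $\Ext^1$ to realise $M$ as a quotient of an object of $\Add T_\Ucal$. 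The main technical obstacle lies in step (3): verifying that the directed colimit of pushouts is indeed $\Sigma$-inverting and universal, and that the filtration by modules in $\Ucal$ descends through the transfinite colimit, requires careful book-keeping, but the argument proceeds exactly as in the tame hereditary case \cite{AS2}.
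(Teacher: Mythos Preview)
Your overall architecture differs from the paper's: rather than building $\Lambda_\Ucal$ by hand, the paper observes that the extension closure $\Ecal$ of $\Ucal$ is a ``well-placed subcategory of bound modules'' in Schofield's sense (a wide subcategory of $\mla$ consisting of modules of projective dimension one with $\Lambda\in\Ecal^o$), and then simply quotes \cite[5.5, 5.7]{S1} for injectivity and the homological property, \cite[2.7]{angarc} for (2), \cite[2.6]{S2} for (3), and \cite[3.10, 4.12]{ringepi} for (4). So the paper's proof is essentially a verification of hypotheses followed by citations; your attempt to do everything by a direct module-theoretic construction is a genuinely different route.

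That route, however, has a real gap at the step you yourself flag. The instruction ``well-order $\Sigma$ and successively form the pushout along each $\alpha_U$'' does not specify a construction: a pushout needs a second leg, and you never say what map $P\to X_n$ you are pushing $\alpha_U\colon P\to Q$ along. If what you intend is the small-object argument (iterated universal extensions killing $\Ext^1_\Lambda(U,-)$), then the colimit $X$ is a $\Lambda$-\emph{module} with $X\in\Ucal^\perp$ and $X/\Lambda$ filtered by $\Ucal$; but nothing you have written gives $X$ a ring structure, let alone shows that $\Lambda\to X$ is the universal $\Sigma$-inverting ring homomorphism. The sentence ``universality then identifies $X$ with $\Lambda_\Ucal$'' conflates the module-theoretic reflection into $\Ucal^\perp$ with the ring-theoretic universal property of Theorem~\ref{Scho}; bridging the two is precisely the content of Schofield's results in \cite{S1,S2}, and is not in \cite{AS2}. (Even the preliminary claim that $X\in\Ucal^o$ is not automatic: a universal extension of $X_n$ by copies of $U$ can acquire maps from $U$, e.g.\ when $U$ lies in a homogeneous tube and $\Ext^1_\Lambda(U,U)\neq 0$.)

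A smaller point: your argument for (2) is circular and garbled. You characterise the essential image via a Tor condition but then reduce it using $\Hom$ and $\Ext^1$ along the filtration from (3), so (2) ends up depending on the unproved (3). In fact (2) is immediate from the universal property: a right $\Lambda$-module $M$ is a $\Lambda_\Ucal$-module if and only if $\Hom_\Lambda(\alpha_U,M)$ is an isomorphism for every $U\in\Ucal$, i.e.\ $\Hom_\Lambda(U,M)=\Ext^1_\Lambda(U,M)=0$; this is what \cite{angarc} records, and it needs neither (1) nor (3).
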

\begin{proof}
  Let $\mathcal E$ be the extension closure of $\mathcal U$.  First of
  all, note that $\Lambda_{\mathcal U}$ coincides with
  $\Lambda_\Ecal$, and $\Ucal^o=\Ecal^o$, $\Ucal^\perp=\Ecal^\perp$,
  $\Ucal^\wedge=\Ecal^\wedge$, cf.~\cite[1.7]{AS2}.  Further, $\Ecal$
  is a class of finitely presented modules of projective dimension one
  which is closed under images, kernels, cokernels, and extensions,
  such $\Lambda\in\Ecal^o$, so it is a well-placed subcategory of
  bound modules in the terminology of \cite{S1,S2}. It then follows
  from \cite[5.5 and 5.7]{S1} that $\lambda$ is an injective
  homological epimorphism. Statement (2) is shown in
  \cite[2.7]{angarc}. Moreover, since $\Lambda$ is noetherian, for any
  finitely generated module $M$, the torsion submodule of $M$ with
  respect to the torsion pair generated by $\Ecal$ is finitely
  generated. Then one shows as in \cite[2.6]{S2} that $\Lambda_\Ecal$
  is a directed union of modules $M_t$ containing $\Lambda$ such that
  $M_t/\Lambda\in\Ecal$, and statement (3) is an immediate
  consequence. Finally, since $\Lambda$ is perfect, the class of
  modules of projective dimension at most one is closed under direct
  limits. We infer that $\Lambda_\Ucal/\Lambda$ and $\Lambda_\Ucal$
  have projective dimension at most one, and \cite[3.10 and
  4.12]{ringepi} yield statement (4).
\end{proof}

Let us describe the left adjoint
$\lambda^\ast=-\otimes_\Lambda\Lambda_\Ucal:\Mla\to\Mla_\Ucal$ of the
restriction functor $\lambda_\ast:\Mla_\Ucal\to\Mla$.

\begin{lemma}\label{leftadj}(cf.~\cite[1.7]{AS2}) Let $\mathcal U $ be
  a set of simple regular modules,  
  let $(\Tcal, \Fcal)$ be the torsion pair generated by $\mathcal U$,
  and let $t$ be the associated torsion radical.
\begin{enumerate}
\item $\Tcal=\{X\in\M\,\mid\, X\otimes_\Lambda \Lambda_\Ucal=0\}$.
\item Every $A\in\Mla$ admits a short exact sequence
  $$0\to A/tA\to A\otimes_\Lambda \Lambda_\Ucal\to A\otimes_\Lambda
  \Lambda_\Ucal/\Lambda\to 0$$
  where $A\otimes_\Lambda \Lambda_\Ucal\in\mathcal U^\wedge$ and
  $A\otimes_\Lambda \Lambda_\Ucal/\Lambda\in\Tcal$.
\end{enumerate} 
\end{lemma}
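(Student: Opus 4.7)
My plan is to prove (1) first and then derive (2) from it by tensoring the defining sequence from Theorem~\ref{localizingwide}. The key preliminary observation, and the point I expect to be the main obstacle, is to obtain a concrete description of $\Tcal$ exploiting the simplicity of the elements of $\Ucal$: a standard filtration argument shows that the extension closure $\Ecal$ of $\Ucal$ inside $\mla$ is automatically closed under subobjects and quotients, so that $\Ecal$ is a Serre subcategory of $\mla$. Consequently $\Tcal=\varinjlim\Ecal$ is a \emph{hereditary} torsion class, and coincides with the class of all $\Lambda$-modules whose finitely generated submodules lie in $\Ecal$.

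For the inclusion $\Tcal\subseteq\{X\mid X\otimes_\Lambda\Lambda_\Ucal=0\}$ in part (1), the base case is $U\in\Ucal$: tensoring its projective resolution $0\to P\xrightarrow{\alpha_U}Q\to U\to 0$ with $\Lambda_\Ucal$ makes $\alpha_U\otimes 1$ an isomorphism by the defining property of universal localization, so both $U\otimes_\Lambda\Lambda_\Ucal$ and $\Tor_1^\Lambda(U,\Lambda_\Ucal)$ vanish. Since $\Lambda_\Ucal$ has projective dimension at most one over $\Lambda$ (cf.~the proof of Theorem~\ref{localizingwide}), the functor $\Tor_2^\Lambda(-,\Lambda_\Ucal)$ is identically zero, and an easy induction along filtrations propagates the vanishing of $\otimes\Lambda_\Ucal$ and $\Tor_1^\Lambda(-,\Lambda_\Ucal)$ from $\Ucal$ to $\Ecal$, and then by direct limits to $\Tcal$.

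For the reverse inclusion, tensoring the sequence $0\to\Lambda\to\Lambda_\Ucal\to\Lambda_\Ucal/\Lambda\to 0$ of Theorem~\ref{localizingwide} with $X$ gives an exact sequence ending in
$$\Tor_1^\Lambda(X,\Lambda_\Ucal/\Lambda)\to X\to X\otimes_\Lambda\Lambda_\Ucal.$$
By Theorem~\ref{localizingwide}(3), $\Lambda_\Ucal/\Lambda\in\Tcal$; since $\Tcal$ is hereditary, $\Tor_1^\Lambda(X,\Lambda_\Ucal/\Lambda)$ is a subquotient of a direct sum of copies of $\Lambda_\Ucal/\Lambda$ and therefore lies in $\Tcal$. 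Thus whenever $X\otimes_\Lambda\Lambda_\Ucal=0$, the module $X$ is a quotient of this Tor-term and so belongs to $\Tcal$ by closure under quotients.

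For part (2), the same long exact sequence now produces the four-term sequence $\Tor_1^\Lambda(A,\Lambda_\Ucal/\Lambda)\to A\to A\otimes_\Lambda\Lambda_\Ucal\to A\otimes_\Lambda\Lambda_\Ucal/\Lambda\to 0$, in which $A\otimes_\Lambda\Lambda_\Ucal/\Lambda\in\Tcal$ as a quotient of $\bigoplus\Lambda_\Ucal/\Lambda$ (via a projective presentation of $A$), and $A\otimes_\Lambda\Lambda_\Ucal$ carries a natural right $\Lambda_\Ucal$-module structure and hence belongs to the essential image $\Ucal^\wedge$ of $\lambda_*$ by Theorem~\ref{localizingwide}(2). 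It remains to identify the kernel $K$ of $A\to A\otimes_\Lambda\Lambda_\Ucal$ with $tA$. Repeating the Tor argument of part (1) yields that $K$ is a quotient of $\Tor_1^\Lambda(A,\Lambda_\Ucal/\Lambda)\in\Tcal$, hence $K\in\Tcal$ and therefore $K\subseteq tA$. The reverse inclusion follows because $A/K$ embeds into $A\otimes_\Lambda\Lambda_\Ucal\in\Ucal^\wedge\subseteq\Ucal^o=\Fcal$, and the torsion-free class $\Fcal$ is closed under submodules, so $A/K\in\Fcal$ forces $K\supseteq tA$.
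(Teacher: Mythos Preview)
Your argument rests on the claim that the extension closure $\Ecal$ of $\Ucal$ is a Serre subcategory of $\mla$ and that consequently $\Tcal=\varinjlim\Ecal$ is a \emph{hereditary} torsion class. Both assertions are false in general. The simple regular modules are simple only in the abelian subcategory $\add\tube$, not in $\mla$: already for the Kronecker algebra with $\Ucal=\{S_\lambda\}$ a simple regular in a homogeneous tube, the simple projective $S_2$ is a proper $\Lambda$-submodule of $S_\lambda\in\Tcal$, yet $S_2\in\Fcal$ (there are no maps from regular to preprojective modules), so $S_2\notin\Tcal$. Thus $\Tcal$ is not closed under submodules. Likewise the simple injective $S_1$, a quotient of $S_\lambda$, lies in $\Tcal$ but not in $\varinjlim\Ecal$, so $\Tcal\neq\varinjlim\Ecal$.

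This gap is fatal for your reverse inclusion in (1) and for the containment $K\subseteq tA$ in (2): in both places you invoke closure of $\Tcal$ under subquotients to force $\Tor_1^\Lambda(A,\Lambda_\Ucal/\Lambda)\in\Tcal$, and that step fails. (It also affects your forward inclusion as written, since the vanishing of $-\otimes\Lambda_\Ucal$ and $\Tor_1^\Lambda(-,\Lambda_\Ucal)$ propagates only to $\varinjlim\Ecal$, not to all of $\Tcal$; but that direction is easily repaired by observing that $\{X\mid X\otimes_\Lambda\Lambda_\Ucal=0\}$ is itself a torsion class---it is closed under quotients, extensions and coproducts---containing $\Ucal$, hence containing $\Tcal$.) For the reverse inclusion one needs a different idea: for instance, show that every $F\in\Fcal=\Ucal^o$ embeds into a module in $\Ucal^\wedge$, so that $X\otimes_\Lambda\Lambda_\Ucal=0$ forces $\Hom_\Lambda(X,F)=0$ by adjunction. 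This requires an argument specific to the situation (e.g.\ exploiting the left $\Lambda$-module structure of $\Lambda_\Ucal/\Lambda$, which by symmetry of universal localization is again filtered by simple regulars), and is essentially what the reference \cite[1.7]{AS2} provides; the paper itself does not give a proof but defers to that source.
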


\begin{prop}\label{prop:sumofprufer} 
  Let $\Ucal$ be a set of simple regular modules.
\begin{enumerate}
\item $\Lambda_\Ucal$ is a torsion-free, and
  $\Lambda_\mathcal{U}/\Lambda$ is a torsion regular
  $\Lambda$-module. If $\mathcal{U}$ is a union of cliques, then
  $\Lambda_\mathcal{U}/\Lambda$ is a direct sum of all Pr\"ufer
  modules from the corresponding tubes.
\item If $\mathcal{U}$ does not contain a complete clique, then
  $\Lambda_\Ucal$ is a concealed canonical algebra with canonical
  trisection $(\p_\Ucal,\tube_\Ucal, \q_\Ucal)$, and the functors
  $\lambda_\ast$ and $\lambda^\ast$ map $\add\p_\Ucal$ to $\add\p$,
  $\add\tube_\Ucal$ to $\add\tube$, $\add\q_\Ucal$ to $\add\q$, and
  viceversa.  In particular,
\begin{enumerate}
\item the simple regular $\Lambda_\Ucal$-modules are precisely the
  modules of the form $S\otimes_\Lambda \Lambda_\Ucal$ where
  $S\not\in\Ucal$ is simple regular;
\item the Pr\"ufer modules over $\Lambda_\Ucal$ are precisely the
  modules of the form
  $S_\infty\otimes_\Lambda \Lambda_\Ucal\cong S_\infty$ where
  $S\not\in\Ucal$ is simple regular;
\item every $A\in\Ucal^o$ admits a short exact sequence
  $0\to A\to A\otimes_\Lambda \Lambda_\Ucal\to A\otimes_\Lambda
  \Lambda_\Ucal/\Lambda\to 0,$
  where $A\otimes_\Lambda \Lambda_\Ucal/\Lambda$ has a finite
  filtration by modules in $\Add\,\Ucal$, and thus lies in
  ${}^\perp(\Ucal^\perp)$;
\item   $\mathbf L\otimes_\Lambda \Lambda_\Ucal$ is the Lukas tilting module over $ \Lambda_\Ucal$.
\end{enumerate}
 \end{enumerate}
\end{prop}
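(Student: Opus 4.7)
The plan is to leverage Theorem~\ref{localizingwide} and Lemma~\ref{leftadj} together with the separation properties of the trisection $(\p,\tube,\q)$, imitating the strategy used in the tame hereditary case in \cite[\S 2]{AS2}.

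For (1), I observe first that $\Lambda\in\add\p$ (since $\Lambda$ is preprojective over a concealed canonical algebra), and that the separation property $\Hom(\tube,\p)=0$ gives $\Lambda\in\Ucal^o$. Applying Lemma~\ref{leftadj}(2) to $A=\Lambda$, whose torsion submodule vanishes, yields $\Lambda_\Ucal\cong\Lambda\otimes_\Lambda\Lambda_\Ucal\in\Ucal^\wedge\subset\Ucal^o$, so $\Lambda_\Ucal$ is torsion-free. Theorem~\ref{localizingwide}(3) moreover says $\Lambda_\Ucal/\Lambda$ is a directed union of modules with finite filtrations by $\Ucal\subset\add\tube$; since $\add\tube$ is extension-closed with direct-limit closure consisting of torsion regular modules, $\Lambda_\Ucal/\Lambda$ is torsion regular. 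When $\Ucal$ is a union of cliques, I would verify that the filtrations assembling $\Lambda_\Ucal/\Lambda$ exhaust every finite length module in the corresponding tubes, so that passing to the direct limit over a full clique recovers the Pr\"ufer modules $S_\infty$ for $S$ in that clique. Completeness can be argued by a dimension-vector count or by appealing to the classification of indecomposable divisible regular modules in \cite[\S 10--11]{RR}.

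For (2), assume $\Ucal$ does not contain a complete clique. The heart of the argument is to transport a trisection from $\Lambda$ to $\Lambda_\Ucal$ via $\lambda_*$ and $\lambda^*$. For each tube $\Ucal_x$ the non-empty complement $\Ucal_x\setminus\Ucal$ produces, under $\lambda^*$, a clique of simple regulars of a tube of lower rank over $\Lambda_\Ucal$, and these glue into a tubular family $\tube_\Ucal$; meanwhile the preprojective and preinjective parts transport essentially untouched through $\lambda_*$ and $\lambda^*$. By the characterization of concealed canonical algebras in \cite{LP} as the finite dimensional algebras carrying a sincere, stable, separating tubular family, once one checks that $\tube_\Ucal$ inherits sincerity, stability and separation, $\Lambda_\Ucal$ is concealed canonical with trisection $(\p_\Ucal,\tube_\Ucal,\q_\Ucal)$, and the stated compatibility of $\lambda_*,\lambda^*$ with the two trisections falls out of the construction. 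Items (a)--(d) then follow: (a) holds by construction; (b) follows by compatibility of $\lambda^*$ with direct limits along a ray; (c) is a direct application of Lemma~\ref{leftadj}(2) to $A\in\Ucal^o$ combined with Theorem~\ref{localizingwide}(3); and (d) uses that $\mathbf L$ is filtered by modules in $\p$, so by right exactness of $\lambda^*$ and $\lambda^*(\p)\subset\p_\Ucal$, the module $\mathbf L\otimes_\Lambda\Lambda_\Ucal$ inherits a filtration by modules in $\p_\Ucal$, which together with the tilting property of Theorem~\ref{localizingwide} forces its tilting class to equal ${}^o\p_\Ucal$.

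The principal obstacle is the verification of concealed canonicity of $\Lambda_\Ucal$ in (2). Unlike the tame hereditary case in \cite[\S 2]{AS2}, where perpendicular category arguments inside a hereditary category quickly yield the reduced tubular family, one cannot apply such arguments directly here, and a careful tube-by-tube analysis of the interplay between $\lambda_*$, $\lambda^*$ and $\tube$ is required. In the domestic case this difficulty can be bypassed by transferring the statement through the derived equivalence with a tame hereditary algebra; more generally it may be cleanest to interpret the universal localization as a Gabriel localization on the heart $\Acal\cong\Qcoh\XX$ (to be developed in Section~\ref{Gabriel loc}) and transport the resulting structure back to $\Mla_\Ucal$.
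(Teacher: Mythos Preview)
Your overall strategy parallels the paper's, but there are two concrete gaps worth flagging.

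For 2(c), combining Lemma~\ref{leftadj}(2) with Theorem~\ref{localizingwide}(3) only shows that $A\otimes_\Lambda\Lambda_\Ucal/\Lambda$ lies in the torsion class $\Tcal$ generated by $\Ucal$; it does not give the claimed \emph{finite} filtration by modules in $\Add\Ucal$. Theorem~\ref{localizingwide}(3) expresses $\Lambda_\Ucal/\Lambda$ as a directed union, and after tensoring with $A$ you get a direct limit, not a finite filtration. The paper obtains finiteness by induction on $m=|\Ucal|$, working tube by tube. For $m=1$ the explicit construction of the left adjoint in \cite[1.3]{CTT} yields $0\to A\to A_0\to S^{(c)}\to 0$ with a single layer $S^{(c)}\in\Add S$. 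For the inductive step one orders $\Ucal=\{S_1,\ldots,S_m\}$ so that $\Ext^1_\Lambda(S_i,S_m)=0$ for $i<m$, sets $\Ucal'=\Ucal\setminus\{S_m\}$, uses $\Lambda_\Ucal\cong(\Lambda_{\Ucal'})_{S_m}$ from \cite[4.6]{Scho}, and stacks the filtration via a $3\times 3$ diagram whose last column has quotient $S_m^{(c)}$. This inductive construction is the actual content of 2(c) and is also what makes 2(d) work cleanly.

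For the ``principal obstacle'' you identify in (2), the paper needs neither the derived-equivalence detour in the domestic case nor the Gabriel-localization interpretation: it cites \cite[Proposition 4.2 (Going down)]{LP} directly, which already establishes that $\Lambda_\Ucal$ is concealed canonical with the stated compatibility of $\lambda_\ast,\lambda^\ast$ with the trisections. Your argument for 2(d) is also looser than the paper's: transporting an infinite $\p$-filtration of $\mathbf L$ through the only right-exact functor $\lambda^\ast$ does not obviously yield a $\p_\Ucal$-filtration. Instead the paper applies the sequence from 2(c) with $A=\mathbf L$ to see that $\mathbf L\otimes_\Lambda\Lambda_\Ucal$ has projective dimension at most one over $\Lambda_\Ucal$ (using that $\lambda$ is a homological epimorphism), and then invokes \cite[Theorem~6]{baerml}. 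Finally, a minor point on (1): showing $\Lambda_\Ucal\in\Ucal^o$ is weaker than torsion-freeness with respect to the full tubular family $\tube$; the paper refers to \cite[1.8]{AS2} for this.
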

\begin{proof} The first part of (2) is shown as in \cite[Proposition
  4.2 (Going down)]{LP}, while (1) and (2)(b) are proven as in
  \cite[Propositions 1.8, 1.10, and 1.11]{AS2}.

  In order to prove 2(c), we assume w.l.o.g. that $\Ucal$ consists of
  $m<r$ simple regular modules from a tube of rank $r$, and we proceed
  by induction on $m$.

  For $m=1$ we have $\Ucal=\{S\}$ for a simple regular in a tube of
  rank $r>1$, and $\lambda_\ast$ is the embedding of the perpendicular
  category $S^\wedge$ of $S$.  By the construction of the left adjoint
  $\lambda^\ast$ in \cite[1.3]{CTT} we know that the short exact
  sequence in Lemma \ref{leftadj} has the form
  $0\to A\to A_0\to S^{(c)}\to 0$ where $c$ is the minimal number of
  generators of $\Ext^1_\Lambda (S, A)$ as a module over
  $\End_\Lambda S$.

  Let now $1<m<r$, and choose a numbering $\Ucal=\{S_1,\ldots,S_m\}$
  such that $\Ext^1_\Lambda(S_i,S_m)=0$ for all $1\le i<m$.  Then
  taking $\Ucal'=\{S_1,\ldots,S_{m-1}\}$, we have that
  $S_m\in(\Ucal')^\wedge$ is a regular $\Lambda_{\Ucal'}$-module. So
  $\Lambda_\Ucal\cong(\Lambda_{\Ucal'})_{S_m}$ by \cite[4.6]{Scho},
  and we can compute
  $A\otimes_\Lambda \Lambda_{\Ucal}\cong (A\otimes_\Lambda
  \Lambda_{\Ucal'})\otimes_{\Lambda_{\Ucal'}}(\Lambda_{\Ucal'})_{S_m}$.
  By induction assumption we have a short exact sequence
  \[\xymatrix{0\ar[r] &A \ar[r]^{}&A\otimes_\Lambda
    \Lambda_{\Ucal'}\ar[r]^{}&A\otimes_\Lambda
    \Lambda_{\Ucal'}/\Lambda\ar[r]{}&0}
 \]
 where $A\otimes_\Lambda \Lambda_{\Ucal'}/\Lambda$ is a finite
 extension of modules in $\Add\,\Ucal'$, and in particular, it belongs
 to $\{S_m\}^o$. Then also
 $A\otimes_\Lambda \Lambda_{\Ucal'}\in \{S_m\}^o$.  Arguing as in the
 case $m=1$, we get an exact sequence
 $0\to A\otimes_\Lambda \Lambda_{\Ucal'}\to A\otimes_\Lambda
 \Lambda_{\Ucal}\to S_m\,^{(c)}\to 0$
 together with a commutative diagram
 \[\xymatrix{ &&0\ar[d]^{}&0\ar[d]^{}&
   \\
   0\ar[r] &A \ar[d]^{=}\ar[r]^{}&A\otimes_\Lambda
   \Lambda_{\Ucal'}\ar[d]^{}\ar[r]^{}&A\otimes_\Lambda
   \Lambda_{\Ucal'}/\Lambda\ar[d]^{}\ar[r]{}&0
   \\
   0\ar[r] &A \ar[r]^{}&A\otimes_\Lambda
   \Lambda_{\Ucal}\ar[d]^{}\ar[r]^{}&A\otimes_\Lambda
   \Lambda_{\Ucal}/\Lambda\ar[d]\ar[r]{}&0
   \\
   & &S_m\,^{(c)}\ar[d]^{}\ar[r]^{=}&S_m\,^{(c)}\ar[d]^{}&
   \\
   &&0&0& }
 \]
which yields the claim.

In 2(d), we specialize to $A=\mathbf L$, which certainly belongs to
$\Ucal^o$ as it is torsion-free. We get a short exact
sequence
$$0\to\mathbf L\to\mathbf L\otimes_\Lambda \Lambda_\Ucal\to \mathbf
L\otimes_\Lambda \Lambda_\Ucal/\Lambda\to 0$$
where the two outer terms have projective dimension at most one over
$\Lambda$, hence so does $\mathbf L\otimes_\Lambda
\Lambda_\Ucal$.
Since $\lambda$ is a homological epimorphism, it follows that
$\mathbf L\otimes_\Lambda \Lambda_\Ucal$ is a $\Lambda_\Ucal$-module
of projective dimension at most one. The remaining part of the proof
works as in \cite[Theorem 6]{baerml}.
\end{proof}

\section{Gabriel localizations of the heart}\label{Gabriel loc}
Aim of this section is to investigate the Gabriel localizations of
$\A=\Qcoh\XX$.  This leads to classification results for tilting or
cotilting modules over $\Lambda$.  More precisely, we are going to
classify the tilting and cotilting modules in the
class $$\Mcal=\Bcal\cap\Ccal.$$ Observe that $\add\tube$ is the class
of finite dimensional modules in $\Mcal$, so there are no finite
dimensional tilting modules in $\Mcal$. Indeed, given a tilting module
$T$, the number of pairwise non-isomorphic indecomposable summands
from $\tube$ is bounded by $\sum_{i=1}^t (p_i-1)$, where
$p_1,\ldots,p_t$ are the ranks of the non-homogeneous tubes in
$\tube$, and it is therefore strictly smaller than the rank of the
Grothendieck group ${\rm rk}\,K_0({\Lambda})=\sum_{i=1}^t (p_i-1)+2$.

\smallskip

On the other hand, the modules in $\add\tube$ can occur as direct
summands of a tilting module. Here is a first structure result.

\begin{prop}\label{shapeofT} Let $T$ be a tilting module in $\Ccal$.  Every module  $X\in\Add T$  has a unique 
  decomposition $ X=X'\oplus \overline{X}$ where $\overline{X}$ is
  torsion-free and $X'$ is a direct sum of Pr\"ufer modules and
  modules from $\tube$.
\end{prop}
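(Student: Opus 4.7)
My plan is to pass to the heart $\A=\Qcoh\XX$, exploit the structure theory of torsion quasi-coherent sheaves over a noncommutative curve of genus zero, and translate the resulting decomposition back to $\Mla$.

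First, by Corollary~\ref{shape2}, $T[1]$ is a tilting object in $\A=\Qcoh\XX$ with $\Gen T[1]\subseteq\Ccal[1]$, so for $X\in\Add T$ we have $X[1]\in\Add T[1]\subseteq\Ccal[1]$. In $\A$ I would consider the torsion pair $(\varinjlim\Hcal_0,\varinjlim\vect\XX)$, where $\Hcal_0=\tube[1]$; it is of finite type since $\Hcal_0$ consists of noetherian objects. The torsion part $t(X[1])$ decomposes by its pointwise support over $\XX$, and locally at each point $x\in\XX$ as a direct sum of Pr\"ufer sheaves $S_\infty[1]$ (the indecomposable injectives of $\varinjlim\Hcal_0$) and finite-length objects of $\Hcal_0$---this is the noncommutative analogue of the Pr\"ufer--Kulikov classification of torsion modules over a DVR.

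Next I would show that the torsion sequence $0\to t(X[1])\to X[1]\to X[1]/t(X[1])\to 0$ splits in $\A$. Since the torsion pair is of finite type, the inclusion of the torsion part is a pure monomorphism. Moreover $t(X[1])$ is pure-injective: the Pr\"ufer summands $S_\infty[1]$ are direct summands of the $\Sigma$-pure-injective object $\mathbf{W}[1]$ (Section~\ref{setup}), while the summands in $\Hcal_0$ are finite length in a locally noetherian category and hence $\Sigma$-pure-injective; because all summands sit uniformly inside $\Sigma$-pure-injective ambient objects, the direct sum is still pure-injective. Consequently $t(X[1])$ splits off as a direct summand, giving $X[1]=X'[1]\oplus\overline{X}[1]$.

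Finally, translating back under the triangle equivalence $\dba\cong\Db$, the Pr\"ufer sheaves $S_\infty[1]$ and objects of $\Hcal_0=\tube[1]$ correspond to Pr\"ufer modules $S_\infty$ and modules in $\tube$, respectively; so $X'$ is a direct sum of Pr\"ufer modules and modules from $\tube$. The torsion-free part $\overline{X}[1]\in\varinjlim\vect\XX\cap\Ccal[1]$ translates to a torsion-free $\Lambda$-module $\overline{X}\in\varinjlim\p=\Cogen G$. Uniqueness follows from the canonicity of the torsion submodule combined with Azumaya's theorem applied to the pure-injective module $X'$, whose indecomposable summands all have local endomorphism rings. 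I expect the main obstacle to be justifying pure-injectivity of $t(X[1])$, since an arbitrary direct sum of $\Sigma$-pure-injective objects need not remain pure-injective; one must carefully exploit that the Pr\"ufer summands all embed as summands of the single $\Sigma$-pure-injective object $\mathbf{W}[1]$ and handle the tubular summands by similarly realising them inside a common $\Sigma$-pure-injective.
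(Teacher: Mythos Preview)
Your proposal has a genuine gap at the step where you claim that $t(X[1])$ decomposes as a direct sum of Pr\"ufer sheaves and finite-length sheaves. There is no ``Pr\"ufer--Kulikov classification'' saying that an arbitrary torsion module over a DVR is a direct sum of cyclic modules and the Pr\"ufer module: over a DVR with uniformiser $\pi$, the torsion-completion of $\bigoplus_{n\ge 1} R/\pi^n R$ is a reduced torsion module that is not a direct sum of cyclics, hence not of the claimed shape. The same phenomenon occurs in each tube $\varinjlim\Ucal_x[1]$. Crucially, at this step you never use the hypothesis $X\in\Add T$; you treat the decomposition as a general structural fact about torsion sheaves, which it is not. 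This also undermines the subsequent pure-injectivity argument, since you deduce pure-injectivity of $t(X[1])$ \emph{from} the claimed decomposition.

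The paper proceeds entirely on the module side and uses $X\in\Add T$ from the outset. One applies $\Hom_\Lambda(-,B)$ with $B\in T^\perp$ to the canonical sequence $0\to X'\to X\to\overline{X}\to 0$; since $\overline{X}\in\varinjlim\p\subset\Ccal$ has projective dimension at most one by Lemma~\ref{5.4}, the term $\Ext^2_\Lambda(\overline{X},B)$ vanishes and one obtains $X'\in{}^\perp(T^\perp)$. With this in hand, the argument of \cite[Proposition~4.2]{AS2} goes through verbatim: the split torsion pair $(\Dcal,\Rcal)$ together with $\Ccal\cap\Dcal=\Add\mathbf W$ forces the divisible part of $X'$ to be a direct sum of Pr\"ufer modules, and the reduced torsion part is handled using that it lies in ${}^\perp(T^\perp)$. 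If you want to salvage a sheaf-theoretic proof, you must insert the condition $X[1]\in\Add T[1]$ (equivalently $X'\in{}^\perp(T^\perp)$) into your analysis of $t(X[1])$ before attempting any decomposition; the structure of the torsion part is a consequence of tilting theory, not of general locally noetherian category theory.
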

\begin{proof}
  The proof of \cite[Proposition 4.2]{AS2}, is still valid in our
  context. We only have to explain why the torsion part $X'$ of a
  module $X\in\Add T$ is again in ${}^\perp(T^\perp)$. This can be
  seen by applying the functor $\Hom_\Lambda(-,B)$ with $B\in T^\perp$
  on the canonical sequence $0\to X'\to X\to \overline{X}\to 0$,
  keeping in mind that $\overline{X}$ is in $\Ccal$ and thus has
  projective dimension at most one by Lemma \ref{5.4}.
\end{proof}

One shows as in \cite[\S 3]{AS2} that the direct summands from $\tube$
in a tilting module $T\in\Mcal$ are arranged in disjoint wings, and
the
direct sum $Y$ of a complete irredundant set of such summands  is  a module of the following form.
 
 \smallskip
 
{\bf Definition}\label{branch}
A multiplicity free $\Lambda$-module $Y\in\add\tube$ is called a {\em
  branch module} if it satisfies
\begin{enumerate}
\item[(B1)] $\Ext^1_\Lambda({Y,Y})=0$,
\item[(B2)] for each simple regular module $S$ and $m\in\N$ such that $S_m$
is a direct summand of $Y$, there exist precisely $m$ direct summands of $Y$
that belong to $\mathcal{W}_{S_m}$.
\end{enumerate}

\smallskip

Finite dimensional torsion summands can be ``removed'' by employing
universal localization.

\begin{prop}\label{downtoLukas}
  Let $T=Y\oplus M$ be a tilting module where $0\not=Y\in\add\tube$,
  and let $\Ucal$ be the set of regular composition factors of
  $Y$. Assume that $M\in\Ucal^o$.  Then $M$ is a tilting module over
  $\Lambda_\Ucal$, which is large if and only if $T$ is large. In
  particular, if $M$ is a torsion-free $\Lambda$-module, then it is a
  torsion-free tilting module over $\Lambda_\Ucal$.
\end{prop}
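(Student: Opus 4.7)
The plan is to verify the tilting axioms (T1)--(T3) for $M$ as a $\Lambda_\Ucal$-module, transferring them from the corresponding axioms for $T=Y\oplus M$ over $\Lambda$ via the universal localization $\lambda\colon\Lambda\to\Lambda_\Ucal$. Since $\lambda$ is a homological ring epimorphism by Theorem~\ref{localizingwide}(1), translation between $\Lambda$- and $\Lambda_\Ucal$-Ext groups is clean, so the real work lies first in recognizing $M$ as a genuine $\Lambda_\Ucal$-module, i.e., in showing $M\in\Ucal^\wedge=\Ucal^o\cap\Ucal^\perp$.

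First I would establish $\Ext^1_\Lambda(\Ucal,M)=0$ (the containment $M\in\Ucal^o$ being part of the hypothesis). Since $T$ is tilting, $\Ext^1_\Lambda(Y,M)=0$; each simple regular $S\in\Ucal$ occurs as a regular composition factor of a summand of $Y$, and the branch conditions (B1) and (B2) supply, within each wing $\mathcal{W}_{S_m}$, enough short exact sequences relating the summands of $Y$ to the simples in $\Ucal$ so that, upon applying $\Hom_\Lambda(-,M)$ and invoking the vanishing of $\Hom_\Lambda(\Ucal,M)$ and $\Ext^1_\Lambda(Y,M)$, the desired $\Ext^1_\Lambda(S,M)=0$ can be read off step by step. (For instance, in a wing of size two where $Y$ contains $S_2$ and one of the simples $S,\tau^{-1}S$, the long $\Hom/\Ext^1$-sequence of $0\to S\to S_2\to\tau^{-1}S\to 0$ immediately delivers the missing vanishing.) This wing induction is the main obstacle of the proof, and I would model it on the analogous step in the tame hereditary setting of \cite[\S 3]{AS2}. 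Once $M\in\Ucal^\wedge$, Theorem~\ref{localizingwide}(2) identifies $M$ canonically with a $\Lambda_\Ucal$-module.

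The verification of (T1)--(T3) over $\Lambda_\Ucal$ then becomes largely formal. Homologicality of $\lambda$ yields $\Ext^i_{\Lambda_\Ucal}(A,B)\cong\Ext^i_\Lambda(A,B)$ for all $A,B\in\Mla_\Ucal$ and $i\ge 0$; combined with $\pd_\Lambda M\le 1$ (as $M$ is a summand of the tilting module $T$, by Lemma~\ref{5.4}) this gives (T1), and it immediately delivers $\Ext^1_{\Lambda_\Ucal}(M,M^{(\alpha)})\cong\Ext^1_\Lambda(M,M^{(\alpha)})=0$ for (T2). For (T3), I would take $X\in\Mla_\Ucal$ with $\Hom_{\Lambda_\Ucal}(M,X)=\Ext^1_{\Lambda_\Ucal}(M,X)=0$; viewing $X$ as a $\Lambda$-module in $\Ucal^\wedge$, the same vanishings hold over $\Lambda$. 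Since $Y\in\add\tube$ admits a finite filtration by the modules of $\Ucal$, and $\Ucal\subset\tube\subset\Ccal$ has projective dimension at most one by Lemma~\ref{5.4}, a standard dimension shift using $X\in\Ucal^\wedge$ gives $\Hom_\Lambda(Y,X)=\Ext^1_\Lambda(Y,X)=0$; consequently $\Hom_\Lambda(T,X)=\Ext^1_\Lambda(T,X)=0$, and (T3) for $T$ forces $X=0$.

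For the final assertions, since $Y$ is finitely generated and $\lambda_\ast$ is fully faithful, the indecomposable summands of $M$ correspond whether viewed over $\Lambda$ or over $\Lambda_\Ucal$; thus $T$ is large iff $M$ has infinitely many pairwise non-isomorphic indecomposable summands iff $M$ is large over $\Lambda_\Ucal$. If $M$ is torsion-free over $\Lambda$, then $M\in\varinjlim\p\subset\tube^o\subset\Ucal^o$, so the running hypothesis is automatic; writing $M=\varinjlim_i P_i$ with $P_i\in\p$ and using that $\lambda^\ast=-\otimes_\Lambda\Lambda_\Ucal$ commutes with direct limits, maps $\p$ to $\p_\Ucal$ by Proposition~\ref{prop:sumofprufer}(2), and satisfies $M\otimes_\Lambda\Lambda_\Ucal\cong M$ (as $M\in\Ucal^\wedge$), I conclude $M\cong\varinjlim_i(P_i\otimes_\Lambda\Lambda_\Ucal)\in\varinjlim\p_\Ucal$, so $M$ is torsion-free over $\Lambda_\Ucal$.
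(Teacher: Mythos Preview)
Your argument for the equivalence ``$T$ large $\Leftrightarrow$ $M$ large'' contains a genuine error: being large means \emph{not equivalent to a finite-dimensional tilting module}, which is not the same as having infinitely many pairwise non-isomorphic indecomposable summands. For instance, $T_{(Y,\emptyset)}=Y\oplus(\mathbf L\otimes_\Lambda\Lambda_\Ucal)$ is large, yet $M=\mathbf L\otimes_\Lambda\Lambda_\Ucal$ need not have infinitely many indecomposable summands. The paper instead observes that $\Ucal$ cannot contain a complete clique (since $\Ext^1_\Lambda(Y,Y)=0$ forces every indecomposable summand of $Y$ to have regular length strictly less than the rank of its tube), so by Proposition~\ref{prop:sumofprufer}(2) the ring $\Lambda_\Ucal$ is again concealed canonical and in particular finite dimensional; hence $\mla_\Ucal\subset\mla$, and $M$ is equivalent over $\Lambda_\Ucal$ to some $M'\in\mla_\Ucal$ if and only if $T$ is equivalent over $\Lambda$ to $Y\oplus M'\in\mla$.

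A second, more minor issue: your argument for $M\in\Ucal^\perp$ invokes the branch condition (B2), but the proposition does not assume that $Y$ is a branch module; $Y$ could for example be a single $S_m$ with no further summands in its wing. The paper avoids this by reducing to the case $Y=S_m$ indecomposable: from $\Ext^1_\Lambda(S_m,M)=0$ and $\pd(S_m/S_i)\le 1$ one gets $\Ext^1_\Lambda(S_i,M)=0$ for all $i\le m$, and then the Auslander--Reiten sequences together with $M\in\Ucal^o$ propagate this to all regular composition factors of $S_m$. Your wing induction can be repaired along these lines. The remainder of your proof---the verification of (T1)--(T3) via the homological epimorphism, and the torsion-freeness argument via $M\cong M\otimes_\Lambda\Lambda_\Ucal=\varinjlim(P_i\otimes_\Lambda\Lambda_\Ucal)\in\varinjlim\p_\Ucal$---is correct; the latter is a valid alternative to the paper's direct computation that $\Hom_\Lambda(S\otimes_\Lambda\Lambda_\Ucal,M)=0$ for each simple regular $S\notin\Ucal$.
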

\begin{proof} 
  In order to show that $M$ is a $\Lambda_\Ucal$-module, we have to
  verify $M\in\Ucal^\perp$. This is deduced inductively from the fact
  that $M\in Y^\perp$. In fact, if we assume w.l.o.g. that $Y=S_m$ is
  indecomposable, then $\Ext^1_\Lambda(S_i,M)=0$ for all its regular
  submodules $S_i, i<m$, because $Y/S_i$ has projective dimension
  one. Then, applying $\Hom(-,M)$ on the Auslander-Reiten sequences
  and keeping in mind that $\Hom(S_i, M)=0$, one obtains
  $\Ext^1_\Lambda((\tau^-S)_i,M)=0$ for all $i<m$.

  Now, since $\lambda:\Lambda{\to} \Lambda_\Ucal$ is a homological
  ring epimorphism, a $\Lambda_\Ucal$-module $N$ satisfies
  $\Ext^i_{\Lambda_\Ucal}(M,N)=0$ for some $i\ge 0$ if and only if
  $\Ext^i_{\Lambda}(T,N)=0$. It follows immediately that $M$ fulfills
  conditions (T1), (T2), and (T3) over $\Lambda_\Ucal$.

  As discussed above, $\Ucal$ cannot contain a complete clique, so
  $\Lambda_\Ucal$ is a concealed canonical algebra. From
  $\Lambda_\Ucal\in\mla$ we infer $\mla_\Ucal\subset\mla$, which shows
  that $M$ is equivalent to a finite dimensional tilting module over
  $\Lambda_\Ucal$ if and only if so is $T$ over $\Lambda$.

  In the special case when $M$ is torsion-free, the assumption
  $M\in \Ucal^o$ is satisfied. Further, by Proposition
  \ref{prop:sumofprufer}, the simple regular $\Lambda_\Ucal$-modules
  are precisely the modules of the form
  $S\otimes_\Lambda \Lambda_\Ucal$ where $S\not\in\Ucal$ is simple
  regular, and for any such $S$ there is an exact sequence
  $0\to S\to S\otimes_\Lambda \Lambda_\Ucal\to S\otimes_\Lambda
  \Lambda_\Ucal/\Lambda\to 0,$
  where $S\otimes_\Lambda \Lambda_\Ucal/\Lambda$ is a finite extension
  of modules in $\Add\,\Ucal$. Applying $\Hom_\Lambda(-,M)$ we get
  $0= \Hom_\Lambda(S\otimes_\Lambda \Lambda_\Ucal/\Lambda, M)\to
  \Hom_\Lambda(S\otimes_\Lambda \Lambda_\Ucal, M) \to \Hom_\Lambda(S,
  M)=0$, showing that $M$ is torsion-free over $\Lambda_\Ucal$.
\end{proof}

\medskip

\subsection{\sc Tilting modules in $\Mcal$.}

Let now $Y$ be a branch module, and let $\Ucal$ be the set of all
regular composition factors of $Y$. Denote
    $$T_{(Y,\emptyset)}=Y\oplus (\mathbf{L}\otimes_\Lambda \Lambda_{\mathcal U}).$$
    Moreover, given a non-empty subset $P\subset \mathbb X$, let
    $\mathcal{V}$ be the union of $\Ucal$ with the cliques of the
    tubes $\Ucal_x, x\in P$, and set
    $$T_{(Y,P)}=Y\oplus \bigoplus\{\text{all } S_\infty \text{ in }
    {}^\perp Y \text{ from tubes } \Ucal_x, {x\in P}\}\oplus
    \Lambda_{\mathcal V}$$ Dually, consider
$$C_{(Y,P)}=Y\,\oplus \prod\{\text{all } S_{-\infty} \text{ in }
Y^\perp\text{ from }  \Ucal_x, {x\in P} \}\oplus G\oplus
\bigoplus\{\text{all } S_\infty \text{ in } {}^\perp Y \text{ from } 
\Ucal_x, {x\not\in P}\}$$
Notice that by the Auslander-Reiten formula, $S_{-\infty} \in Y^\perp$
\ifa\ $S$ does not occur as a regular composition factor of $\tau Y$,
and similarly, $S_\infty \in {}^\perp Y$ \ifa\ $S$ does not occur as a
regular composition factor of $\tau^- Y$.
 
 \smallskip
 
 We will prove that the $T_{(Y,P)}$ and the $C_{(Y,P)}$ give a
 complete list of all tilting, respectively cotilting, modules in
 $\Mcal$, up to equivalence. We start by collecting some information
 on the class $\Mcal$.
 
  \begin{thm}\label{classpi}  \cite[2.2]{Rac}
    A $\Lambda$-module $M$ is pure-injective and belongs to $\Mcal$ if
    and only if there is a decomposition $M=M'\oplus M''$ where
    $M'\in\Prod\tube$ and $M''\in\Add\mathbf W$.
 \end{thm}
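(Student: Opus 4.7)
The backward direction is direct. First, $\tube\subseteq\Mcal$ by the separating property (which yields $\Hom(\p,\tube)=\Hom(\tube,\q)=0$), and $\Mcal=\Bcal\cap\Ccal$ is closed under products since $\Bcal$ is a tilting and $\Ccal$ a cotilting class, so $\Prod\tube\subseteq\Mcal$ consists of pure-injectives. Second, $\mathbf W\in\Mcal$: $\mathbf W\in\Ccal$ by construction, and $\mathbf W\in\Bcal$ is checked using the Auslander--Reiten formula and the separating property for the Pr\"ufer summands, together with a direct-limit argument via $\varinjlim\p=\Cogen G$ for the generic summand. Since $\mathbf W$ is $\Sigma$-pure-injective (\ref{setup}) and $\Bcal,\Ccal$ are pure-closed, $\Add\mathbf W\subseteq\Mcal$ consists of pure-injectives, and the sum $M'\oplus M''$ is pure-injective.

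For the forward direction, my plan is to use the split torsion pair $(\Dcal,\Rcal)$ cogenerated by $\tube$ from \ref{ARF}. This yields a decomposition
\[
M=M_\Dcal\oplus M_\Rcal,\qquad M_\Dcal\in\Dcal,\;M_\Rcal\in\Rcal.
\]
The submodule $M_\Dcal$ inherits from $M$ the property of lying in $\Ccal$, hence $M_\Dcal\in\Ccal\cap\Dcal=\Add\mathbf W$ by the identity in \ref{ARF}; this supplies $M'':=M_\Dcal$. It remains to show $M_\Rcal\in\Prod\tube$, which is the main obstacle.

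The module $M_\Rcal$ is pure-injective (being a summand of $M$) and lies in $\Bcal\cap\Ccal\cap\Rcal$. My strategy is to classify the indecomposable pure-injective summands allowed by this intersection. The indecomposable pure-injectives in $\Ccal$ are the finite-dimensional indecomposables in $\add(\tube\cup\p)$, the Pr\"ufer modules, and the generic module (adic modules being excluded by Lemma~\ref{0}; cf.\ Proposition~\ref{pi} in the domestic case). Of these, indecomposables from $\p$ are excluded by $\Bcal={}^o\p$ (since $\Hom(P,P)\neq 0$), while the Pr\"ufer and generic modules lie in $\Dcal=\Gen\mathbf W$ (as summands of $\mathbf W$) and so are excluded by $\Rcal=\Dcal^o$. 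Consequently the indecomposable pure-injective summands of $M_\Rcal$ can only come from $\tube$. Combining this with the absence of superdecomposable pure-injective summands in $\Bcal\cap\Ccal\cap\Rcal$ over a concealed canonical algebra---the delicate step that demands a Ziegler-spectrum analysis, and which is the real heart of the proof---$M_\Rcal$ is realised as the pure-injective envelope of a direct sum $\bigoplus_\alpha T_\alpha$ of indecomposables from $\tube$. Since $\bigoplus_\alpha T_\alpha\hookrightarrow\prod_\alpha T_\alpha$ is a pure embedding whose target is pure-injective, the envelope sits as a direct summand of $\prod_\alpha T_\alpha$, placing $M_\Rcal$ in $\Prod\tube$.
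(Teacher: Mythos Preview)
The paper does not prove this theorem; it is quoted from Ringel~\cite[2.2]{Rac} and used as a black box. So there is no in-paper argument to compare against, and your attempt must stand on its own.

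Your backward direction and the reduction via the split pair $(\Dcal,\Rcal)$ are fine, and the identification $M_\Dcal\in\Ccal\cap\Dcal=\Add\mathbf W$ is exactly the intended step. The forward analysis of $M_\Rcal$, however, has two genuine problems.

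First, your list of indecomposable pure-injectives in $\Bcal\cap\Ccal\cap\Rcal$ is wrong: the adic modules $S_{-\infty}$ belong there. They lie in $\Mcal$ (being inverse limits in the definable class $\Mcal$), and they lie in $\Rcal$ since they are reduced; yet they are infinite dimensional, hence not in $\tube$. Lemma~\ref{0} says nothing about membership in $\Ccal$ --- it concerns nonvanishing of certain $\Ext^1$ into $\Pcal$ and from $\Qcal$ --- so your exclusion of adics is unfounded. This is not fatal to the \emph{conclusion}, because adic modules do sit in $\Prod\tube$; but it breaks your argument, which aimed to force every indecomposable summand into $\tube$ itself. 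You would need to argue instead that the indecomposable pure-injective summands lie in $\Prod\tube$, and then that $\Prod\tube$ is closed under the relevant hull.

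Second, and more seriously, you explicitly flag the absence of superdecomposable pure-injectives in $\Bcal\cap\Ccal\cap\Rcal$ as ``the real heart of the proof'' and then do not supply it. That is the entire content of Ringel's theorem: one must show that a pure-injective in $\Mcal\cap\Rcal$ is the pure-injective hull of a direct sum of indecomposables from $\tube$ (equivalently, lies in $\Prod\tube$). Without that Ziegler-spectrum or elementary-cogenerator argument, what you have written is an outline of the easy reductions together with an acknowledgement of the missing core. Also note that appealing to Proposition~\ref{pi} is not available here: that classification is proved only in the domestic case, whereas Theorem~\ref{classpi} is used throughout Section~\ref{Gabriel loc} for arbitrary concealed canonical $\Lambda$.
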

 
 \begin{lemma}\label{SandT}
   Let $T$ be a tilting module, and let
   $\Scal={}^\perp(T^\perp)\cap\mla$ be the resolving subcategory
   corresponding to $T$ under the bijection of Theorem
   \ref{res}. \begin{enumerate}
\item $T\in\Ccal$ if and only if $\Scal\subset\Ccal$.
\item  $T\in\Bcal$ if and only if $\p\subset\Scal$.
\item $T\in\Mcal$ if and only if $\Scal=\add(\p\cup\tube')$ for some
  subset $\tube'\subset\tube$.  In particular, $\Scal$ is then closed
  under submodules.
\item Assume $T\in\Ccal$. Then the ray of a simple regular module $S$
  is completely contained in $\Scal$ if and only if the corresponding
  Pr\"ufer module $S_\infty$ is a direct summand of $T$. On the other
  hand, if $\Scal$ contains some, but not all modules from that ray,
  then $T$ has a direct summand $S_m\in\Scal$ (which is the module of
  maximal regular length in $\tube'\cap\{S_n\mid n\in\N\}$).
\end{enumerate} 
\end{lemma}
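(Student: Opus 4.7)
For (1), the plan is to exploit the identification $\Ccal = {}^{\perp}\q$ from Section \ref{ARF} together with $\Scal^{\perp} = T^{\perp}$ from Theorem \ref{res}: both conditions ``$\Scal \subset \Ccal$'' and ``$T \in \Ccal$'' are equivalent to $\q \subset T^{\perp}$, so the equivalence is immediate without any filtration arguments. For (2), the direction ($\Leftarrow$) is also immediate from $T \in T^{\perp}$. For ($\Rightarrow$), since $\Bcal = \p^{\perp}$ is not closed under quotients I would use projective dimension one: given $T \in \p^{\perp}$ and $X \in \Gen T$, take a short exact sequence $0 \to K \to T^{(I)} \to X \to 0$; since $P \in \p$ is finitely presented of projective dimension one, $\Ext^{1}(P,-)$ commutes with direct sums so $\Ext^{1}(P, T^{(I)}) = 0$, and $\Ext^{2}(P,K) = 0$; the long exact sequence yields $\Ext^{1}(P,X) = 0$, giving $\p \subset {}^{\perp}(T^{\perp}) \cap \mla = \Scal$.

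Statement (3) is then the combination: $T \in \Mcal$ iff $\p \subset \Scal \subset \Ccal$. Along the trisection every indecomposable of $\mla$ lies in $\p \cup \tube \cup \q$; no nonzero $Q \in \q$ belongs to $\Ccal = \q^{o}$ (since $\Hom(Q,Q) \ne 0$), so setting $\tube' := \Scal \cap \tube$, the indecomposable summands of modules in $\Scal$ lie in $\p \cup \tube'$, giving $\Scal = \add(\p \cup \tube')$. For the ``closed under submodules'' addendum: a finite-dimensional submodule $N$ of some $M = M_{\p} \oplus M_{\tube'} \in \Scal$ has no $\q$-summand since $\Hom(\q, \p) = \Hom(\q, \tube) = 0$ by the separating property, so $N = N_{\p} \oplus N_{\tube}$ with $N_{\p} \in \add\p$ and $N_{\tube} \in \add\tube$; the vanishing $\Hom(\tube,\p) = 0$ makes $N_{\tube}$ embed into $M_{\tube'}$; and, using (4), $\tube'$ is closed under submodules (each ray is downward closed), so $N_{\tube} \in \add\tube'$.

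For (4), assume $T \in \Ccal$. The direction ($\Leftarrow$) is a direct Ext computation: $S_{\infty} \in \Add T$ places $S_{\infty}$ in ${}^{\perp}(T^{\perp})$ (as $T \in {}^{\perp}(T^{\perp})$ and the latter is closed under direct sums and summands), and the exact sequence $0 \to S_n \to S_{\infty} \to S_{\infty}/S_n \to 0$ has $S_{\infty}/S_n \in \varinjlim\tube \subset \Ccal$, hence of projective dimension one by Lemma \ref{5.4}; for any $X \in T^{\perp}$ the long exact Ext sequence yields $\Ext^{1}(S_n, X) = 0$ from $\Ext^{1}(S_{\infty}, X) = 0$ and $\Ext^{2}(S_{\infty}/S_n, X) = 0$, placing $S_n$ in $\Scal$.

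The main obstacle is direction ($\Rightarrow$): from the hypothesis that the whole ray $\{S_n\}$ lies in $\Scal$ one needs $S_{\infty} \in \Add T$. My plan is to pass through the heart via Corollary \ref{shape2}, under which $T[1]$ becomes a tilting object of the hereditary Grothendieck category $\A = \Qcoh\XX$ and $S_{\infty}[1]$ is an indecomposable injective object. In this hereditary setting the standard splitting argument shows $\Add T[1] = T[1]^{\perp} \cap {}^{\perp}(T[1]^{\perp})$; injectivity of $S_{\infty}[1]$ in $\A$ gives $S_{\infty}[1] \in T[1]^{\perp}$ automatically, and the $\Scal$-hypothesis transports via Lemma \ref{ExtHom} and the derived equivalence into $S_{\infty}[1] \in {}^{\perp}(T[1]^{\perp})$, whence $S_{\infty} \in \Add T$. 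For the maximality statement: if $S_m$ is the top of $\Scal \cap \{S_n\}$ (assumed to be finite) and $S_{\infty} \notin \Add T$, then combining the branch-module structure of $T$ from Proposition \ref{shapeofT} with the wing analysis of \cite[\S 3--4]{AS2} forces $S_m$ itself to be a direct summand of $T$.
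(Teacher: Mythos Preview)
Your arguments for (1)--(3) are correct, and in fact your treatment of (1) via the equivalence of both conditions with $\q\subset T^\perp$ is cleaner than the paper's filtration argument. There is a slip in (2), however: $\Bcal={}^o\p$ \emph{is} a torsion class and hence closed under quotients and coproducts, so the paper's one-line route $T\in\Bcal\Leftrightarrow \Gen T\subset\Bcal=\p^\perp\Leftrightarrow \p\subset{}^\perp(T^\perp)$ is available; your longer Ext-computation is still valid but not needed. In (3) your appeal to (4) for ``rays are downward closed in $\tube'$'' really just reuses the short exact sequence argument from your own proof of (4)($\Leftarrow$) with $S_m$ in place of $S_\infty$; the paper does exactly this, directly and without invoking (4).

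The genuine gap is in (4)($\Rightarrow$). Via Lemma \ref{ExtHom}(d) and Corollary \ref{shape2}, your claim $S_\infty[1]\in{}^\perp_\A(T[1]^\perp)$ unwinds to the module-theoretic statement $S_\infty\in{}^\perp(T^\perp)$, and you have not explained why this follows from $S_n\in\Scal$ for all $n$. Left Ext-orthogonal classes are closed under coproducts and transfinite extensions but \emph{not} under arbitrary direct limits, so $S_\infty=\varinjlim S_n$ does not land in ${}^\perp(T^\perp)$ for free (there is a $\varprojlim^1\Hom(S_n,-)$ obstruction). The missing observation is that $S_\infty$ is actually $\Scal$-\emph{filtered}: if $r$ is the rank of the tube, the chain $0\subset S_r\subset S_{2r}\subset\cdots$ has successive quotients $S_{(i+1)r}/S_{ir}\cong(\tau^{-ir}S)_r\cong S_r\in\Scal$, so Eklof's lemma yields $S_\infty\in{}^\perp(\Scal^\perp)={}^\perp(T^\perp)$. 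With this in hand your heart argument goes through; alternatively one can bypass the heart entirely, since $T\in\Ccal={}^\perp\mathbf W$ and $S_\infty\in\Add\mathbf W$ already give $S_\infty\in T^\perp$, whence $S_\infty\in T^\perp\cap{}^\perp(T^\perp)=\Add T$. The paper itself simply refers to the parallel computation in \cite[4.5 and 3.3]{AS2}.
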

\begin{proof}
  (1) First of all, recall that the cotilting class
  $\Ccal={}^\perp(\Ccal^\perp)$ is closed under direct summands and
  filtrations, cf.~\cite[3.1.2]{GT}. The if-part then follows from the
  fact that $T$ is a direct summand of an $\Scal$-filtered module by
  \cite[3.2.4]{GT}. Conversely, $T\in\mathcal C$ implies
  $\C^\perp\subset T^\perp$, and
  ${}^\perp(T^\perp)\subset{}^\perp(\C^\perp)=\C$, hence
  $\Scal\subset \C$.

  (2) Since $\Bcal$ is a torsion class, $T\in\Bcal$ if and only if
  $T^\perp=\Gen T\subset \Bcal=\p^\perp$, that is,
  ${}^\perp(\p^\perp)\subset{}^\perp(T^\perp)$, and as above we see
  that the latter is equivalent to $\p\subset \Scal$.

  (3) The first statement follows combining (1) and (2).  Now, if
  $A'\in\add\tube$ is a submodule of $A\in\add\tube'$, then
  $A/A'\in\add\tube$ has projective dimension one, hence
  $A'\in{}^\perp(T^\perp)$ if so does $A$. This shows that $\Scal$ is
  closed under submodules.

(4)  is shown as  in \cite[4.5 and 3.3]{AS2}.
\end{proof}

 Now we are ready for our classification result.
 
\begin{thm}\label{tiltinginM}
   There are  one-one-correspondences between
\begin{enumerate}
\item[(1)] 
the pairs  $(Y,P)$ where $Y$ is a branch module and $P\subset\mathbb X,$
\item[(2)]
the equivalence  classes of  tilting modules in $\Mcal$,
\item[(3)] the equivalence  classes of cotilting modules in $\Mcal$,
\end{enumerate}
assigning to $(Y,P)$ the (equivalence class of) the tilting module
$T_{(Y,P)}$ and the cotilting module $C_{(Y,P)}$ defined above.
\end{thm}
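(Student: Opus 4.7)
The plan is to establish the bijection $(1)\leftrightarrow(2)$ directly and then obtain $(1)\leftrightarrow(3)$ either by a dual argument or by composition with the bijection of Theorem~\ref{res}.

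\emph{Existence.} First I would verify each $T_{(Y,P)}$ is a tilting module in $\Mcal$. For $P=\emptyset$, Proposition~\ref{prop:sumofprufer}(2)(d) identifies $\mathbf L\otimes_\Lambda\Lambda_\Ucal$ as the Lukas tilting module over $\Lambda_\Ucal$, which in particular lies in $\Ucal^\perp$ as a $\Lambda$-module; combined with the branch axioms (B1)--(B2) for $Y$ and the homological flatness of $\lambda\colon\Lambda\to\Lambda_\Ucal$, one checks (T1)--(T3) for $T_{(Y,\emptyset)}$. For general $P$, form $\Lambda_{\mathcal V}$ with $\mathcal V$ the union of $\Ucal$ and the cliques in $P$; by Proposition~\ref{prop:sumofprufer}(1), $\Lambda_{\mathcal V}/\Lambda$ is torsion regular and already contains all Pr\"ufer modules from the tubes $\Ucal_x$ with $x\in P$. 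Combining Theorem~\ref{localizingwide}(4) with the mutual $\Ext^1$-orthogonality of $Y$ and the Pr\"ufer summands in ${}^\perp Y$, one then verifies (T1)--(T3) for $T_{(Y,P)}$, and membership in $\Mcal=\Bcal\cap\Ccal$ is immediate from the absence of summands in $\q$ and from $\p\subset{}^\perp T_{(Y,P)}$.

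\emph{Surjectivity.} Given a tilting module $T\in\Mcal$, apply Proposition~\ref{shapeofT} to decompose $T=Y\oplus T'$ with $Y$ the maximal direct summand in $\add\tube$ and $T'$ torsion-free. The wing arrangement of the summands of $Y$, as in \cite{AS2} \S 3, forces $Y$ to be a branch module; let $\Ucal$ be its set of regular composition factors. By Proposition~\ref{downtoLukas}, $T'$ is a torsion-free tilting module over $\Lambda_\Ucal$. Its resolving subcategory $\Scal'$ satisfies the dichotomy of Lemma~\ref{SandT}(4): since $Y$ has absorbed all torsion summands, every ray in $\tube_\Ucal$ either lies entirely in $\Scal'$ or contributes nothing. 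Define $P\subset\mathbb X$ to be the set of $x$ for which the complete clique of $\Ucal_x$ over $\Lambda_\Ucal$ is contained in $\Scal'$. Stripping the Pr\"ufer summands indexed by $P$ and passing to $\Lambda_{\mathcal V}$ reduces $T'$ to a torsion-free tilting $\Lambda_{\mathcal V}$-module without Pr\"ufer summands whose tilting class coincides with $\Bcal_{\mathcal V}$, the torsion class generated by $\p_{\mathcal V}$; by uniqueness in Theorem~\ref{res}, this residual module is equivalent to $\mathbf L\otimes_\Lambda\Lambda_{\mathcal V}$, so $T\sim T_{(Y,P)}$.

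\emph{Cotilting side and injectivity.} For $(1)\leftrightarrow(3)$ the cleanest route is Theorem~\ref{res}: compute $\Scal={}^\perp(T_{(Y,P)}^\perp)\cap\mla$ from Lemma~\ref{SandT}, and then $\Scal^\intercal$ using the Auslander--Reiten formula (Lemma~\ref{Sto}) to identify exactly which adic, Pr\"ufer and generic modules appear as cotilting summands; this matches $\Cogen C_{(Y,P)}$. An alternative is a dual surjectivity argument: Theorem~\ref{classpi} splits any cotilting module in $\Mcal$ into a product of indecomposable pure-injectives of the prescribed types, and one arranges the summands in the dual shape. Injectivity of the assignment is then routine: $Y$ is recovered as the maximal finite-dimensional summand in $\add\tube$ of $T_{(Y,P)}$ (resp.\ of $C_{(Y,P)}$), and $P$ from the tubes contributing Pr\"ufer summands (resp.\ adic summands together with the absent Pr\"ufer summands).

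The main obstacle will be the closing move of the surjectivity argument: showing that a torsion-free tilting $\Lambda_{\mathcal V}$-module without Pr\"ufer summands and with tilting class $\Bcal_{\mathcal V}$ is equivalent to the Lukas tilting module. This uniqueness rests on Theorem~\ref{res} and requires identifying the corresponding resolving subcategory precisely as $\add\p_{\mathcal V}$, which follows by combining Lemma~\ref{SandT}(3)--(4) with the fact that after all localizations no torsion summand can remain.
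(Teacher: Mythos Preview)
Your surjectivity argument has a structural gap in how the Pr\"ufer summands are handled. Proposition~\ref{shapeofT} decomposes $T$ as $X'\oplus\overline{T}$ with $\overline{T}$ torsion-free and $X'$ a direct sum of Pr\"ufer modules \emph{and} modules from $\tube$; it does not give $T=Y\oplus T'$ with $T'$ torsion-free unless $T$ has no Pr\"ufer summands. If you take $T'=\overline{T}$, the Pr\"ufer modules have disappeared: over $\Lambda_\Ucal$ your $T'$ is then a torsion-free tilting module, so by Lemma~\ref{SandT}(4) its resolving class $\Scal'$ contains no complete ray and your recipe forces $P=\emptyset$ in every case. If instead you put the Pr\"ufer summands into $T'$, then $T'$ is not torsion-free, and Proposition~\ref{downtoLukas} does not apply because those Pr\"ufer modules need not lie in $\Ucal^o$ (a Pr\"ufer module from a tube meeting $\Ucal$ admits nonzero maps from $\Ucal$).

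A second issue is the passage to $\Lambda_{\mathcal V}$: since $\mathcal V$ contains complete cliques, $\Lambda_{\mathcal V}$ is not concealed canonical, so the notation $\p_{\mathcal V}$, $\Bcal_{\mathcal V}$ and the appeal to Proposition~\ref{prop:sumofprufer}(2)(d) for a ``Lukas module over $\Lambda_{\mathcal V}$'' are not available. Relatedly, for $P\neq\emptyset$ the module $T_{(Y,P)}$ has $\Lambda_{\mathcal V}$ itself as torsion-free summand, not $\mathbf L\otimes_\Lambda\Lambda_{\mathcal V}$, so your concluding identification does not match the target. The paper avoids both problems by localizing at a set $\Ucal'$ that agrees with $\Ucal$ outside $P$ but, on each tube $\Ucal_x$ with $x\in P$, consists of the simple regulars whose ray is \emph{not} in $\tube'$; then $\Lambda_{\Ucal'}$ is concealed canonical, $\Lambda_{\mathcal V}\cong(\Lambda_{\Ucal'})_{\mathcal V'}$, and $M$ is recognized via Theorem~\ref{localizingwide} applied over $\Lambda_{\Ucal'}$. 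The equivalence $T\sim Y\oplus M$ is not obtained by decomposing $T$ but by verifying the technical conditions (i)--(iii) adapted from \cite[Prop.~5.2]{AS2}, which is where the real work lies; your outline skips this step entirely.

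For $(1)\leftrightarrow(3)$, note that Theorem~\ref{res} relates tilting right modules to cotilting \emph{left} modules, so one must pass through branch left modules $Y'=D(Y)$ and then identify $D(T_{(Y',P)})$; the explicit shape of $D(M)$ requires Theorem~\ref{classpi} together with the Ext-orthogonality of Pr\"ufer and adic modules from different tubes and an argument that the relevant Pr\"ufer modules actually lie in $\Prod C$.
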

\begin{proof}
  The proof of the bijection between (1) and (2) works as in
  \cite[Theorem 5.6]{AS2}, we give an outline and point out the
  arguments that have to be modified.

  Given a tilting module $T$ in $\Mcal$, let $Y$ be the branch module
  obtained as direct sum of a complete irredundant set of finite
  dimensional indecomposable summands of $T$, and let $\mathcal U$ be
  the set of regular composition factors of $Y$.

  According to Lemma \ref{SandT}, there is a subset
  $\tube'\subset\tube$ such that
  $\Scal={}^\perp(T^\perp)\cap\mla=\add(\p\cup\tube')$.  Moreover, the
  ray of a simple regular module $S$ is completely contained in
  $\tube'$ if and only if the corresponding Pr\"ufer module $S_\infty$
  is a direct summand of $T$.

\smallskip

{\it The first part of the proof}  now consists in showing that 
\begin{enumerate}
\item[(i)] If $\tube'$ contains no complete ray from a tube in
  $\tube$, then $T$ is equivalent to a tilting module of the form
  $T_{(Y,\emptyset)}=Y\oplus ({\mathbf L}\otimes
  \Lambda_\mathcal{U})$.
\item[(ii)] If $\tube'$ contains some rays, then $T$ is equivalent to
  a tilting module of the form $T_{(Y,P)}$ where
  $P=\{x\in\mathbb X\mid \tube'\text{ has rays from }\Ucal_x\}$.
\end{enumerate}
More precisely, one shows that $T$ is equivalent to a tilting module
of the form $T'=Y\oplus M$, where $M$ is a tilting module over the
universal localization at a suitable set of simple regular modules
$\Ucal'$ that contains no complete clique, and further, $M$ is chosen
such that the corresponding resolving subcategory of $\mla_{\Ucal'}$
is the localization
$\Scal\otimes\Lambda_{\Ucal'}=\{A\otimes_\Lambda \Lambda_{\Ucal'}\mid
A\in\Scal\}$ of $\Scal$.

In case (i), $\tube'$ is contained in the extension closure of $\Ucal$
by Lemma \ref{SandT}. We take $\Ucal'=\Ucal$, hence
$\Scal\otimes\Lambda_{\Ucal'}=\add\p_\Ucal$, and $M$ is the Lukas
module over $\Lambda_\Ucal$, that is
$M=\mathbf L\otimes_\Lambda\Lambda_\Ucal$ by Proposition
\ref{prop:sumofprufer}. So $T'=T_{(Y,\emptyset)}$.

In case (ii), $\tube'$ consists of the rays it contains (from tubes
$\Ucal_x, x\in P$) and of a subset of the extension closure of
$\Ucal$. Take $\mathcal U'$ as follows:
$\Ucal'\cap\Ucal_x=\Ucal\cap\Ucal_x$ if $x\not\in P$, while if
$x\in P$, then $\Ucal'\cap\Ucal_x$ consists of the simple regular
modules whose ray is not completely contained in $\tube'$.
Further, let $\mathcal{V}$ be the union of $\Ucal'$ (or equivalently,
of $\Ucal$) with the cliques of the tubes $\Ucal_x, x\in P$. Then the
localization $\tube'\otimes\Lambda_{\Ucal'}$ of $\tube'$ at $\Ucal'$
coincides with the localization of the tubes $\Ucal_x,x\in P$, and it
is given by the set of simple regular $\Lambda_{\Ucal'}$-modules
$\mathcal V'=\{S\otimes \Lambda_{\Ucal'}\,\mid\,S\in\mathcal
V\setminus\Ucal'\}$
corresponding to the simple regular $\Lambda$-modules $S$ whose ray is
contained in $\tube'$. This shows that $\Scal\otimes\Lambda_{\Ucal'}$
is the additive closure of $\p_{\Ucal'}$ and a union of tubes, and the
corresponding tilting class is
$\{X\in\Mla_{\Ucal'}\mid\Ext^1_{\Lambda_{\Ucal'}}(V',X)=0\text{ for
  all } V'\in\mathcal V'\}$.
Applying Theorem \ref{localizingwide} on the canonical algebra
$\Lambda_{\Ucal'}$, and keeping in mind that
$(\Lambda_{\Ucal'})_{\mathcal V'}\cong \Lambda_{\mathcal V}$ by
\cite[4.6]{Scho}, we conclude that we can take
$M=\Lambda_{\mathcal V}\oplus \Lambda_\mathcal V/\Lambda_{\Ucal'}$.
Furthermore one infers from Proposition \ref{prop:sumofprufer} that
$\Lambda_\mathcal V/\Lambda_{\Ucal'}\cong\bigoplus\{S_\infty\mid
S\in\mathcal V\setminus\mathcal U'\}$,
which by definition of $\mathcal V$ and $\Ucal'$ is isomorphic to
$\bigoplus\{\text{all } S_\infty \text{ in } {}^\perp Y \text{ from
  tubes } \Ucal_x, {x\in P}\}$. Hence $T'= T_{(Y,P)}$.
  
In order to prove that $T$ is actually equivalent to the tilting
module $T'=Y\oplus M$ with $M$ as explained above, one proceeds as in
the proof of \cite[Propositions 5.4 and 5.5]{AS2} by verifying
  \begin{enumerate}
  \item[(i)] if $\mathcal W$ is the extension closure of $\Ucal'$,
    then $\mathcal W\cup\mathcal W^o$ contains $\tube'$,
 \item[(ii)] $\Add(\tube\cap\Gen T)\subset M^\perp$,
 \item[(iii)] every torsion-free module in $\Prod\tube\cap\Gen T$ is
   contained in $\mathcal U'\,^\perp$.
\end{enumerate}   
These conditions are an adapted version of \cite[Proposition
5.2]{AS2}, where (iii) has been modified in view of the classification
of pure-injectives in Theorem \ref{classpi}.  The proof of
\cite[Proposition 5.2]{AS2} works also in this context. In fact, we
only have to change the argument for checking that the module $M$ is
in $T^\perp$ on \cite[page 31, lines 23-26]{AS2}: in order to verify
that every $A\in\Scal\cap\Ucal ^o$ belongs to ${}^\perp M$, we use the
exact sequence
$0\to A\to A\otimes_\Lambda \Lambda_\Ucal\to A\otimes_\Lambda
\Lambda_\Ucal/\Lambda\to 0$
from Proposition \ref{prop:sumofprufer} where
$A\otimes_\Lambda \Lambda_\Ucal/\Lambda$ has projective dimension
one. Then $A\in{}^\perp M$ whenever
$A\otimes_\Lambda \Lambda_\Ucal\in{}^\perp M$, and the latter holds
true by choice of $M$.  Notice that \cite[Proposition 5.2]{AS2} relies
on \cite[Lemma 5.1]{AS2}. For proving the latter in our context, it
remains to explain why condition (T1) is verified: this follows from
the assumption $T^\perp\subset M^\perp$, which yields that $M$ belongs
to the class $ {}^\perp( T^\perp)$ consisting of modules of projective
dimension at most one.
 
 \smallskip
 
 {\it The second part of the proof} is devoted to establishing the
 stated bijection. Given a pair $(Y,P)$ as in the Theorem, one
 proceeds as in the proof of \cite[Theorem 5.6]{AS2} to construct a
 tilting module $T\in\Mcal$ which, according to the first part of the
 proof, must be equivalent to $T_{(Y,P)}$. So the assignment
 $(Y,P)\mapsto T_{(Y,P)}$ is well defined and surjective. For the
 injectivity, one uses Proposition \ref{shapeofT} to see that the
 equivalence class of $T_{(Y,P)}$ determines the torsion part of the
 tilting module and thus the pair $(Y,P)$.

\smallskip

{\it The last part of the proof} is devoted to the bijection between
(1) and (3). First of all, since the dual of a branch right module $Y$
is a branch left module $Y'=D(Y)$, we also have a bijection between
the pairs in (1) and the equivalence classes of left tilting
$\Lambda$-modules in the corresponding class $_\Lambda\Mcal$ in
$\LMla$.  But we know from Theorem \ref{res} that the duality $D$
yields a bijection between left tilting and right cotilting
$\Lambda$-modules, and it is clear that $T\in{}_\Lambda\Mcal$ if and
only if $D(T)$ is in $\Mcal$. So, we obtain also a bijection between
(1) and (3).

It only remains to verify that the duals of the left tilting modules
$T_{(Y',P)}$ are of the form $C_{(Y,P)}$ with $Y=D(Y')$, up to
equivalence. Certainly, $C=D(T_{(Y',P)})$ is a cotilting right module
isomorphic to
$$Y\oplus \prod\{\text{all } S_{-\infty} \text{ in } Y^\perp\text{
  from tubes } \Ucal_x, {x\in P} \}\oplus D(M)$$
where $M\in{}_\Lambda\Mcal$ is the torsion-free part of
$T_{(Y',P)}$. Then $D(M)\in\Mcal$ is divisible and pure-injective, and
by Theorem \ref{classpi} it is a direct sum of Pr\"ufer modules and
copies of $G$.  Of course, the Pr\"ufer modules occurring in $\Prod C$
must lie in ${}^\perp Y$. Moreover, they must belong to tubes
$\Ucal_x,x\not\in P$, because one can show as in \cite[2.7]{BK} that
$\Ext^1_\Lambda(S_\infty,S'_{-\infty})=0$ if and only if and $S$ and
$S'$ do not belong to the same clique.

Conversely, any such Pr\"ufer module $S_\infty$ is Ext-orthogonal to
$Y$, to all adic modules $S_{-\infty}$ from tubes $\Ucal_x, {x\in P},$
to $G$ and to any Pr\"ufer module, and therefore it lies in
${}^\perp C=\Cogen C$. We claim that it even belongs to $\Prod C$.

In fact, since $C\in\C={}^\perp\q$, we have $\Cogen C\subset\C$ and
therefore $\Cogen C\cap\mla\subset\add(\p\cup\tube)$. As in the proof
of \cite[Theorem A.1]{AS2}, we infer that all pure-injective divisible
modules, hence in particular $G$ and the Pr\"ufer modules, belong to
$({}^\perp C)^\perp$. So the Pr\"ufer modules $S_\infty\in\Cogen C$
belong to ${}^\perp C\cap ({}^\perp C)^\perp=\Prod C$. As for the
generic module, recall that $G$ is torsion-free and thus
$G\in\varinjlim\p$. Then using that $C\in\Bcal=\p^\perp$ and that
${}^\perp C$ is closed under direct limits, we deduce that
$G\in{}^\perp C$ and therefore also $G\in\Prod\,C$.

So, we conclude that, up to multiplicities, $D(M)\cong G
\oplus \bigoplus\{\text{all } S_\infty \text{ in } {}^\perp Y \text{ from tubes }
    \Ucal_x, {x\not\in P}\}$ and thus $C$ is equivalent to $C_{(Y,P)}$.
\end{proof}

\medskip

\subsection{\sc Gabriel localizations of  $\Acal$.}
We now turn to the relationship with Gabriel localization. Recall that
$\Acal=\Qcoh\XX$ is a locally noetherian hereditary Grothendieck
category, with
${\rm Inj}{\mathcal A}=\{G[1]\}\cup\{S_\infty[1]\mid S\text{ simple
  regular}\}$
being the set of indecomposable injective objects in $\mathcal A$. We
consider the {\it Gabriel topology} on ${\rm Inj}{\mathcal A}$ with
closed sets
$$I(\Serre)=\{E\in {\rm Inj}{\mathcal A}\,\mid\,
\Hom_\Acal(C,E)=0\;\text{for\ all}\;C\in\Serre\}$$
where $\Serre$ runs through the {Serre subcategories} of $\fpA$. The
torsion pair $(\Tcal,\Fcal)$ cogenerated by $I(\Serre)$ coincides with
the one generated by $\Serre$ and is a {hereditary} torsion pair with
$\Tcal=\varinjlim \Serre$. The assignments $\Serre\mapsto I(\Serre)$,
and $\Serre\mapsto(\Tcal,\Fcal)$ are part of the following
correspondence.
 
\begin{thm}(\cite{Ga},\cite[2.8 and 3.8]{He},\cite{K},\cite[Ch. 11]{P}) \label{Gabriel}
There is a bijection between 
\begin{enumerate}\item 
the hereditary torsion pairs  in $\mathcal A$,
\item the closed subsets of ${\rm Inj}{\mathcal A}$,\item 
and the Serre subcategories of $\fpA$,
\end{enumerate}
which assigns  to a hereditary torsion pair $(\Tcal, \Fcal)$ 
the closed subset $\Ical={\rm Inj}{\mathcal A}\cap\Fcal$ and 
the Serre subcategory $\Serre={\rm fp}{\Acal}\cap\Tcal$.
\end{thm}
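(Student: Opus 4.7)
The plan is to construct the two correspondences separately, exploiting that $\Acal = \Qcoh\XX$ is a locally noetherian hereditary Grothendieck category.

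First I would establish the bijection between Serre subcategories $\Serre \subset \fpA$ and hereditary torsion pairs $(\Tcal,\Fcal)$ in $\Acal$. Given $\Serre$, set $\Tcal = \varinjlim \Serre$ and $\Fcal = \Tcal^o$. Since $\Acal$ is locally noetherian, every object is a directed union of its fp-subobjects; this should let $\Tcal$ inherit closure under subobjects, quotients, and extensions from $\Serre$ (for subobjects: any fp-subobject of an object in $\varinjlim \Serre$ factors through some term of the system and is noetherian, hence lies in $\Serre$). Conversely, given a hereditary torsion pair $(\Tcal,\Fcal)$, I take $\Serre := \Tcal \cap \fpA$, which is a Serre subcategory of $\fpA$, and recover $\Tcal = \varinjlim \Serre$ by the same argument. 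The two assignments are mutually inverse.

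Next I would handle the bijection between hereditary torsion pairs and closed subsets of $\mathrm{Inj}\,\Acal$. The key point is that $\Fcal$ is closed under injective envelopes: for $F \in \Fcal$ the torsion subobject of $E(F)$ meets $F$ trivially, hence vanishes by essentiality. Combined with local noetherianness, which decomposes every injective into indecomposable summands, this shows that $F$ embeds into a product of indecomposable injectives from $\Ical := \mathrm{Inj}\,\Acal \cap \Fcal$, so $\Fcal$ is cogenerated by $\Ical$ and $(\Tcal,\Fcal)$ is recovered from $\Ical$. Conversely, for any $\Ical \subset \mathrm{Inj}\,\Acal$ the torsion pair it cogenerates is hereditary (using that $\Acal$ is hereditary, so subobjects of injectives need not be injective but their essentialities behave well under the torsion radical), and the map $(\Tcal,\Fcal) \mapsto \mathrm{Inj}\,\Acal \cap \Fcal$ is inverse to it precisely on the \emph{closed} subsets.

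Finally, I would match the two correspondences by showing that, for a Serre subcategory $\Serre$ with associated torsion pair $(\Tcal,\Fcal)$, an indecomposable injective $E$ lies in $\Fcal$ if and only if $\Hom_\Acal(S,E) = 0$ for every $S \in \Serre$. The nontrivial direction uses that a morphism $f: M \to E$ with $M = \varinjlim M_i \in \Tcal$ and $M_i \in \Serre$ is determined by its restrictions $f_i: M_i \to E$, which vanish by hypothesis, forcing $f = 0$ via the isomorphism $\Hom_\Acal(\varinjlim M_i, E) \cong \varprojlim \Hom_\Acal(M_i, E)$. This gives $\mathrm{Inj}\,\Acal \cap \Fcal = I(\Serre)$. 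The main obstacle I expect is precisely this compatibility check, because it has to be threaded with the locally noetherian decomposition of injectives and the hereditariness of $(\Tcal,\Fcal)$; once it is in place, the three classes are pairwise identified via mutually inverse assignments.
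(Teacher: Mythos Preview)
The paper does not supply its own proof of this theorem; it is quoted as a classical result with references to Gabriel, Herzog, Krause, and Prest. Your outline is essentially the standard argument for locally noetherian Grothendieck categories and is correct in overall shape.

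One point deserves sharpening. When you claim that the torsion pair cogenerated by a subset $\Ical\subset{\rm Inj}\,\Acal$ is hereditary, you invoke the hereditariness of $\Acal$, but that is not the relevant fact. The torsion class is $\Tcal={}^o\Ical$, and it is closed under subobjects simply because each $E\in\Ical$ is injective: if $Y\subset X$ with $X\in\Tcal$, any morphism $Y\to E$ extends to $X\to E$, which is zero. No hypothesis on $\Ext^2$ is needed here. The hereditariness of $\Acal$ does enter elsewhere in the paper (e.g.\ in identifying the quotient category with a perpendicular category), but not in this particular step.
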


If a Serre subcategory $\Serre\subset\fpA$ contains an indecomposable
vector bundle, then it contains all vector bundles, and therefore the
corresponding torsion theory $(\Tcal, \Fcal)=(\Acal, 0)$
is trivial, cf.~\cite[9.2]{GL}. So the hereditary torsion pairs
$(\T,\F)$ in $\mathcal A$ with non-trivial $\Fcal$ correspond to the
Serre subcategories consisting of finite length objects. Then
$\text{vect}\mathbb X\subset\F$, or equivalently,
$\T\subset{}^o(\text{vect}\mathbb X)$, and $(\Tcal,\Fcal)$ is faithful
since $\text{vect}\mathbb X$ contains a set of generators of $\Acal$.

Recall that the category of finite length objects in $\Acal$ is given
by the tubular family
$\mathcal H_0=\tube[1]=\bigcup_{x\in\mathbb X} \Ucal_x[1].$
One easily verifies that the Serre subcategories of $\add\mathcal H_0$
are precisely the small additive closures of unions of tubes and wings
in $\tube[1]$.

\medskip

\begin{cor}\label{onlysur}
  There is a surjective map from the set of equivalence classes of
  cotilting modules in $\Mcal$ to the set of faithful hereditary
  torsion pairs in $\A$. It assigns to the (equivalence class of the)
  cotilting module $C_{(Y,P)}$ the torsion pair $(\Tcal,\Fcal)$ in
  $\A$ cogenerated by the indecomposable injective summands of
  $C_{(Y,P)}[1]$.
\end{cor}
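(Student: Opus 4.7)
The plan is to check well-definedness, verify that the image is a faithful hereditary torsion pair, and finally establish surjectivity through a combinatorial dictionary between Serre subcategories of $\add\tube[1]$ and pairs $(Y, P)$.

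First I would identify the indecomposable injective summands of $C_{(Y,P)}[1]$. By the description of the indecomposable injectives of $\A = \Qcoh\XX$ recalled in \ref{setup} (namely $G[1]$ together with the Pr\"ufer sheaves $S_\infty[1]$), the finite-length summand $Y$ and the adic summands $S_{-\infty}$ of $C_{(Y,P)}$ contribute nothing, leaving the set
$$\mathcal I_{(Y, P)} := \{G[1]\} \cup \{S_\infty[1] \mid S \in {}^\perp Y \text{ simple regular in } \Ucal_x,\ x \notin P\}.$$
Well-definedness follows because equivalent cotilting modules have the same $\Prod$-closure, and Theorem~\ref{tiltinginM} parametrizes their equivalence classes by pairs $(Y, P)$. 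To see that the torsion pair $(\Tcal, \Fcal)$ cogenerated by $\mathcal I_{(Y, P)}$ is hereditary, I would invoke Theorem~\ref{Gabriel}; for faithfulness, since $G[1]\in\Fcal\neq 0$, the paragraph following Theorem~\ref{Gabriel} forces the associated Serre subcategory to sit in $\add\tube[1]$, so that $\vect\XX \subset \Fcal$ generates $\A$.

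For surjectivity, given a faithful hereditary torsion pair $(\Tcal, \Fcal)$ with associated Serre subcategory $\Serre\subset\add\tube[1]$ and closed subset $\mathcal I$, I would first argue that $G[1] \in \mathcal I$: since $G \in \varinjlim\p$ while $\Hom_\Lambda(\tube, \p) = 0$, every $T\in\add\tube$ satisfies $\Hom_\A(T[1], G[1]) = \Hom_\Lambda(T, G) = 0$. Next, for each $x\in\XX$ the restriction $\Serre_x := \Serre \cap \add\Ucal_x[1]$ is either the whole tube or the additive closure of a finite disjoint family of wings. I would then set $P := \{x \in \XX \mid \Serre_x = \add\Ucal_x[1]\}$ and, for each $x \notin P$, take $Y_x\in\add\Ucal_x$ to be the branch module whose wings are exactly those appearing in $\Serre_x$; set $Y = \bigoplus_{x\notin P} Y_x$.

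Finally, I would match the Pr\"ufer content of $\mathcal I$ with that of $\mathcal I_{(Y, P)}$. For $x \in P$ the socle inclusion $S \hookrightarrow S_\infty$ immediately excludes $S_\infty[1]$ from $\mathcal I$; for $x \notin P$ the condition $\Hom_\Lambda(T, S_\infty) = 0$ for all $T \in \Serre_x$ should translate, via the wing combinatorics for tubes of concealed canonical algebras, into the requirement $S\in{}^\perp Y_x$. This last wing-theoretic verification, which refines the branch-module bookkeeping already used in Theorem~\ref{tiltinginM}, is where I expect the main technical obstacle to lie.
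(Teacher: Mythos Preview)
Your approach is essentially the paper's: both identify the closed set $\mathcal I_{(Y,P)}$, match it with a Serre subcategory of $\add\tube[1]$, and deduce surjectivity from the fact that every such Serre subcategory is the additive closure of a union of tubes and wings. The paper just writes down the Serre subcategory directly as
\[
\Serre_{(Y,P)}=\add\Bigl(\bigcup_{x\in P}\Ucal_x[1]\cup\bigcup_{i=1}^r\mathcal W_i[1]\Bigr),
\]
where the $\mathcal W_i$ are the wings spanned by the regular composition factors of $\tau^{-}Y$, and then surjectivity is immediate.

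There is one concrete error in your construction. Recall (see the remark just before Theorem~\ref{tiltinginM}) that $S_\infty\in{}^\perp Y$ if and only if $S$ is \emph{not} a regular composition factor of $\tau^{-}Y$. Hence the wings recorded by $\Serre_{(Y,P)}$ are those of $\tau^{-}Y$, not of $Y$. If you choose $Y_x$ so that its own wings agree with $\Serre_x$, then $\mathcal I_{(Y,P)}$ excludes the Pr\"ufer sheaves $S_\infty[1]$ with $S$ in the wings of $\tau^{-}Y_x$, and you recover $\tau^{-}\Serre_x$ rather than $\Serre_x$. (Concretely: in a rank-$3$ tube with $\Serre_x=\add\{S[1]\}$, your choice $Y_x=S$ yields $\Serre_{(Y_x,\cdot)}\cap\Ucal_x[1]=\add\{\tau^{-}S[1]\}$.) The fix is easy: take $Y_x$ to be a branch module supported on the $\tau$-shifts of the wings in $\Serre_x$, so that $\tau^{-}Y_x$ has exactly the desired wings. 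Also, in your last displayed condition you want $S_\infty\in{}^\perp Y_x$, not $S\in{}^\perp Y_x$; these are different conditions on the simple regular $S$. Finally, note that there is no \emph{unique} branch module with prescribed wings --- any choice works, which is precisely why the map in the corollary is only surjective and not injective (as the paper remarks immediately after the proof).
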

\begin{proof}
  Let
  $I=\{G[1]\}\cup\{S_\infty[1]\mid S_\infty\text{ in } {}^\perp Y
  \text{ from } \Ucal_x, {x\not\in P}\}\}\subset{\rm Inj}{\mathcal A}$
  be the set of indecomposable injective summands of
  $C_{(Y,P)}[1]$. It is easy to see that $I=I(\Serre)$ for the Serre
  subcategory
  $\Serre =\Serre_{(Y,P)} =\add(\bigcup_{x\in P}
  \Ucal_x[1]\cup\bigcup_{i=1}^{r} \mathcal W_i[1])$
  where $\mathcal W_1,\ldots, \mathcal W_r$ are the wings defined by
  the regular composition factors of $\tau^{-}Y$.  Since all Serre
  subcategories of $\add\mathcal H_0$ have this form, the assignment
  $(Y,P)\mapsto\Serre_{(Y,P)}$ is surjective. Now Theorems
  \ref{tiltinginM} and \ref{Gabriel} yield the statement.
 \end{proof}
 
 Clearly this map is not injective in general, because different
 branch modules can give rise to the same wings. Moreover, we point
 out that $(\Tcal,\Fcal)$ need not coincide with the torsion pair
 given by the torsion-free class $\Cogen C_{(Y,P)}[1]$. For example,
 if $\Ucal_x$ is a tube of rank 3, $S$ is a simple regular in
 $\Ucal_x$ and $S'=\tau^-S$, then $Y=S_2\oplus S'$ is a branch module
 and
 $C=C_{(Y,\emptyset)}=G\oplus Y\oplus S_\infty\oplus
 \bigoplus(\text{all Pr\"ufer modules from the other tubes})$
 is a cotilting module that gives rise to a non-hereditary torsion
 pair in $\A$. In fact, $\Cogen C[1]$ is not closed under injective
 envelopes since it does not contain the injective envelope
 $S'_\infty[1]$ of $S'[1]$.

\smallskip
 
In the homogeneous case, however, the parametrization in Theorem
\ref{tiltinginM} reduces to the subsets $P\subset\mathbb X$. So,
denoting by $\Lambda_P$ the universal localization at the cliques of
the tubes $\Ucal_x,x\in P,$ we have tilting modules
$T_P=\Lambda_P\oplus \Lambda_P/\Lambda$ when $P\not=\emptyset$, and
$T_\emptyset=\mathbf L$, as well as cotilting modules
$C_P=G\oplus \prod\{\text{all } S_{-\infty} \text{ from } \Ucal_x,
x\in P\}\oplus\bigoplus\{\text{all } S_\infty \text{ from } \Ucal_x,
x\not\in P\}$.
We then obtain a similar classification result as for commutative
noetherian rings, cf.~\cite{APST}.

\begin{cor}\label{Kronecker}
 Assume that $\Lambda$ is a tame bimodule algebra.  There is a bijection between 
\begin{enumerate}
\item the subsets of $\mathbb X$,
\item the equivalence  classes of tilting modules in $\Mcal$,
\item the equivalence  classes of cotilting modules in $\Mcal$,
\item the faithful hereditary torsion pairs in $\mathcal A$.
\end{enumerate}
The bijection assigns to a subset $P\subset\mathbb X$ the tilting
module $T_P$, the cotilting module $C_P$, and the faithful hereditary
torsion pair $(\T_P,\F_P)$ in $\A$ given by
$\T_P=\varinjlim(\bigcup_{x\in P} \Ucal_x[1])$ and
$\F_P=\Cogen C_P[1]$.  When $P\not=\emptyset$, the quotient category
$\A/\Tcal_P$ is equivalent to $\Mla_P$.
\end{cor}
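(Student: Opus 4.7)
The strategy is to reduce everything to the preceding general classification theorems. Since $\Lambda$ is a tame bimodule algebra, every tube $\Ucal_x \subset \tube$ is homogeneous and hence admits no proper nonzero Serre subcategory; in particular there are no non-trivial wings in $\tube$, and the only branch module is $Y = 0$. Theorem \ref{tiltinginM} therefore specialises to a bijection between subsets $P \subset \mathbb X$ and the equivalence classes of tilting (respectively cotilting) modules in $\Mcal$. Unfolding the definitions of $T_{(0,P)}$ and $C_{(0,P)}$ with the help of Proposition \ref{prop:sumofprufer} shows that these coincide with the modules $T_P$ and $C_P$ of the statement, where in particular $T_\emptyset = \mathbf L$. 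This establishes the bijections among (1), (2), and (3).

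For the bijection with (4), I would invoke Theorem \ref{Gabriel} together with the discussion following it: the faithful hereditary torsion pairs in $\A$ are parametrized by the Serre subcategories of $\add \mathcal H_0 = \add\tube[1]$, and because each tube is homogeneous, every such Serre subcategory must have the form $\add(\bigcup_{x \in P}\Ucal_x[1])$ for a unique $P \subset \mathbb X$. The formula $\T_P = \varinjlim(\bigcup_{x \in P}\Ucal_x[1])$ is then immediate from Theorem \ref{Gabriel}. The identification $\F_P = \Cogen C_P[1]$ is obtained by computing the closed set $I(\Serre_P) = \{E \in \mathrm{Inj}\,\A : \Hom_\A(\Serre_P, E) = 0\}$: using the list of indecomposable injectives of $\A$ from Section \ref{setup}, this set equals $\{G[1]\} \cup \{S_\infty[1] : x \not\in P\}$, whose direct sum is precisely $C_P[1]$; the torsion-free class of the corresponding hereditary torsion pair is then the cogenerator class of this injective, namely $\Cogen C_P[1]$. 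As a by-product, the map of Corollary \ref{onlysur} is seen to be bijective in this setting.

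Finally, for the equivalence $\A/\T_P \simeq \Mla_P$ when $P \neq \emptyset$, the plan is to combine the derived equivalence $\mathbb R\Hom_\A(V,-) : \Dercat(\A) \to \Dercat(\Mla)$ from Section \ref{hearts of cotilting} (with $V = \Lambda[1]$) with Theorem \ref{localizingwide}. Under this derived equivalence the simple objects $S_x[1]$ generating $\Serre_P$ correspond to the simple regular $\Lambda$-modules $\mathcal U = \{S_x : x \in P\}$, and because every clique in $\tube$ is a singleton one has $\Lambda_\mathcal U = \Lambda_P$. Since $\T_P$ is localizing, the quotient $\A \to \A/\T_P$ admits a fully faithful right adjoint whose essential image is the subcategory of $\T_P$-closed objects of $\A$; translating the closedness condition across the derived equivalence yields the full subcategory of $\Lambda$-modules lying in $\mathcal U^\wedge = \mathcal U^o \cap \mathcal U^\perp$, which by Theorem \ref{localizingwide}(2) is the essential image of $\lambda_\ast : \Mla_P \to \Mla$. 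This gives $\A/\T_P \simeq \Mla_P$. The main technical obstacle is precisely this last translation: the derived equivalence between $\Dercat(\A)$ and $\Dercat(\Mla)$ does not identify the natural hearts, so matching $\T_P$-closed objects of $\A$ with $\mathcal U$-closed modules in $\Mla$ requires one to carefully track the two t-structures and the Gabriel quotient functor through the equivalence.
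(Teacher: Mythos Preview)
Your reduction of (1)--(3) to Theorem~\ref{tiltinginM} and of (1)--(4) to Theorem~\ref{Gabriel} is exactly the paper's strategy, but there are two genuine gaps.

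First, in your argument for $\F_P=\Cogen C_P[1]$ you claim that the direct sum of $I(\Serre_P)=\{G[1]\}\cup\{S_\infty[1]:x\notin P\}$ is $C_P[1]$. This is false: by definition $C_P=G\oplus\prod_{x\in P}S_{-\infty}\oplus\bigoplus_{x\notin P}S_\infty$, so $C_P[1]$ contains the (non-injective) adic summands $S_{-\infty}[1]$ for $x\in P$. Your general principle correctly gives $\F_P=\Cogen\bigl(\bigoplus I(\Serre_P)\bigr)$, but you still have to check that adjoining the adic summands does not enlarge the cogenerator class, i.e.\ that each $S_{-\infty}[1]$ with $x\in P$ lies in $\F_P$. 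The paper handles this by invoking the fact that $G[1]$ cogenerates all torsion-free objects in $\A$ (so in particular the adic sheaves), which forces $\Cogen C_P[1]\subset\Cogen G[1]\subset\F_P$.

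Second, for the equivalence $\A/\T_P\simeq\Mla_P$ you correctly identify the obstacle---matching $\T_P$-closed objects in $\A$ with $\Ucal^\wedge$ in $\Mla$ across two different hearts---but you do not resolve it. The paper's key input, which you are missing, is that for a tame bimodule algebra one has $\Qcal=\Add\q$ (Proposition~\ref{pi}(2)); since no nonzero module in $\q$ can lie in $(\bigcup_{x\in P}\Ucal_x)^o$ when $P\neq\emptyset$, this forces $\Mla_P\subset\Ccal$ and, by the analogous argument in $\A$, $\A/\T_P\subset\Ccal[1]$. On these subcategories the functor $H_V$ restricts to an equivalence, and the $\Hom$/$\Ext$ conditions defining the two perpendicular categories correspond directly via Lemma~\ref{ExtHom}. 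Without invoking $\Qcal=\Add\q$ your translation does not go through: a priori a $\T_P$-closed object of $\A$ could have a nonzero component in $\Qcal$.
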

\begin{proof}
  The bijection between the sets in (1) - (4) follows from Theorem
  \ref{tiltinginM} and the discussion above.  We only have to verify
  that the torsion pair $(\T_P,\F_P)$ in $\A$ cogenerated by the
  indecomposable injective summands of $C_P[1]$ has the stated shape.
  We have already seen in the proof of Corollary \ref{onlysur} that it
  coincides with the torsion pair generated by the Serre subcategory
  $\Serre_{P} =\add(\bigcup_{x\in P} \Ucal_x[1])$, so $\T_P$ looks as
  desired. Clearly, $\F_P\subset\Cogen C_P[1]$.  The reverse inclusion
  follows from the fact that $G[1]$ cogenerates all torsion-free
  objects in $\A$ by \cite[4.1]{RR}.

Further, it is well known (see e.g.\cite{K}) that
$$\A/\Tcal_P=\{X\in\A\mid\Hom_\A(Y,X)=\Ext^1_\A(Y,X)=0\text{ for all } Y\in\Serre_P\},$$
and by Proposition
\ref{localizingwide}
$$\Mla_P=\{X\in\Mla\mid\Hom_\Lambda(Y,X)=\Ext^1_\Lambda(Y,X)=0\text{
  for all } Y\in{\bigcup_{x\in P}} \Ucal_x\}.$$
By assumption, $\Qcal=\Add\q$ and no module from $\q$ can belong to
$(\bigcup_{x\in P} \Ucal_x)^o$, hence $\Mla_P\subset\Ccal$. Similarly,
$\A/\Tcal_P\subset\Ccal[1]$. So, the functor $H_V$ from Section
\ref{hearts of cotilting} yields the desired equivalence.
\end{proof}

We will see below that in the situation of Corollary \ref{Kronecker} a
(co)tilting module belongs to $\Mcal$ if and only if it is large.

\bigskip

\section{Large tilting and cotilting modules}\label{class results}
This Section aims at a classification of the large tilting and
cotilting modules for concealed canonical algebras of domestic or
tubular representation type.

We start out by recalling that cotilting modules over noetherian rings
are determined up to equivalence by their indecomposable summands,
which are known to be pure-injective by \cite{B1}.

\begin{thm}\cite{T}\label{superdec}
  Let $C$ be a cotilting left $R$-module over a left noetherian ring
  $R$. Then $\Prod C$ contains a family of indecomposable modules
  $(M_i)_{i\in I}$ such that
  $C$ is a direct summand in a direct limit of modules in
  $\Prod\{M_i\mid i\in I\}$ and
  ${}^\perp C=\bigcap_{i\in I} {}^\perp M_i$.
\end{thm}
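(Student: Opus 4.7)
The plan is to exploit that, by Bazzoni's theorem \cite{B1}, the cotilting module $C$ is pure-injective and that the cotilting class $\mathcal{D} := {}^\perp C$ is \emph{definable}, i.e.~closed under direct products, direct limits, and pure submodules. Consequently $\Prod C \subset \mathcal{D}$, every module in $\Prod C$ is pure-injective, and the theory of definable classes (Ziegler, Krause, Prest) becomes available.

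First I would let $(M_i)_{i\in I}$ be a set of representatives for the isomorphism classes of indecomposable modules in $\Prod C$; each $M_i$ is automatically pure-injective as a summand of a power of the pure-injective module $C$. The easy inclusion ${}^\perp C\subseteq \bigcap_{i\in I}{}^\perp M_i$ is immediate from $M_i\in\Prod C$. For the reverse inclusion, I would argue that $\mathcal{E}:=\bigcap_{i\in I}{}^\perp M_i$ is again definable (being the right $\Ext^1$-perpendicular of a set of pure-injective modules), and invoke the fact that, over a left noetherian ring, a definable class of modules is determined by the indecomposable pure-injectives it contains (via the corresponding closed subset of the Ziegler spectrum). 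It therefore suffices to show that $\mathcal{E}$ and $\mathcal{D}$ have the same indecomposable pure-injective objects: any indecomposable pure-injective $N\in\mathcal{D}$ is (using the noetherian hypothesis and pure-injectivity) a pure submodule, hence a direct summand, of a power of $C$, so $N\cong M_i$ for some $i$; and conversely, any indecomposable pure-injective $N\in\mathcal{E}$ satisfies $\Ext^1_R(N,M_i)=0$ for all $i$ and by the same recovering argument lies in $\mathcal{D}$.

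For the remaining claim that $C$ is a direct summand of a direct limit of modules in $\Prod\{M_i\mid i\in I\}$, I would take the pure-injective envelope $\widetilde M$ of $\bigoplus_{i\in I}M_i^{(\kappa_i)}$ for sufficiently large multiplicities $\kappa_i$. This $\widetilde M$ is pure-injective, lies in $\mathcal{D}$ (as $\mathcal{D}$ is closed under pure-injective envelopes of its members), and has the same indecomposable pure-injective summands as $C$ with the same multiplicities; a uniqueness argument for cotilting modules over noetherian rings then forces $\Prod\widetilde M=\Prod C$, so in particular $C\in\Prod\widetilde M$. Since $\bigoplus_{i\in I}M_i^{(\kappa_i)}$ is the directed union of its finite partial sums, each of which lies in $\Prod\{M_i\}$, it — and hence its pure-injective envelope $\widetilde M$ up to the pure-injective closure, and ultimately $C$ — is obtained as a direct summand of a direct limit of modules in $\Prod\{M_i\}$.

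The main obstacle will be the indecomposable-pure-injective comparison step, where the left noetherian hypothesis is genuinely used: it guarantees that $\mathcal{D}$ has no ``hidden'' superdecomposable content which escapes detection by the $M_i$, and it is what makes the passage ``indecomposable pure-injective in $\mathcal D$ $\Rightarrow$ direct summand of a power of $C$'' go through. Without it, one could in principle have a superdecomposable pure-injective in $\mathcal{D}$ whose perpendicular class is strictly larger than $\bigcap_i {}^\perp M_i$.
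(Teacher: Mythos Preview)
The paper does not supply its own proof of this statement; it simply refers the reader to \cite[Theorem~3.7]{T} and its proof. Your sketch therefore has to stand on its own merits, and it contains a genuine gap.

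The crucial step ``any indecomposable pure-injective $N\in\mathcal{D}$ is \dots\ a direct summand of a power of $C$, so $N\cong M_i$ for some $i$'' is false. The cotilting class $\mathcal{D}={}^\perp C=\Cogen C$ is typically much larger than $\Prod C={}^\perp C\cap({}^\perp C)^\perp$, and there is no reason an indecomposable pure-injective in $\Cogen C$ should lie in $({}^\perp C)^\perp$. For a concrete instance inside this very paper, take $C=\mathbf W$ as in \S\ref{setup}: every indecomposable $P\in\p$ is finite-dimensional, hence pure-injective, and lies in $\Ccal={}^\perp\mathbf W$; but $P\notin\Prod\mathbf W$ since, for example, $\Ext^1_\Lambda(\tau^{-}P,P)\neq 0$ with $\tau^{-}P\in\Ccal$. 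So the set of indecomposable pure-injectives in $\mathcal D$ is strictly larger than $\{M_i\mid i\in I\}$, and your Ziegler-spectrum comparison collapses. The follow-up clause ``by the same recovering argument lies in $\mathcal D$'' is then unsupported: to pass from $\Ext^1(N,M_i)=0$ for all $i$ to $\Ext^1(N,C)=0$ you would already need the relation between $C$ and the $M_i$ that you are trying to establish. (A secondary issue: your claim that $\mathcal E=\bigcap_i{}^\perp M_i$ is definable is not justified either; pure-injectivity of the $M_i$ gives closure of ${}^\perp M_i$ under direct limits, but not obviously under direct products.)

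Your final paragraph also rests on a misconception: the left noetherian hypothesis does \emph{not} rule out superdecomposable pure-injectives. This paper itself (see \S\ref{piclass} and Corollary~\ref{pirrat}) discusses superdecomposable pure-injective modules over tubular algebras, which are artinian and a fortiori noetherian on both sides. The argument in \cite{T} proceeds differently: one starts from the general Fisher--Ziegler decomposition $C\cong PE\bigl(\bigoplus_i M_i\bigr)\oplus N$ of the pure-injective module $C$, with $N$ possibly superdecomposable, and the noetherian hypothesis is used not to make $N$ vanish but to control it via finite-type arguments (commutation of $\Ext^1_R(X,-)$ with direct limits for finitely presented $X$, and the description of cotilting classes through resolving subcategories of $\mla$ as in Theorem~\ref{res}). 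Your sketch does not supply these ingredients.
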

\begin{proof} see \cite[Theorem 3.7]{T}  and its proof.\end{proof}
In the domestic case, the classification of large tilting modules has already been accomplished.
\begin{thm}\label{classdomestic}
There are  one-one-correspondences between
\begin{enumerate}
\item[(i)] 
the pairs  $(Y,P)$ where $Y$ is a branch module and $P\subset\mathbb X,$
\item[(ii)]
 the equivalence  classes of large tilting $\Lambda$-modules,
\item[(iii)] the equivalence  classes of large cotilting $\Lambda$-modules.
\end{enumerate}
\end{thm}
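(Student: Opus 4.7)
The plan is to reduce the claim to Theorem~\ref{tiltinginM}, which already establishes a bijection between the pairs $(Y,P)$ and the equivalence classes of (co)tilting modules lying in $\Mcal=\Bcal\cap\Ccal$. It is therefore enough to show that in the domestic case a (co)tilting $\Lambda$-module is large if and only if it belongs to $\Mcal$. The ``only if'' direction is immediate: the finite dimensional objects of $\Mcal$ form the category $\add\tube$, and the rank argument recalled just before Proposition~\ref{shapeofT} rules out any finite dimensional (co)tilting module with all its indecomposable summands in $\add\tube$, so every (co)tilting module in $\Mcal$ must be infinite dimensional and hence large.

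For the converse I would treat the cotilting case first. Let $C$ be a large cotilting $\Lambda$-module. Theorem~\ref{superdec} produces indecomposable pure-injective modules $(M_i)_{i\in I}\subset\Prod C$ with ${}^\perp C=\bigcap_i {}^\perp M_i$ such that $C$ is a direct summand of a direct limit of products of the $M_i$. By Proposition~\ref{pi}(2) each $M_i$ is either a finite dimensional indecomposable, a Pr\"ufer, an adic, or the generic module. If all were finite dimensional only finitely many could be pairwise non-isomorphic and $C$ would be equivalent to their finite direct sum, contradicting largeness; so some $M_j$ is infinite dimensional. The key step is then to invoke Lemma~\ref{0} to exclude any summand of $C$ in $\p\cup\q$: if $P\in\p$ were such a summand, then $M_j\in\Prod C\subset\Cogen C={}^\perp C$ would yield $\Ext^1(M_j,P)=0$, contradicting Lemma~\ref{0}; the $\q$-case is dual. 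Consequently every $M_i$ lies in $\tube$ or is a Pr\"ufer, adic, or generic module, and each of these belongs to $\Mcal$ via $\tube\subset\Mcal$ together with $\Add\mathbf{W}\subset\Mcal$ and $\Prod\tube\subset\Mcal$ (using the well-known vanishing $\Hom(G,\p)=0$ to place $G$ in $\Bcal$). Since $\Bcal$ is a torsion class and $\Ccal=\Cogen\mathbf{W}$ is definable—because $\mathbf{W}$ is $\Sigma$-pure-injective (Section~\ref{setup})—the intersection $\Mcal$ is closed under products, direct summands, and direct limits, whence the structural description of $C$ via the $M_i$ gives $C\in\Mcal$.

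The tilting case then follows by duality through Theorem~\ref{res}: a large tilting right $\Lambda$-module $T$ corresponds to a large cotilting left $\Lambda$-module $D(T)$, which by the cotilting argument above applied to $\Lambda^{\mathrm{op}}$ lies in the analogous left-handed class ${}_\Lambda\Mcal$; since $D$ interchanges the preprojective and preinjective parts of the canonical trisection, it identifies ${}_\Lambda\Mcal$ with $\Mcal$, giving $T\in\Mcal$. Applying Theorem~\ref{tiltinginM} then yields all three bijections. The main obstacle is the Lemma~\ref{0} step: exactly this Ext-orthogonality between the infinite dimensional Pr\"ufer/adic/generic summand of $C$ and the elements of $\p\cup\q$ is what forces a large cotilting module into the ``central part'' $\Mcal$ of $\Mla$, and the rest of the argument is organized around this observation.
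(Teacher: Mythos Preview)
Your argument is essentially the paper's: reduce to Theorem~\ref{tiltinginM} by showing that every large (co)tilting module lies in $\Mcal$, using Theorem~\ref{superdec}, Proposition~\ref{pi}, Lemma~\ref{0}, and definability of $\Mcal$. Two small points to tighten: apply Lemma~\ref{0} to the $M_i$ themselves rather than to ``summands of $C$'' (the $M_i$ lie in $\Prod C$ but need not be summands of $C$; use instead that ${}^\perp C=\bigcap_i{}^\perp M_i$ together with $M_j\in\Prod C\subset{}^\perp C$ to get $\Ext^1_\Lambda(M_j,M_i)=0=\Ext^1_\Lambda(M_i,M_j)$), and note that closure of $\Bcal=\p^\perp$ under products follows from $\p\subset\mla$ (or from $\Bcal$ being a tilting class, hence definable), not from the torsion-class property alone.
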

\begin{proof} The statement will follow from Theorem \ref{tiltinginM}
  once we prove that all large tilting or cotilting modules are in
  $\Mcal$. Let $T$ be a large tilting right module, and let $C=D(T)$
  be the dual cotilting left module. As in \cite[2.6]{AS2}, we infer
  that $C$ has to be large as well.

  Take a family of pairwise non-isomorphic indecomposable modules
  $(M_i)_{i\in I}$ in $\Prod C$ as in Theorem \ref{superdec}. The
  $M_i$ cannot be all finite dimensional. Indeed, otherwise the
  cardinality of the index set $I$ is bounded by the rank of the
  Grothendieck group, and so the module $M=\prod_{i\in I} M_i$ is
  finite dimensional, and $\Add M=\Prod M$ is definable. Since $C$ is
  a direct summand of a direct limit of modules in $\Prod M$, we infer
  that $C\in\Add M$ is equivalent to a finite dimensional cotilting
  module, a contradiction.

  So there is $i\in I$ such that $M_i$ is infinite dimensional, and by
  Proposition \ref{pi} it is isomorphic to a Pr\"ufer or adic module,
  or to the generic module. Now Lemma \ref{0} shows that no $M_i$ can
  belong to $\Qcal$ nor to $\Pcal$, hence all $M_i$ are in $\tube$, or
  Pr\"ufer, adic, or generic, and in particular they all belong to
  $\Mcal$. But $\Mcal$ is definable, so also $C$, as a direct summand
  of a direct limit of modules in $\Prod M$, belongs to $\Mcal$. And
  by duality $T$ has to belong to $\Mcal$ as well.
\end{proof}

\medskip

{\it From now on}, we assume that $\Lambda$ is concealed canonical of
tubular type.  The AR-quiver of $\Lambda$ then consists of a
preprojective component $\p_0$, a preinjective component $\q_\infty$
and a countable number of sincere separating tubular families
$\tube_\alpha,\alpha\in\Q_0^\infty=\Q^+\cup\{0,\infty\},$ where
$\tube_\alpha$ is stable precisely when $\alpha\in\Q^+$.  We fix $w\in
\R^+
$
and
set
$$\p_w=\p_0\cup\bigcup_{\alpha<w}\tube_\alpha,\qquad\q_w=\bigcup_{w<\gamma}\tube_\gamma\cup\q_\infty$$ 
For $w\in\Q^+$ we thus obtain a trisection $(\p_w,\tube_w,\q_w)$ as in
Section \ref{setup}, while for $w\in\R^+\setminus\Q^+$ the finitely
generated indecomposable modules all belong either to $\p_w$ or to
$\q_w$.

\smallskip

\subsection{\sc The slope of a module.} \label{slope}
Following \cite[\S 13]{RR}, we now consider torsion pairs in $\Mla$
constructed from the classes above and use them to define a notion of
slope.  We start with the torsion pair cogenerated by $\p_w$. By the
Auslander-Reiten formula $$\mathcal B_w = {}^o(\p_w) = (\p_w)^\perp$$
with $\add\p_w$ being resolving, so it follows from Theorem \ref{res}
 that $\mathcal B_w$ is  a tilting class. 
The corresponding  torsion--free class will be denoted by $\mathcal P_w$. 

We fix a tilting module $\mathbf L_w$ generating $\mathcal
B_w$.
Notice that it must be infinite dimensional: otherwise
$\mathbf L_w\in\mathcal B_w\cap{}^\perp(\mathcal B_w)\cap\mla=\mathcal
B_w\cap\add\p_w={}^o(\p_w)\cap\add\p_w$, which is impossible.

Furthermore, $ \mathcal B_w={}^o(\bigcup_{\alpha<w} \tube_\alpha).$
Indeed, if a module $X$ has a non-zero morphism $X\to P$ for some
$P\in\p_0$, then we consider the injective envelope $f:P\to
I(P)$.
Since all indecomposable injective modules lie in
$\tube_\infty\cup\q_\infty$, we can factor through any tube in any
tubular family $\tube_\alpha$ where $\alpha<w$. This shows that $P$
embeds in a module in $\add\tube_\alpha$, and so we obtain a non-zero
morphism $X\to Y$ with $Y\in\tube_\alpha$.

\smallskip

We now turn to the torsion pair generated by $\q_w$.
By the Auslander-Reiten formula
$$\mathcal C_w = {}(\q_w)^o={}^\perp (\q_w).$$ Since $\q_w$ is dual to
a class $_\Lambda\p_{\tilde w}\subset\lmla$, by the well-known Ext-Tor
formulae and Theorem \ref{res} it follows from
that $\mathcal C_w=(_\Lambda\p_{\tilde w})^\intercal$ is a cotilting
class given by a large cotilting module $\mathbf W_w$.  Dually,
$ \mathcal C_w=(\bigcup_{w<\gamma} \tube_\gamma)^o.$
The corresponding torsion class is  $\mathcal Q_w=\Gen\q_w$, and
 for $w\in\Q\cup\{0,\infty\}$ 
 the torsion pair $(\mathcal Q_w,\mathcal C_w)$ is split, see \cite[13.1]{RR}. 

\smallskip

Finally, let
$\p_\infty=\p_0\,\cup\,\bigcup_{\alpha<\infty}\tube_\alpha$ and
$\q_0=\bigcup_{0<\gamma}\tube_\gamma\,\cup\,\q_\infty$, and set as
above $\Bcal_w={}^o(\p_w)$, and $\C_w=(\q_w)^o$ for $w=0$ or
$w=\infty$.  According to \cite{RR}, the modules belonging to the
class $$\mathcal M_w=\mathcal C_w\cap\mathcal B_w$$ with
$w\in\R^+\cup\{0,\infty\}$ will be said to have {\it slope} $w$.

\smallskip

\subsection{\sc Rational slope}  \label{rational}
When $w\in\Q^+$, we are in the situation of Sections \ref{setup} and
\ref{ARF}.  So there is a tilting and cotilting module $\mathbf W_w$
which cogenerates $\mathcal C_w$ and generates the torsion class
$\mathcal D_w={}^o(\tube_w)=(\tube_w)^\perp$ with corresponding split
torsion pair $(\mathcal D_w,\mathcal R_w)$.  The module $\mathbf W_w$
can be chosen as the direct sum of a set of representatives of the
Pr\"ufer modules and the generic module $G_w$ from the family
$\tube_w$.

Furthermore,
$ \mathcal D_w={}^o(\bigcup_{\alpha\le w} \tube_\alpha)={}^o(\p_w\cup
\tube_w).$
Indeed, if a module $X$ has a non-zero morphism $X\to P$ for some
$P\in\p_w$, then as shown above we can assume that $P\in\tube_\alpha$
with $\alpha<w$. But since the modules in $\tube_\alpha$ are
cogenerated by $\tube_w$, we infer that there is a non-zero morphism
$X\to Y$ with $Y\in\tube_w$.

Similarly, the torsion pair $(\Gen \tube_w,\mathcal F_w)$ generated by
$\tube_w$ satisfies
$\mathcal F_w=(\bigcup_{w\le\gamma}
\tube_\gamma)^o=(\tube_w\cup\q_w)^o.$

\smallskip

We can now consider the t-structure induced by the torsion pair
$(\Gen\q_w,\mathcal C_w)$ in \Db. Its heart by $\mathcal A_w$ is
equivalent to the category $\Qcoh\XX_w$ of quasi-coherent sheaves over
a noncommutative curve of genus zero $\mathbb X_w$ parametrizing the
family $\tube_w$, which is again of tubular type by
\cite{LP},\cite[8.1.6]{K}.

According to Theorem \ref{tiltinginM}, the tilting and cotilting
modules of slope $w$ are then parametrized by the pairs $(Y,P)$ where
$Y$ is a branch object in $\tube_w$ and $P\subset\mathbb X_w$, and
they are related to the Gabriel localizations of $\Qcoh\XX_w$ as
explained in Corollary \ref{onlysur}.
 
\smallskip

\subsection{\sc Irrational slope} \label{irrational}
Let us first collect some properties of the classes introduced above. 

\begin{lemma} \label{BwCw}
Let $w\in\R^+$. Then \begin{enumerate}
\item
  $\mathcal B_w=\bigcap\limits_{\R^+\ni v< w}\mathcal
  B_v=\bigcap\limits_{\Q^+\ni \alpha< w}\mathcal D_\alpha$,
  \: and\:
  $\mathcal C_w=\bigcap\limits_{w<v\in\R^+}\mathcal
  C_v=\bigcap\limits_{w<\gamma\in\Q^+}\mathcal F_\gamma$.

\smallskip

\item  $\mathcal Q_w=\varinjlim \q_w$, and if $w\not\in\Q$, then $\mathcal C_w=\varinjlim \p_w$.

\smallskip

\item $\mathcal P_w\subset\mathcal C_w$ and
  $\mathcal Q_w\subset\mathcal B_w$.  If $w\in\Q^+$, then
  $\mathcal P_w\subset\mathcal F_w\subset\mathcal C_w$ and
  $\mathcal Q_w\subset\mathcal D_w\subset\mathcal B_w$.
 
 \smallskip
 
\item
  $(\mathcal C_w)^\perp\subset\mathcal B_w=\bigcap\limits_{\R^+\ni v<
    w}(\mathcal C_v)^\perp=\bigcap\limits_{\R^+\ni v< w}\mathcal Q_v$.
  
\end{enumerate}
 \end{lemma}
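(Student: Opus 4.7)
I plan to prove the four parts in order, using each in the next.

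Parts (1)-(3) are essentially bookkeeping from the set-up of Section \ref{slope}. For (1), I would apply ${}^o$ to the set identity $\bigcup_{v<w}\bigcup_{\beta<v}\tube_\beta=\bigcup_{\alpha<w}\tube_\alpha=\bigcup_{\alpha\in\Q^+,\alpha<w}(\p_\alpha\cup\tube_\alpha)$ using the Hom-orthogonal descriptions $\mathcal B_w={}^o(\bigcup_{\alpha<w}\tube_\alpha)$ and $\mathcal D_\alpha={}^o(\p_\alpha\cup\tube_\alpha)$ from Section \ref{slope}; the $\mathcal C_w$-statement is dual. For (2), the identity $\mathcal Q_v=\varinjlim\q_v$ is Crawley-Boevey \cite{CB} (see also \cite[4.5.2]{GT}). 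For the second claim (irrational $w$), I observe that any f.g.~indecomposable in $\mathcal C_w=\q_w^o$ cannot lie in $\q_w$ by its own identity endomorphism, and since there is no tubular family $\tube_w$ at irrational slope, it must lie in $\p_w$; hence $\mathcal C_w\cap\mla=\add\p_w$, and the definability of the cotilting class $\mathcal C_w$ yields $\mathcal C_w=\varinjlim(\mathcal C_w\cap\mla)=\varinjlim\p_w$. Part (3) follows from the separating-type vanishings $\Hom(\q_w,\p_w\cup\tube_w)=0=\Hom(\tube_w,\p_w)$ combined with the closure properties of torsion(-free) classes.

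For (4), I first show $(\mathcal C_w)^\perp\subset\mathcal B_w$ via the Auslander-Reiten formula. Since $\p_w\subset\mathcal C_w$ and $\p_w$ contains no injective modules in the tubular case, each indecomposable $P\in\p_w$ equals $\tau(\tau^{-1}P)$ with $\tau^{-1}P\in\p_w\subset\mathcal C_w$ of projective dimension at most one by Lemma \ref{5.4} (applied at slope $w$). Lemma~\ref{Sto}(1) then gives $\Hom(X,P)\cong D\Ext^1_\Lambda(\tau^{-1}P,X)=0$ for every $X\in(\mathcal C_w)^\perp$, hence $X\in{}^o\p_w=\mathcal B_w$. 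Applying this at each $v<w$ and combining with part (1) yields $\bigcap_{v<w}(\mathcal C_v)^\perp\subset\bigcap_{v<w}\mathcal B_v=\mathcal B_w$.

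The core step is $\mathcal B_w\subset\bigcap_{v<w}\mathcal Q_v$. Fix $v<w$ and first suppose $v$ is rational. The split torsion pair $(\mathcal Q_v,\mathcal C_v)$ decomposes $X\in\mathcal B_w$ as $X=X_t\oplus X_f$ with $X_t\in\mathcal Q_v$ and $X_f\in\mathcal C_v$; closure of $\mathcal B_w$ under direct summands gives $X_f\in\mathcal C_v\cap\mathcal B_w$. For any rational $v'\in(v,w)$, the monotonicities $\q_v\supset\q_{v'}$ (hence $\mathcal C_v\subset\mathcal C_{v'}$) and $\p_{v'}\cup\tube_{v'}\subset\p_w$ (hence $\mathcal B_w\subset\mathcal D_{v'}$) yield $X_f\in\mathcal C_{v'}\cap\mathcal D_{v'}$, which equals $\Add\mathbf W_{v'}=\Prod\mathbf W_{v'}$ by \cite[10.1]{RR} applied at slope $v'$. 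Taking two distinct rationals $v',v''\in(v,w)$ gives $X_f\in\Add\mathbf W_{v'}\cap\Add\mathbf W_{v''}$; the indecomposable summands of $\mathbf W_{v'}$ (Pr\"ufer modules over $\tube_{v'}$ and the generic $G_{v'}$) and those of $\mathbf W_{v''}$ are pairwise non-isomorphic since they belong to different tubular families and slopes, so this intersection reduces to $\{0\}$. Hence $X_f=0$ and $X=X_t\in\mathcal Q_v$. For $v$ irrational, choose a rational $v'\in(v,w)$; then $X\in\mathcal B_w\subset\mathcal Q_{v'}\subset\mathcal Q_v$ by $\q_{v'}\subset\q_v$.

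Finally, $\bigcap_{v<w}\mathcal Q_v\subset\bigcap_{v<w}(\mathcal C_v)^\perp$ is handled by invoking that cotilting classes are of finite type, so $(\mathcal C_v)^\perp=(\mathcal C_v\cap\mla)^\perp$. Since $\mathcal C_v\cap\mla=\add(\p_v\cup\tube_v)$ (for $v$ rational) or $\add\p_v$ (for $v$ irrational), and each $\tau C$ for a f.g.~indecomposable $C$ in this class lies again in the same class, Lemma~\ref{Sto}(1) gives $\Ext^1_\Lambda(C,X)\cong D\Hom_\Lambda(X,\tau C)=0$ for $X\in\mathcal Q_v\subset\mathcal D_v\cap\mathcal B_v$. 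The three classes in (4) therefore coincide. The main obstacle is the Krull--Schmidt argument in the previous paragraph: it rests on the identification $\mathcal C_{v'}\cap\mathcal D_{v'}=\Add\mathbf W_{v'}=\Prod\mathbf W_{v'}$, which collapses the class of pure-injective modules supported at slope $v'$ to the direct sum closure of $\mathbf W_{v'}$, and on the disjointness of its indecomposable summands across distinct rational slopes.
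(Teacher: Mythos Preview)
Your treatment of (1)--(3) matches the paper's proof; the only difference is presentation.

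For (4), however, the paper takes a far shorter route. It simply invokes the identity $\mathcal D_\alpha=(\mathcal C_\alpha)^\perp$ for rational $\alpha$ from \cite[\S 14]{RR}. Once this is granted, everything is immediate: $\bigcap_{v<w}(\mathcal C_v)^\perp\subset\bigcap_{\alpha<w}\mathcal D_\alpha=\mathcal B_w$ by (1); conversely, for $B\in\mathcal B_w$, $C\in\mathcal C_v$ and any rational $\alpha\in(v,w)$ one has $C\in\mathcal C_\alpha$ and $B\in\mathcal D_\alpha=(\mathcal C_\alpha)^\perp$, so $\Ext^1(C,B)=0$. The equality with $\bigcap_v\mathcal Q_v$ is cited from \cite[13.4]{RR}, and $(\mathcal C_w)^\perp\subset\bigcap_v(\mathcal C_v)^\perp$ follows from $\mathcal C_v\subset\mathcal C_w$.

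Your approach instead reproves these facts from scratch. The Krull--Schmidt argument for $\mathcal B_w\subset\bigcap_v\mathcal Q_v$ is correct and rather elegant, though considerably longer than the paper's citation. But your final step has a genuine gap: the sentence ``cotilting classes are of finite type, so $(\mathcal C_v)^\perp=(\mathcal C_v\cap\mla)^\perp$'' does not hold up. Finite type is a statement about \emph{tilting} classes (that $T^\perp=\Scal^\perp$ for $\Scal={}^\perp(T^\perp)\cap\mla$), and there is no general principle letting you pass from $\Ext^1(C_i,X)=0$ for finitely presented $C_i$ to $\Ext^1(\varinjlim C_i,X)=0$. What you actually need is the inclusion $\mathcal D_v\subset(\mathcal C_v)^\perp$, i.e.\ exactly the half of \cite[\S 14]{RR} that the paper quotes. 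It can be proved directly: for $X\in\mathcal D_v=\Gen\mathbf W_v$ take an $\Add\mathbf W_v$-presentation $0\to K\to\mathbf W_v^{(I)}\to X\to 0$ with $K\in\Gen\mathbf W_v$; then for $C\in\mathcal C_v={}^\perp\mathbf W_v$ with $\pd C\le 1$ (Lemma~\ref{5.4}) the long exact sequence gives $\Ext^1(C,X)=0$. With this patch your argument goes through, but as written the justification is incorrect.
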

\begin{proof}
  (1) By definition, a module belongs to $\mathcal B_w$ if and only if
  it belongs to
  $ {}^o(\p_0\cup\bigcup_{\alpha<v}\tube_\alpha)=\mathcal B_v$ for all
  ${v< w}$. Moreover, by the description of $\mathcal B_w$ in Section
  \ref{slope}, we have
  $\mathcal
  B_w=\bigcap_{\alpha<w}{}^o(\tube_\alpha)=\bigcap_{\alpha<w} \mathcal
  D_\alpha$,
  cf. also \cite[13.4]{RR}.  The second statement is proven with dual
  arguments.

  (2) The first statement follows from \cite{CB} or \cite[4.5.2]{GT}
  using that $\add\q_w$ is a torsion class in $\mla$. For the second
  statement recall that every module is a direct limit of finitely
  presented modules, so by definition of $\mathcal C_w$ we obtain
  $\subset$. For the reverse inclusion, observe that
  $\p_w\subset\q_w^o=\mathcal C_w$ and $\mathcal C_w$ is closed under
  direct limits.

(3)  follows immediately from the definitions and the separating condition.

(4) For $\alpha\in\Q^+$ we know e.g.~from \cite[\S 14]{RR} that
$\mathcal D_\alpha=(\mathcal C_\alpha)^\perp$, and we infer that
$\bigcap_{\R^+\ni v< w}(\mathcal C_v)^\perp\subset \bigcap_{\Q^+\ni
  \alpha< w}\mathcal D_\alpha=\mathcal B_w$.
For the reverse inclusion, pick $B\in\mathcal B_w$ and
$C\in\mathcal C_v$ where $v\in\R^+$ with $v<w$.  Choose
$\alpha\in\Q^+$ with $v<\alpha<w$. Then $\Ext^1_\Lambda(C,B)=0$
because $C\in\mathcal C_\alpha$ and $B\in\mathcal D_\alpha$. For the
second equality we refer to \cite[13.4]{RR}. Finally, since
$\mathcal C_v\subset \mathcal C_w$ for all $v<w$, we have
$(\mathcal C_w)^\perp\subset\mathcal B_w$.
\end{proof}

We obtain the following description of modules of irrational slope.
\begin{thm}\label{irrational slope} 
Let $w\in\R^+\setminus\Q^+$. 
\begin{enumerate}
\item $\mathbf L_w$ is the only tilting module of slope $w$ up to equivalence.

\item  $\mathbf W_w$ is the only cotilting module of slope $w$ up to equivalence.

\item A module has slope $w$ if and only if it is a pure submodule of
  a product of copies of $\mathbf W_w$, or equivalently, it is a
  pure-epimorphic image of a direct sum of copies of $\mathbf L_w$.
\end{enumerate}
\end{thm}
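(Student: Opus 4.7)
The plan is to prove~(3) first and then deduce (1) and (2) from it. Before tackling (3), I would verify that $\mathbf L_w, \mathbf W_w \in \mathcal M_w$, the non-trivial inclusion being $\mathbf L_w \in \mathcal C_w$. The key point is that, for every $w \in \R^+$, the separating property of the tubular families forces $\Hom_\Lambda(\q_w,\p_w) = 0$: given $P \in \p_w$ (so $P \in \p_0$ or some $\tube_\alpha$ with $\alpha<w$) and $Q \in \q_w$ (so $Q \in \tube_\gamma$ with $\gamma>w$, or $Q \in \q_\infty$), one picks any rational $\delta$ strictly between the slopes of $P$ and $Q$; then $P \in \p_\delta$ and $Q \in \q_\delta$, so $\Hom_\Lambda(Q,P)=0$ by the separation of $\tube_\delta$. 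Hence $\p_w\subset\mathcal C_w$; since $\mathcal C_w$ is a cotilting class (hence closed under extensions and direct limits) and $\mathbf L_w$ is a direct union of $\p_w$-filtered submodules by \cite[2.2]{KT}, we obtain $\mathbf L_w\in \mathcal C_w$. The inclusion $\mathbf W_w\in\mathcal B_w$ follows dually, after transporting to the left-module side via the Ext--Tor formulae and the duality $D$.

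Next, $\mathcal M_w = \mathcal B_w \cap \mathcal C_w$ is a definable subcategory of $\Mla$: $\mathcal C_w$ is definable as a cotilting class over the noetherian algebra $\Lambda$, while $\mathcal B_w$ is definable by Bazzoni's theorem since $\mathbf L_w$ has projective dimension at most one. In particular, $\mathcal M_w$ is closed under pure-injective envelopes, so the pure-injective envelope of any $M \in \mathcal M_w$ decomposes as a direct sum of indecomposable pure-injectives all lying in $\mathcal M_w$.

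The crux of the argument---the part I expect to be the main obstacle---is the classification of the indecomposable pure-injective $\Lambda$-modules of irrational slope: for $w \in \R^+ \setminus \Q^+$, the module $\mathbf W_w$ is, up to isomorphism, the only such module. This rests on the slope calculus of~\cite{RR} together with the classification of indecomposable pure-injectives over tubular algebras due to Ringel, Harland and Prest~\cite{Rac,H,HP}: the indecomposable pure-injectives comprise the finite-dimensional modules, the Pr\"ufer and adic modules from the tubular families $\tube_v$ with $v\in\Q_0^\infty$, and one ``generic'' pure-injective of slope $v$ for every $v\in\R^+$, which for irrational $v$ must coincide with $\mathbf W_v$. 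Granting this, any $M \in \mathcal M_w$ embeds in its pure-injective envelope $\widetilde M \in \Prod\,\mathbf W_w$, yielding the pure embedding $M\hookrightarrow \mathbf W_w^I$. The dual description as a pure epimorphic image of $\mathbf L_w^{(J)}$ follows by the same classification combined with the tilting property of $\mathbf L_w$: the definable hull of $\{\mathbf L_w\}$ has again $\mathbf W_w$ as its unique indecomposable pure-injective, hence coincides with $\mathcal M_w$, and every object of a definable class generated by a tilting module of projective dimension one arises as a pure epimorphic image of a coproduct of copies of that tilting module.

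Parts (1) and (2) then follow from (3). A cotilting module $W'$ of slope $w$ lies in $\mathcal M_w$ and, by~(3), embeds purely in $\mathbf W_w^I$; being $\Sigma$-pure-injective over the noetherian $\Lambda$, this pure embedding splits, so $W' \in \Add\,\mathbf W_w$, giving $W' \sim \mathbf W_w$ and hence~(2). Part~(1) follows via the duality of Theorem~\ref{res}: the assignment $T \mapsto D(T)$ sends a right tilting module of slope $w$ to a left cotilting module of the corresponding irrational slope on the left-module side, whose uniqueness is guaranteed by the left-sided version of~(2); this forces $T \sim \mathbf L_w$.
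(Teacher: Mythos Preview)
Your strategy has two genuine gaps.

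\textbf{The classification you invoke is neither available nor correct.} You base part~(3) on the assertion that $\mathbf W_w$ is, up to isomorphism, the \emph{only} indecomposable pure-injective of slope $w$, citing \cite{Rac,H,HP}. These references do not establish such uniqueness; in fact \cite{HP} exhibits \emph{superdecomposable} pure-injectives of irrational slope (over countable ground fields), so your claim that the pure-injective envelope of $M\in\mathcal M_w$ ``decomposes as a direct sum of indecomposable pure-injectives'' can fail. Moreover, in this paper the description of indecomposable pure-injectives of irrational slope (Theorem~\ref{Ziegler}(3)) is \emph{derived from} Theorem~\ref{irrational slope} via Corollary~\ref{pirrat}, so invoking it here would be circular. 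The paper's proof of~(3) avoids all of this: given $M\in\mathcal M_w$, it takes a special $(\mathcal C_w)^\perp$-preenvelope $0\to M\to C_0\to C_1\to 0$ from \cite{ATT}, observes that both $C_0,C_1$ land in $\mathcal C_w\cap(\mathcal C_w)^\perp=\Prod\mathbf W_w$, and then checks directly that the sequence is pure-exact using $M\in\mathcal B_w=\p_w^\perp$.

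\textbf{The deductions of (1) and (2) from (3) do not work as stated.} You claim that a cotilting $W'$ of slope $w$ is $\Sigma$-pure-injective; this is false---Corollary~\ref{pirrat} in fact proves that $\mathbf W_w$ is \emph{not} $\Sigma$-pure-injective. Bazzoni's theorem only gives pure-injectivity, which suffices to split the pure embedding $W'\hookrightarrow\mathbf W_w^I$ and obtain $W'\in\Prod\mathbf W_w$, but this yields only $\Cogen W'\subset\mathcal C_w$, not equality. The paper instead argues directly: from $W'\in\mathcal B_w$ one gets $\p_w\subset{}^\perp\mathcal B_w\subset{}^\perp W'=\Cogen W'$, and since cotilting classes are closed under direct limits and $\mathcal C_w=\varinjlim\p_w$ for irrational $w$ (Lemma~\ref{BwCw}(2)), equality follows. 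For~(1) the paper computes the resolving subcategory $\mathcal S={}^\perp(T^\perp)\cap\mla=\add\p_w$ directly from $T\in\mathcal M_w$ and concludes by Theorem~\ref{res}; your duality reduction is possible in principle but would still require the direct argument on the left side.
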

\begin{proof}
  By construction,
  $\mathbf L_w\in\mathcal B_w\cap{}^\perp(\mathcal B_w)\subset\mathcal
  B_w\cap\mathcal C_w$,
  and
  $\mathbf W_w\in\mathcal C_w\cap(\mathcal C_w)^\perp\subset\mathcal
  C_w\cap\mathcal B_w$ have slope $w$.

  (1) Let $T$ be a tilting module of slope $w$. As in Lemma
  \ref{SandT}, it follows that the corresponding resolving subcategory
  $\mathcal S={}^\perp(T^\perp)\cap\mla=\add\p_w$, and Theorem
  \ref{res} implies that $T$ is equivalent to $\mathbf L_w$.

  (2) Let now $C$ be a cotilting module of slope $w$. Then
  ${}^\perp\B_w\subset {}^\perp C=\Cogen C\subset \C_w$. Since
  $\p_w\subset {}^\perp \B_w$ and ${}^\perp C$ is closed under direct
  limits, we infer $\varinjlim\p_w\subset {}^\perp C$. Lemma
  \ref{BwCw}(2) now yields $\C_w={}^\perp C$, so $C$ is equivalent to
  $\mathbf W_w$.

  (3) First of all, note that $\mathcal M_w$ is definable and
  therefore closed under direct sums, direct products, pure submodules
  and pure-epimorphic images, see for instance \cite[4.2 and
  4.3]{B}. So, we only have to prove the only-if part. Let $M$ be a
  module of slope $w$. As shown in \cite{ATT}, there is a short exact
  sequence $$0\to M\to C_0\to C_1\to 0$$ where $C_0\in(\C_w)^\perp$
  and $C_1\in \C_w$. As $M\in\C_w$, also the middle term
  $C_0\in\C_w$. As $\C_w$ consists of modules of projective dimension
  at most one by \cite[5.4]{RR}, the class $(\C_w)^\perp$ is closed
  under epimorphic images and thus $C_1\in(\C_w)^\perp$. We conclude
  that $C_0,C_1\in\C_w\cap(\C_w)^\perp=\Prod \mathbf W_w$.

  It remains to prove that the sequence is pure-exact, that is, that
  it stays exact under the functor $\Hom_\Lambda(F,-)$ for any
  finitely generated $\Lambda$-module $F$. We can assume w.l.o.g. that
  $F$ is indecomposable with $\Hom_\Lambda(F,C_1)\not=0$, hence
  $F\in\p_w$. Then the claim follows from the fact that
  $\Ext^1_\Lambda(F,M)=0$ as $M\in\B_w=\p_w\,^\perp$.

The second statement is proven dually. 
 \end{proof}

 The tilting module $\mathbf L_w$ can be constructed in a similar way
 as the Lukas tilting module in \cite{KT,Lu}.
\begin{prop}\label{irrationalLukas}
  Let $w\in\R^+\setminus\Q^+$. Given a sequence of rational numbers
  $\alpha_1<\alpha_2<\ldots<w$ converging to $w$, there is a chain of
  modules in $\add\p_w$
$$\Lambda=P_0\subset P_1\subset P_2\subset\ldots$$
with $P_i$ of slope ${\alpha_i}$ for all $i\ge 1$ and
$P_{i+1}/P_i\in\add\p_w$ for all $i\ge 0$, such that the modules
$L_0=\bigcup_{i\in\N} P_i$ and $L_1=L_0/\Lambda$ give rise to a
tilting module $\mathbf L_w=L_0\oplus L_1$ with tilting class
$\Gen\mathbf L_w=\B_w$.
 \end{prop}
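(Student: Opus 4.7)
I plan to construct the chain $(P_i)_{i\ge 0}$ inductively using the separating property of the tubular families, and then verify that $\mathbf L_w=L_0\oplus L_1$ satisfies the tilting conditions (T1)--(T3') with $\Gen\mathbf L_w=\B_w$ by combining Eklof's Lemma with the characterization $\B_w=\bigcap_j\B_{\alpha_j}$ from Lemma~\ref{BwCw}(1).

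For the inductive construction, set $P_0=\La\in\add\p_0\subset\add\p_w$. At the inductive step, $P_{i-1}$ lies in $\add\p_{\alpha_i}$ (either as $\La\in\p_0$ or as an element of $\add\tube_{\alpha_{i-1}}\subset\add\p_{\alpha_i}$), while its injective envelope $I(P_{i-1})$ lies in $\add\q_\infty\subset\add\q_{\alpha_i}$ since all indecomposable injective $\La$-modules lie in the preinjective component. The separating property of $\tube_{\alpha_i}$ then yields a factorization $P_{i-1}\hookrightarrow P_i\to I(P_{i-1})$ with $P_i\in\add\tube_{\alpha_i}$, necessarily of slope $\alpha_i$. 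The main technical step, which I expect to be the hard part of the proof, is to refine this factorization so that the cokernel $P_i/P_{i-1}$ lies in $\add\p_w$, preventing summands in $\tube_\gamma$ for $\gamma\ge w$ or in $\q_\infty$; this requires a careful argument using auxiliary pushouts or universal extensions to absorb any unwanted summands of the naive cokernel while keeping $P_i$ inside $\add\tube_{\alpha_i}$.

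To complete the proof, set $L_0=\bigcup P_i$, $L_1=L_0/\La$, and $\mathbf L_w=L_0\oplus L_1$. Both $L_0$ and $L_1$ inherit $\add\p_w$-filtrations from the construction. Since $\add\p_w\subset\C_w$ consists of modules of projective dimension at most one by Lemma~\ref{5.4} and $\La$ is perfect, direct limits preserve this bound and (T1) follows; condition (T3') is immediate from the sequence $0\to\La\to L_0\to L_1\to 0$. To see $\mathbf L_w\in\B_w$: for each fixed $j$, the separating property gives $\Hom(\tube_{\alpha_i},\p_{\alpha_j})=0$ whenever $i\ge j$ (using $\p_{\alpha_j}\subset\p_{\alpha_i}$), so the tail $(P_i)_{i\ge j}$ lies in the torsion class $\B_{\alpha_j}$, and so do $L_0$ and its quotient $L_1$. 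By Lemma~\ref{BwCw}(1), $\mathbf L_w\in\bigcap_j\B_{\alpha_j}=\B_w$, so $\mathbf L_w^{(\kappa)}\in\B_w=\p_w^\perp$ for every cardinal $\kappa$. Eklof's Lemma applied to the $\add\p_w$-filtration then yields both (T2), namely $\Ext^1(\mathbf L_w,\mathbf L_w^{(\kappa)})=0$, and $\B_w\subset\mathbf L_w^\perp=\Gen\mathbf L_w$; the reverse inclusion $\Gen\mathbf L_w\subset\B_w$ follows from $\mathbf L_w\in\B_w$ and closure of $\B_w$ under direct sums and quotients.
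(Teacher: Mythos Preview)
Your verification at the end is sound: once you know that $L_0,L_1$ are $\add\p_w$-filtered and lie in $\B_w$, Eklof's Lemma together with (T1)--(T3') gives you a tilting module with class $\B_w$, essentially as in the paper (which phrases the same conclusion via $\Add\mathbf L_w=\B_w\cap{}^\perp\B_w$ and quotes \cite[13.4]{RR} instead of your tail argument for $L_0\in\B_w$). One small slip: over a tubular algebra the indecomposable injectives lie in $\tube_\infty\cup\q_\infty$, not in $\q_\infty$ alone; this does not damage your argument since $\tube_\infty\cup\q_\infty\subset\q_{\alpha_i}$ anyway.

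The real gap is exactly the one you flag, and your suggested fix is not an argument. Factoring the injective envelope through $\add\tube_{\alpha_i}$ gives you an embedding $P_{i-1}\hookrightarrow P_i$, but there is no reason for $P_i/P_{i-1}$ to avoid $\q_w$: in the tubular situation $\Ext^1_\Lambda(\q_w,\p_{\alpha_i})\cong D\Hom_\Lambda(\p_{\alpha_i},\tau\q_w)$ is typically nonzero, so such extensions do occur. Shrinking $P_i$ to kill a bad summand of the cokernel will in general take you out of $\add\tube_{\alpha_i}$ (submodules in $\mla$ of objects in $\add\tube_{\alpha_i}$ need not stay there), so you lose the slope condition; enlarging $P_i$ runs into the same problem. ``Auxiliary pushouts or universal extensions'' does not name a mechanism that resolves this.

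The paper's construction avoids the difficulty by choosing the map out of $P_i$ differently. Instead of the injective envelope, one takes a special $(\C_w)^\perp$-preenvelope
\[
0\to P_i\xrightarrow{\,f\,} W_0\to W_1\to 0,\qquad W_0,W_1\in\Prod\mathbf W_w\subset\Mcal_w\subset\C_w.
\]
Since $W_0\in(\C_w)^\perp\subset\Qcal_{\alpha_{i+1}}=\varinjlim\q_{\alpha_{i+1}}$ (Lemma~\ref{BwCw}), the map $f$ factors through some $Q\in\add\q_{\alpha_{i+1}}$, and then, by separation, through some $P_{i+1}\in\add\tube_{\alpha_{i+1}}$. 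Comparing the two short exact sequences via the snake lemma, the induced map $g\colon Z_{i+1}=P_{i+1}/P_i\to W_1$ has $\Ker g\cong\Ker(P_{i+1}\to W_0)\subset P_{i+1}\in\C_w$. Now the crucial point: $W_1\in\C_w=(\q_w)^o$, so any submodule $U\subset Z_{i+1}$ with $U\in\q_w$ satisfies $g\rvert_U=0$, hence $U\subset\Ker g\subset\C_w$, forcing $U=0$. Thus $Z_{i+1}\in\add\p_w$. In short, the key idea you are missing is to replace the injective envelope by an approximation into $\Prod\mathbf W_w$, whose cokernel lies in $\C_w$; this is precisely what forces the quotient $P_{i+1}/P_i$ to stay in $\add\p_w$.
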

 \begin{proof}
   Start with $P_0=\Lambda$. Given $P_i$, which by assumption belongs
   to $\add\p_{\alpha_{i+1}}\subset\C_w$, take a special
   $(\C_w)^\perp$-approximation as in \cite{ATT}
$$0\to P_i\stackrel{f}{\longrightarrow} W_0\to W_1\to 0$$ with
$W_0,W_1\in\Prod W_w$. Observe that
$W_0\in(\C_w)^\perp\subset\Qcal_{\alpha_{i+1}}=\varinjlim
\q_{\alpha_{i+1}}$ by Lemma \ref{BwCw} (2) and (4). So $f$ factors
through a map $P_i\stackrel{f'}{\longrightarrow}Q$ with $Q\in\add
\q_{\alpha_{i+1}}$, and by the separation condition $f'$ factors
through a map $P_i\stackrel{f''}{\longrightarrow}P_{i+1}$ with
$P_{i+1}\in\add \tube_{\alpha_{i+1}}$ of slope $\alpha_{i+1}$. We
obtain a commutative diagram  
   \[\xymatrix{
         0\ar[r] &P_i\ar[d]^{=}\ar[r]^{f''}&P_{i+1}\ar[d]^{h}\ar[r]^{}&Z_{i+1}\ar[d]^{g}\ar[r]^{}&0
 \\
 0\ar[r] &P_i \ar[r]^{f}&W_0\ar[r]^{}&W_1\ar[r]^{}&0
 }\]
 where $\Ker g\cong \Ker h\subset P_{i+1}\in\C_w$ cannot have
 submodules in $\q_w$. Then $Z_{i+1}$ cannot have submodules
 $U\in\q_w$, because $g\mid_{U}:U\to W_1\in\C_w$ would have to be zero
 and thus $U\subset\Ker g$. So, we conclude that
 $P_{i+1}/P_i\cong Z_{i+1}\in\add\p_w$.

 The $\p_w$-filtered modules $L_0=\bigcup_{i\in\N} P_i$ and
 $L_1=L_0/\Lambda$ then belong to
 ${}^\perp(\p_w\,^\perp)={}^\perp\B_w$ by \cite[3.2.4]{GT}.  Further,
 the direct limit $L_0$ of the chain $ P_1\subset P_2\subset\ldots$
 has slope $w$ by \cite[13.4]{RR}. This implies that $L_0$ and $L_1$
 belong to $\B_w$ and therefore to
 $\Add\mathbf L_w=\B_w\cap{}^\perp\B_w$. Now the claim follows easily.
 \end{proof} 

\smallskip

\subsection{\sc Pure-injective modules}\label{piclass}
By Theorem \ref{classpi}, the pure-injective modules of rational slope
$w$ are precisely the modules of the form $M=M'\oplus M''$ where
$M'\in\Prod\tube_w$ and $M''\in\Add\mathbf W_w=\Prod\mathbf W_w$. By
results of Harland and Prest \cite{HP}, pure-injective modules of
irrational slope can be superdecomposable when the ground field $k$ is
countable. Observe, however, that the superdecomposable part does not
play a role when computing the cotilting class $\C_w$, as shown by
Theorem \ref{superdec}. Moreover, the class of pure-injectives is
described as follows.

\begin{cor}\label{pirrat}
  If $w\in\R^+\setminus\Q^+$, then $\mathbf W_w$ is a pure-injective,
  non-$\Sigma$-pure-injective module, and $\Prod\mathbf W_w$ is the
  class of all pure-injective modules of slope $w$.
\end{cor}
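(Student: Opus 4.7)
The pure-injectivity of $\mathbf W_w$ follows from Bazzoni's theorem \cite{B1}, which asserts that every cotilting module over any ring is pure-injective. For the identification of $\Prod\mathbf W_w$ with the class of pure-injective modules of slope $w$, one inclusion is immediate: $\mathbf W_w$ is pure-injective and of slope $w$, and both properties are preserved under direct products and direct summands—using for slope that $\Mcal_w$ is definable, as recorded in the proof of Theorem~\ref{irrational slope}. Conversely, any pure-injective $M$ of slope $w$ embeds purely in some $\mathbf W_w^I$ by Theorem~\ref{irrational slope}(3), and this pure embedding splits by pure-injectivity of $M$, placing $M$ in $\Prod\mathbf W_w$.

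To establish that $\mathbf W_w$ is not $\Sigma$-pure-injective, I argue by contradiction. Suppose $\mathbf W_w$ is $\Sigma$-pure-injective; equivalently (Zimmermann), it has DCC on pp-definable subgroups. This property is inherited by arbitrary direct products (pp-formulas commute with products, so $\phi(M^I)=\phi(M)^I$) and by pure submodules (since $\phi(N)=N\cap\phi(M)$ for a pure embedding $N\hookrightarrow M$, and any descending chain $\psi_k(N)$ is dominated by the chain $\bigcap_{i\le k}\psi_i(M^I)\cap N$, which stabilizes by DCC in $M^I$). Hence every pure submodule of $\mathbf W_w^I$ is again $\Sigma$-pure-injective, in particular pure-injective. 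Combined with Theorem~\ref{irrational slope}(3), this forces every module of slope $w$ to be pure-injective. In particular, the Lukas-type tilting module $\mathbf L_w=L_0\oplus L_1$ from Proposition~\ref{irrationalLukas} would be pure-injective, and by the description of $\Prod\mathbf W_w$ just established, $L_0$ would sit as a direct summand of some product $\mathbf W_w^J$.

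The contradiction then comes from the concrete structure of $L_0$: the chain $\Lambda=P_0\subsetneq P_1\subsetneq\cdots$ from Proposition~\ref{irrationalLukas} exhibits a non-zero finite-dimensional submodule $P_1\in\add\tube_{\alpha_1}\subseteq\p_w$, since $\alpha_1<w$. As $\mathbf W_w\in\mathcal B_w=\p_w^{o}$, we obtain $\Hom_\Lambda(P_1,\mathbf W_w^J)=\prod_J\Hom_\Lambda(P_1,\mathbf W_w)=0$, contradicting the nonzero composite inclusion $P_1\hookrightarrow L_0\hookrightarrow\mathbf W_w^J$. The main subtlety in this plan is transferring $\Sigma$-pure-injectivity to pure submodules of arbitrary products $\mathbf W_w^I$; once this is in hand, the contradiction uses only the concrete $\p_w$-submodule $P_1$ furnished by the Lukas-type construction.
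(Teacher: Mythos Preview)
Your treatment of pure-injectivity and of the identification $\Prod\mathbf W_w=\{\text{pure-injectives of slope }w\}$ is correct and coincides with the paper's argument. The setup for the non-$\Sigma$-pure-injectivity is also the same as the paper's: assuming $\mathbf W_w$ is $\Sigma$-pure-injective, one concludes $\Mcal_w=\Prod\mathbf W_w$, hence $\mathbf L_w\in\Prod\mathbf W_w$.

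The error lies in your contradiction step. You write ``$\mathbf W_w\in\mathcal B_w=\p_w^{o}$'' and deduce $\Hom_\Lambda(P_1,\mathbf W_w)=0$. But in the paper's notation $\mathcal B_w={}^o(\p_w)$, not $(\p_w)^o$; membership in $\mathcal B_w$ means $\Hom_\Lambda(\mathbf W_w,\p_w)=0$, the opposite direction. In fact $\Hom_\Lambda(P_1,\mathbf W_w)\neq 0$: since $\Hom_\Lambda(\q_w,\p_w)=0$ by separation, we have $\p_w\subset(\q_w)^o=\Ccal_w=\Cogen\mathbf W_w$, so $P_1$ embeds into some power of $\mathbf W_w$. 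Thus the inclusion $P_1\hookrightarrow L_0\hookrightarrow\mathbf W_w^J$ is no contradiction at all --- such an embedding is exactly what one expects. The argument cannot be salvaged by replacing the target with $L_0$ either: although $\Hom_\Lambda(L_0,P_1)=0$ (this is what $L_0\in\mathcal B_w$ gives), the inclusion $P_1\hookrightarrow L_0$ is not pure, so pure-injectivity of the finite-dimensional $P_1$ yields nothing.

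The paper's route from $\Mcal_w=\Prod\mathbf W_w$ to a contradiction is substantially more involved: one first shows $(\Ccal_w)^\perp=\Bcal_w$ (using $\mathbf L_w\in(\Ccal_w)^\perp$, that $\Ccal_w$ consists of modules of projective dimension $\le 1$, and Lemma~\ref{BwCw}(4)), whence the torsion pair $(\Qcal_w,\Ccal_w)$ splits. Proposition~\ref{sheavesashearts} then forces the heart $\Acal_w$ to be $\Qcoh\mathbb{Y}$ for a genus-zero curve $\mathbb{Y}$, and one reaches a contradiction by analyzing the simple objects in $\mathrm{fp}\,\Acal_w$: they would all have to lie in $\q_w$, leaving the noetherian tilting object $\Lambda[1]\in\Ccal_w[1]$ with no simple quotient. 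Your direct approach via the filtration of $L_0$ does not appear to shortcut this.
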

 \begin{proof}
   The cotilting module $\mathbf W_w$ is pure-injective by \cite{B1},
   and $\Prod\mathbf W_w$ is the class of all pure-injective module of
   slope $w$ by Theorem \ref{irrational slope}(3).

   Assume that $\mathbf W_w$ is $\Sigma$-pure-injective. Then so is
   every product of copies of $\mathbf W_w$ and any pure submodule of
   such product, yielding $\Mcal_w=\Prod \mathbf W_w$ by Theorem
   \ref{irrational slope}(3). Hence
   $\mathbf L_w\in\Mcal_w\subset(\Ccal_w)^\perp$, and as $\Ccal_w$
   consists of modules of projective dimension at most one,
   $\Gen\mathbf L_w\subset(\Ccal_w)^\perp$, and
 $(\mathcal C_w)^\perp=\mathcal B_w$ by Lemma \ref{BwCw}(4). 
 From $\mathcal Q_w\subset\mathcal B_w$ we deduce that
 $(\mathcal Q_w,\mathcal C_w)$ is a split torsion pair satisfying the
 assumptions of Proposition \ref{sheavesashearts} (for assumption (v)
 observe that $\C_w\subset\C_v$ for some rational $v>w$, and use Lemma
 \ref{5.4}). So, the heart $\mathcal A_w$ of the corresponding
 t-structure in $\Db$ is equivalent to the category category
 $\Qcoh\mathbb{Y}$ of quasi-coherent sheaves over a noncommutative
 curve of genus zero $\mathbb Y$.

 We want to lead this to a contradiction. To this end, we investigate
 the category $\mathcal H_0$ of finite length objects in
 $\mathcal H={\rm fp}(\mathcal A_w)\sim\coh\mathbb{Y}$. We know
 e.g. from \cite[10.1]{L} that there is a family of connected
 uniserial Hom-orthogonal length categories
 $\mathcal U_y,y\in \mathbb Y,$ such that all $\mathcal U_y$ have
 finite $\tau$-period and
 $\mathcal H_0=\bigcup_{y\in\mathbb Y} \mathcal U_y$. So, if $S$ is a
 simple object in $\mathcal H_0$, then its injective envelope $E(S)$
 has only finitely many non-isomorphic composition factors. Note that
 $S$ is of the form $S=Y[1]$ with $Y\in\p_w$ or $S=Q$ with $Q\in\q_w$.
 In the first case, $Y\in\p_\alpha$ for some $\alpha<w$, and there is
 $\alpha<\beta<w$ such that $E(S)$ has all composition factors in
 $\p_\beta[1]$ and therefore $\Hom_\A(\tube_\beta[1],E(S))=0$. On the
 other hand, $Y$ is cogenerated by $\tube_\beta$, so there is a
 non-zero map $Y\to B$ for some indecomposable module
 $B\in\tube_\beta$, yielding a monomorphism $S\to B[1]$ and thus a
 non-zero map $B[1]\to E(S)$ in $\A_w$, a contradiction.  This shows
 that the simple objects in $\mathcal H_0$ are all of the form $S=Q$
 with $Q\in\q_w$, so they belong to the torsion-free class of the
 torsion pair $(\mathcal C_w[1],\mathcal Q_w)$ in $\A_w$. But then the
 noetherian tilting object $V=\Lambda[1]\in\mathcal C_w[1]$ cannot
 have a simple quotient, again a contradiction.
\end{proof}

In order to determine the pure-injectives of slope $0$ or $\infty$, we
need to investigate the non-stable tubular families $\tube_0$ and
$\tube_\infty$.  It will be convenient to work with sheaves rather
than with modules.  We fix a canonical trisection $(\p,\tube,\q)$ of
$\mla$. The corresponding torsion pair $(\Gen\q,\C)$ induces a
t-structure whose heart $\A$ can be identified with the category
$\Qcoh\XX$ of quasi-coherent sheaves over a noncommutative curve
$\mathbb X$ of genus zero and tubular type, cf.~Section \ref{setup}.

We know from Section \ref{hearts of cotilting} that $\Qcoh\XX$ admits
a coherent tilting sheaf $V$ with endomorphism ring $\Lambda$ yielding
an equivalence $H_V=\Hom_\A (V,-):\Gen V\to \C$.  We now proceed as in
\cite[Section 4.9]{DirkDiss}, where more details can be found. Let
$V_1,\ldots,V_m$ be the indecomposable direct summands of $V$ having
maximal slope $\alpha$ as sheaves in $\Qcoh\XX$. Under the functor
$H_V$, they correspond to the indecomposable projective modules
contained in the non-stable family $\tube_0$. We write
$V=V_0\oplus V_{max}$ where $V_{max}=\bigoplus_{i=1}^m V_i$, and we
denote by $\Lambda_0=\Lambda/\Lambda e\Lambda \cong \End_\A V_0$ the
algebra induced by the idempotent $e\in\Lambda$ corresponding to the
direct summand $V_{max}$.  The sheaves $V_1,\ldots,V_m$ are arranged
in a union $\mathfrak W$ of disjoint wings inside the stable tubular
family $\mathfrak t_\alpha$ in $\Qcoh\XX$ consisting of all coherent
sheaves of slope $\alpha$. There are precisely $m$ rays starting (and
$m$ corays ending) in $\mathfrak W$.
 
Now a sheaf $X\in\Qcoh\XX$ of slope $\alpha$ corresponds to a
$\Lambda$-module in $\C$ under the functor $H_V$ if and only if
$\Ext_\A^1(V,X)=0$. By Serre duality and slope arguments this amounts
to $\Hom_\A(X,\tau V)=\Hom_\A(X,\tau V_{max})=0$, that is,
$\Ext_\A^1(V_{max},X)=0$.  If additionally $\Hom_\A(V_{max},X)=0$,
then $H_V(X)=\Hom_\A(V_0,X)$ is a $\Lambda_0$-module.
Notice that the algebra $\Lambda_0$ is tame concealed, its
preprojective component agrees with $\p_0$, and its (stable) tubular
family is obtained from $\mathfrak t_\alpha$ by removing the $m$ rays
starting in $\mathfrak W$ and the $m$ corays ending in
$\tau\mathfrak W$. Moreover, there is a homological ring epimorphism
$\lambda:\Lambda\to\Lambda_0$.


Let us turn to the Pr\"ufer sheaves of slope $\alpha$. We claim that
they correspond to indecomposable pure-injective $\Lambda$-modules of
slope $0$. For a proof, we switch to the category $\Qcoh\XX_\alpha$
whose indecomposable finite length objects are given by the tubular
family $\mathfrak t_\alpha$ (here $\mathbb X_\alpha$ is a
noncommutative curve of tubular type, which is isomorphic to
$\mathbb X$ in case $k$ is algebraically closed), and we apply Lemma
\ref{JL}. More precisely, a Pr\"ufer sheaf $X$ of slope $\alpha$ is
injective when viewed inside $\Qcoh\XX_\alpha$, cf.~Section
\ref{setup} or \cite[Prop.~3.6]{AK}.  So $X$ is a pure-injective sheaf
in $\Qcoh\XX$. Further, $\Ext_\A^1(V,X)=0$, and the functor $H_V$,
which preserves direct limits and direct products, maps $X$ to a
pure-injective $\Lambda$-module. This module has slope $0$ because the
sheaves in $\mathfrak t_\alpha$ correspond to $\Lambda$-modules in
$\tube_0$ except for the $m$ corays ending in $\tau\mathfrak W$.  We
denote by $X_1,\ldots,X_m$ the $\Lambda$-modules which correspond to
Pr\"ufer sheaves originating in the $m$ rays from $\mathfrak W$; the
remaining Pr\"ufer sheaves correspond to the Pr\"ufer
$\Lambda_0$-modules.

Similarly, the adic sheaves of slope $\alpha$ correspond to
$\Lambda$-modules if and only if they don't arise from corays ending
in $\tau\mathfrak W$, in which case they are indecomposable
pure-injective $\Lambda$-modules and agree with the adic modules over
$\Lambda_0$. Moreover, they have slope $0$ because $\mathcal M_0$ is
closed under inverse limits by \cite[Lemma 9.10]{BH2}.

The case $w=\infty$ is obtained by duality, since the role of $0$ and
$\infty$ is swapped when turning to left modules via the duality
$D=\Hom_k(-,k)$.
So, there is a tame concealed factor algebra $\Lambda_\infty$ of
$\Lambda$ whose preinjective component agrees with $\q_\infty$, and
there 
are indecomposable pure-injective $\Lambda$-modules of slope $\infty$
which, except from, say, $\ell$ modules denoted by
$Y_1,\ldots,Y_\ell$, are precisely the adic
$\Lambda_\infty$-modules. Furthermore, also the Pr\"ufer modules over
$\Lambda_\infty$ are pure-injective $\Lambda$-modules.

Finally, for both $w=0$ and $w=\infty$, the generic module over
$\Lambda_w$ occurs as a direct summand in a direct product of copies
of any Pr\"ufer $\Lambda_w$-module, and so it is a pure-injective
$\Lambda$-module of slope $w$.

\smallskip

Let us now summarize our findings.
 
\begin{thm}\label{Ziegler} 
 The following is a complete list of the indecomposable pure-injective
$\Lambda$-modules:
\begin{enumerate}
 \item the finite dimensional indecomposable modules,
 \item the Pr\"ufer modules, the adic modules, and the generic module
   of slope $w$ with $w\in\Q^+$,
 \item the indecomposable modules in $\Prod\mathbf W_w$ with
   $w\in\R^+\setminus \Q^+$,
 \item the Pr\"ufer modules, the adic modules, and the generic module
   over $\Lambda_0$ and $\Lambda_\infty$,
 \item the modules $X_1,\dots, X_m$ and $Y_1,\ldots,Y_\ell$ defined
   above.
 \end{enumerate}
\end{thm}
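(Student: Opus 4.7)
The plan is to verify completeness of the list, since the preceding discussion in Section~\ref{piclass} already shows that each module in (1)--(5) is indecomposable pure-injective. So let $M$ be an arbitrary indecomposable pure-injective $\Lambda$-module. By the classification of slopes in~\cite{RR}, $M$ has a slope $w\in\R^+\cup\{0,\infty\}$, and I will case-split on $w$.

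First, if $M$ is finite dimensional, then item (1) applies. So assume $M$ is infinite dimensional. If $w\in\Q^+$, then I apply Theorem~\ref{classpi} to the concealed canonical setting with stable separating tubular family $\tube_w$ and tilting--cotilting module $\mathbf W_w$: the pure-injectives in $\Mcal_w$ split as $M'\oplus M''$ with $M'\in\Prod\tube_w$ and $M''\in\Add\mathbf W_w$, and indecomposability together with infinite dimension forces $M$ to be an indecomposable summand of $\mathbf W_w$, hence a Pr\"ufer, adic, or generic module of slope $w$ as in (2). If instead $w\in\R^+\setminus\Q^+$, Corollary~\ref{pirrat} yields immediately $M\in\Prod\mathbf W_w$, so item (3) applies.

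It remains to treat $w=0$ and, by duality, $w=\infty$. Here I transfer the problem into the sheaf category $\A=\Qcoh\XX$ via the equivalence induced by the tilting sheaf $V$: the indecomposable pure-injective $\Lambda$-modules of slope $0$ correspond under $H_V$ to indecomposable pure-injective objects of slope $\alpha$ in $\A$ (equivalently in $\Qcoh\XX_\alpha$) that are Ext-orthogonal to $V_{max}$. Since $\mathfrak t_\alpha$ is a stable tubular family inside $\Qcoh\XX_\alpha$, the analysis in Section~\ref{piclass} (using Lemma~\ref{JL} to see pure-injectivity of Pr\"ufer and adic sheaves, and closure of $\Mcal_0$ under inverse limits to handle generic modules) identifies all infinite-dimensional candidates: they are the Pr\"ufer, adic, and generic sheaves of slope $\alpha$. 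Removing from this list the $m$ rays starting in the wing $\mathfrak W$ and the $m$ corays ending in $\tau\mathfrak W$ yields exactly the Pr\"ufer, adic, and generic $\Lambda_0$-modules via the homological ring epimorphism $\lambda\colon\Lambda\to\Lambda_0$, giving item (4). The Pr\"ufer sheaves on the $m$ excluded rays produce the exceptional modules $X_1,\dots,X_m$ of item (5), while the adic sheaves on the excluded corays do not give $\Lambda$-modules. Dualizing via $D=\Hom_k(-,k)$ (which swaps slopes $0$ and $\infty$) handles $w=\infty$ and produces $Y_1,\ldots,Y_\ell$.

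The main obstacle is the $w=0$ (and dually $w=\infty$) case: unlike the rational case, where a single theorem gives the classification inside $\Mcal_w$, here the tubular family $\tube_0$ is \emph{non-stable}, so Theorem~\ref{classpi} is not directly available. The technical heart of the argument is therefore the passage through the sheaf category, which requires verifying that (i) pure-injectivity is preserved by the functor $H_V=\Hom_\A(V,-)$ when restricted to the Ext-orthogonal complement of $V_{max}$ (using that $H_V$ preserves products and direct limits, and Lemma~\ref{JL}), (ii) the correspondence between sheaves of slope $\alpha$ and $\Lambda$-modules of slope $0$ is exactly defective along the rays through $\mathfrak W$ and corays ending in $\tau\mathfrak W$, and (iii) the generic sheaf maps to a pure-injective generic $\Lambda_0$-module using closure of $\Mcal_0$ under inverse limits from~\cite[Lemma~9.10]{BH2}. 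Once these are in place, the classification follows by collecting all indecomposables across the slope strata.
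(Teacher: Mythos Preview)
Your treatment of $w\in\R^+$ is essentially fine, though your phrasing for rational $w$ is slightly off: if $M\in\Prod\tube_w$ is indecomposable and infinite-dimensional, it is an adic module, not a summand of $\mathbf W_w$. The conclusion is unaffected, and the paper in fact handles all of $w\in\R^+$ by citing \cite[Lemma~50]{H} together with Corollary~\ref{pirrat}, rather than invoking Theorem~\ref{classpi} directly.

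The genuine gap is at $w=0$. You assert that ``the analysis in Section~\ref{piclass} \ldots identifies all infinite-dimensional candidates: they are the Pr\"ufer, adic, and generic sheaves of slope $\alpha$.'' But Section~\ref{piclass} only exhibits these sheaves as pure-injective; it does not prove that they exhaust the indecomposable pure-injectives of slope~$\alpha$. To obtain completeness on the sheaf side you would need an analogue of Theorem~\ref{classpi} for $\Qcoh\XX_\alpha$, and since $\XX_\alpha$ is again tubular this is circular unless you set up a further algebra and justify the transfer of pure-injectivity and slope in both directions --- none of which you carry out. Your point~(ii) about the correspondence being ``defective along the rays through $\mathfrak W$'' is also used only descriptively; it does not by itself force an arbitrary pure-injective of slope~$\alpha$ to be one of the listed types.

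The paper proceeds quite differently and avoids this circularity. It first observes that if the module of slope $0$ is already a $\Lambda_0$-module (equivalently, the corresponding sheaf $I$ satisfies $\Hom_\A(\mathfrak W,I)=0$), then Proposition~\ref{pi} applies directly, because $\Lambda_0$ is tame concealed and hence \emph{domestic}; this disposes of items (1) and (4). For the remaining case $\Hom_\A(\mathfrak W,I)\neq 0$, the paper runs a short torsion-theoretic argument in $\Qcoh\XX$: the torsion part $t(I)$ for the pair $(\Gen\mathfrak t_\alpha,\mathfrak F_\alpha)$ is nonzero, it contains an indecomposable summand that is either coherent or a Pr\"ufer sheaf by \cite[Prop.~3.7]{AK}, and the canonical sequence $0\to t(I)\to I\to I/t(I)\to 0$ is pure-exact by \cite[Rem.~3.3]{AK}, so that summand splits off from the indecomposable $I$ and hence equals $I$. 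This forces $H_V(I)$ to be one of $X_1,\ldots,X_m$. Your proposal misses both the reduction to the domestic algebra $\Lambda_0$ via Proposition~\ref{pi} and this pure-exactness/splitting argument, and these are precisely the steps that give completeness at the boundary slopes.
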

\begin{proof}
  By the discussion above, all modules in the list are indecomposable
  pure-injective, so we only have to show that the list is
  complete. Every infinite dimensional indecomposable pure-injective
  module has a slope $w$ by \cite[13.1]{RR}.
  Combining 
  \cite[Lemma 50]{H} with Corollary \ref{pirrat}, we get the statement
  for $w\in\R^+$.

  Let now $w\in\{0,\infty\}$.  We discuss the case $w=0$, the case
  $w=\infty$ is obtained by duality.  By Proposition \ref{pi} we can
  assume that our module is not a $\Lambda_0$-module. Keeping the
  notation as above, it then has the form $H_V(I)$ for an
  indecomposable sheaf $I\in\Qcoh\XX$ with
  $\Hom_\A(\mathfrak W,{I})\not=0$.  Consider the canonical exact
  sequence $$0\to t(I)\to I\to I/t(I)\to 0$$ induced by the torsion
  pair $(\Gen{\mathfrak t}_\alpha, \mathfrak F_\alpha)$ generated by
  $\mathfrak t_\alpha$ in $\Qcoh\XX$. Observe first that $t(I)$ cannot
  vanish because $I$ is not torsionfree by our assumption. By
  \cite[Prop.~3.7]{AK}, the sheaf $t(I)$ then has an indecomposable
  pure-injective summand which is either coherent or a Pr\"ufer
  sheaf. Since the sequence is pure-exact by \cite[Rem.~3.3]{AK}, this
  summand must coincide with the indecomposable sheaf $I$, and our
  module $H_V(I)$ must then be isomorphic to one of $X_1,\ldots,X_m$.
\end{proof}

\begin{rem} (1) Every module in $\Ccal_w,\,w\in\R^+\cup\{0\}$, has
  projective dimension at most one, and every module in
  $\Bcal_w,\,w\in\R^+\cup\{\infty\}$, has injective dimension at most
  one, as a consequence of Lemma \ref{5.4}.  In particular, all
  modules in $\p_\infty$ have projective dimension at most one, and
  all modules in $\q_0$ have injective dimension at most one.

\smallskip

(2) Let $w\in\{0,\infty\}$. There are a cotilting module $\mathbf W_w$
and a tilting module $\mathbf L_w$ of slope $w$ such that
$\Ccal_w={}^\perp\q_w=\Cogen\mathbf W_w$ and
$\Bcal_w=\p_w\,^\perp=\Gen\mathbf L_w$.  This is a consequence of
Theorem \ref{res} and (1).  That $\mathbf W_w$ and $\mathbf L_w$ have
slope $w$ is shown as in the proof of Theorem \ref{irrational slope}.

\smallskip

(3) Up to equivalence,
$\mathbf W_0= \lambda_\ast(W) \oplus X_1\oplus\ldots\oplus X_m$, where
$W $ is the direct sum of the generic and all Pr\"ufer modules over
$\Lambda_0$ and $\lambda_\ast:\Mla_0\to\Mla$ is the embedding given by
the restriction of scalars along the ring epimorphism
$\lambda:\Lambda\to\Lambda_0$.

Indeed, recall first that $\mathbf W_0=H_V(I)$ where $I$ is the sum of
the generic and the Pr\"ufer sheaves of slope $\alpha$ in $\Qcoh\XX$
(with $\alpha$ as above). We switch again to $\Qcoh\XX_\alpha$, which
can be viewed as the heart of the faithful torsion pair in $\Qcoh\XX$
generated by the class $\mathfrak q_\alpha$ of coherent sheaves of
slope $>\alpha$. Now $I$ corresponds to an injective cogenerator of
$\Qcoh\XX_\alpha$, so arguing as in Subsection \ref{tilting objects},
we see that $I$ satisfies conditions (C1)-(C3) and
$\Cogen \,I={}^\perp I=\Ker\Hom_\A(\mathfrak q_\alpha,-)$.

Next, we turn to our fixed canonical trisection $(\p,\tube,\q)$ of
$\mla$ with the induced split torsion pair $(\Gen\q,\C)$ in $\Mla$ and
tilted torsion pair $(\Gen V, \Ker\Hom_\A(V,-))$ in $\A=\Qcoh\XX$. The
torsion class $\Gen V= V^\perp$ contains $\mathfrak q_\alpha$ by slope
arguments. This implies that $\Ker\Hom_\A(\mathfrak q_\alpha,-)$
contains the corresponding torsionfree class $\Ker\Hom_\A(V,-)$, and
every module $Q\in\Gen\q$ satisfies $\Ext^1_\A(Q,I)=0$.

Now we are ready to prove our claim.  As in Lemma \ref{tilting in
  hearts}, we see that $\mathbf W_0$ verifies conditions (C2) and
(C3). Of course, $\Cogen\mathbf W_0\subset\mathcal C_0$. Further,
$\mathcal C_0\subset {}^\perp \mathbf W_0$, because any module
$C\in\C_0$ belongs to $\C$ and corresponds to a sheaf in
$\Ker\Hom_\A(\mathfrak q_\alpha,-)$ and thus satisfies
$\Ext^1_\Lambda(C,\mathbf W_0)\cong\Ext^1_\A(C[1],I)=0$ by Lemma
\ref{ExtHom}(d).  Let us check
$ {}^\perp \mathbf W_0\subset\Cogen\mathbf W_0$. Take
$M\in {}^\perp \mathbf W_0$, denote by $M'={\rm Rej}_{\mathbf W_0}(M)$
the intersection of all kernels of homomorphisms $M\to\mathbf W_0$,
and set $\overline{M}=M/M'$. Then $\overline{M}\in \Cogen \mathbf W_0$
belongs to ${}^\perp \mathbf W_0$ and to $\C_0$, and in particular it
has projective dimension at most one by (1). So
$\Ext^1_\Lambda(\overline{M}, \mathbf
W_0)=\Ext^2_\Lambda(\overline{M}, \mathbf W_0)=0$.
Applying the functor $\Hom_\Lambda(-, \mathbf W_0)$ to the exact
sequence $0\to M'\to M\to \overline{M}\to 0$ thus yields a long exact
sequence
$0\to \Hom_\Lambda(\overline{M}, \mathbf W_0)\to\Hom_\Lambda(M,
\mathbf W_0)\to\Hom_\Lambda(M', \mathbf
W_0)\to\Ext^1_\Lambda(\overline{M}, \mathbf W_0)\to\Ext^1_\Lambda(M,
\mathbf W_0)\to \Ext^1_\Lambda(M', \mathbf W_0)\to 0$
where the first map is an isomorphism by construction, and the fourth
and fifth term vanish. Then
$\Hom_\Lambda(M', \mathbf W_0)=\Ext^1_\Lambda(M', \mathbf W_0)=0$,
which implies $M'=0$ by condition (C3), and proves that
$M\in\Cogen\mathbf W_0$.  We conclude that $\mathbf W_0$ is a
cotilting $\La$-module cogenerating $\C_0$.
\end{rem}

 \subsection{\sc Tilting modules and sheaves}\label{allhaveslope}
 We now turn to the classification of tilting modules.  Let us
 summarize our findings in Sections \ref{rational} and
 \ref{irrational}.
\begin{cor}\label{resume}
  The tilting and cotilting modules of slope $w\in\Q^+$ are
  parametrized by the pairs $(Y,P)$ given by a branch object $Y$ in
  $\tube_w$ and a subset $P\subset\mathbb X_w$, where $\mathbb X_w$ is
  a noncommutative curve of genus zero (again tubular and derived
  equivalent to $\mathbb X$ by \cite[8.1.6]{Ku}) parametrizing the
  family $\tube_w$. The tilting and cotilting modules of irrational
  slope $w$ are equivalent to $\mathbf L_w$ and $\mathbf W_w$,
  respectively.
\end{cor}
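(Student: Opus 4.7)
The plan is to split the proof into the rational and irrational cases and to reduce each to a classification result already proved earlier. By definition, a tilting or cotilting module has slope $w$ precisely when it lies in $\Mcal_w=\Bcal_w\cap\Ccal_w$, so in both cases we need to parametrize the tilting and cotilting modules in this intersection.

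For $w\in\Q^+$, the family $\tube_w$ is sincere, stable and separating, so $(\p_w,\tube_w,\q_w)$ is a canonical trisection of $\mla$ in the sense of Section \ref{setup}. Viewing $\Lambda$ as a concealed canonical algebra together with this trisection, the class $\Mcal_w$ becomes exactly the class $\Mcal$ of Section \ref{Gabriel loc}, and Theorem \ref{tiltinginM} immediately supplies the required bijection between pairs $(Y,P)$, with $Y$ a branch module in $\tube_w$ and $P\subset\XX_w$, and the equivalence classes of tilting (respectively, cotilting) modules of slope $w$. That $\XX_w$ parametrizes $\tube_w$ and is a noncommutative curve of genus zero, again tubular and derived equivalent to $\XX$, follows from the identification $\mathcal A_w\simeq\Qcoh\XX_w$ of the heart of the t-structure induced by $(\Gen\q_w,\Ccal_w)$ in $\Db$, carried out in Section \ref{rational} by invoking \cite[8.1.6]{Ku}.

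For $w\in\R^+\setminus\Q^+$, the assertion is precisely Theorem \ref{irrational slope}(1) and (2), which yields that $\mathbf L_w$ and $\mathbf W_w$ are, up to equivalence, the unique tilting and cotilting modules of that slope.

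Since the corollary is essentially a packaging of results already established in Sections \ref{rational} and \ref{irrational}, there is no genuine obstacle. The only point one needs to check is that the hypotheses of Theorem \ref{tiltinginM} really are met when $\Lambda$ is equipped with the trisection $(\p_w,\tube_w,\q_w)$, and that "slope $w$" for a module coincides with membership in $\Mcal_w$; both verifications have been spelled out in Sections \ref{setup}, \ref{slope} and \ref{rational}, so the proof is simply a cross-reference.
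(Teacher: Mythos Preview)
Your proposal is correct and mirrors the paper's own treatment: the corollary is stated without proof as a summary of Sections~\ref{rational} and~\ref{irrational}, and your argument spells out precisely the cross-references (Theorem~\ref{tiltinginM} for rational slope, Theorem~\ref{irrational slope}(1)--(2) for irrational slope) that the paper has in mind.
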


We are going to see that all tilting modules in the ``central part''
of $\Mla$, that is, contained in a smallest $\C_w$ with
${0<w<\infty}$, arise in this way. This will enable us to recover the
classification of the tilting sheaves over a noncommutative curve of
genus zero of tubular type from \cite{AK}. We first need a preliminary
result.
\begin{lemma} \label{1} Let $T$ be a large tilting module with
  $\Scal={}^\perp(T^\perp)\cap\mla$.
\begin{enumerate}
\item Let $w\in\R^+\cup\{0,\infty\}$. Then $T\in\Ccal_w$ if and only
  if $\Scal\subset\Ccal_w$.
\item If $T\in\mathcal C_w$ with $w\in\Q^+$, and
  $\Scal\cap\tube_w\not=\emptyset$, then $T$ has slope $w$.
\end{enumerate}
\end{lemma}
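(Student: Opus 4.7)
For part (1) I would follow the argument of Lemma \ref{SandT}(1) verbatim with $\Ccal_w={}^\perp\mathbf W_w$ replacing $\Ccal$. Being a cotilting class, $\Ccal_w$ is closed under direct summands and transfinite filtrations by \cite[3.1.2, 3.2.4]{GT}, so ``$\Scal\subset\Ccal_w\Rightarrow T\in\Ccal_w$'' follows from $T$ being a direct summand of an $\Scal$-filtered module \cite[3.2.4]{GT}. Conversely, $T\in\Ccal_w$ gives $\Ccal_w^\perp\subset T^\perp$, whence $\Scal={}^\perp(T^\perp)\cap\mla\subset{}^\perp(\Ccal_w^\perp)=\Ccal_w$ using the equality $\Ccal_w={}^\perp(\Ccal_w^\perp)$, valid for any cotilting class.

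For part (2), I plan to pass to the heart $\Acal_w\cong\Qcoh\XX_w$ of the t-structure on $\Db$ induced by the faithful torsion pair $(\Qcal_w,\Ccal_w)$ of Section \ref{rational}. Corollary \ref{shape2} gives that $T[1]$ is a tilting object of $\Acal_w$ with $\Gen T[1]\subset\Ccal_w[1]$, while via Lemma \ref{ExtHom}(d) the hypothesis $M\in\Scal\cap\tube_w$ becomes $M[1]\in\mathcal{H}_{0,w}$ lying in the resolving subcategory of $T[1]$ in $\mathrm{fp}\,\Acal_w=\coh\XX_w$. The formal argument of Lemma \ref{SandT}(2) applies to our $T$ and gives $T\in\Bcal_w\Leftrightarrow\p_w\subset\Scal$, so by (1) it suffices to show $\p_w\subset\Scal$.

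To obtain $\p_w\subset\Scal$ I would argue in two steps. First, I extract a simple regular $S\in\Scal\cap\tube_w$: applying $\Hom_\Lambda(-,X)$ for $X\in\Gen T$ to $0\to S_{n-1}\to S_n\to\tau^{-(n-1)}S\to 0$ (where $M=S_n$ has regular socle $S$), and killing $\Ext^2_\Lambda$ via the projective dimension bound in Lemma \ref{5.4}, yields $S_{n-1}\in\Scal$; iterating places all $S_i$ with $i\le n$, in particular $S=S_1$, into $\Scal$. Second, I propagate from $S$ to all of $\p_w$ using the sheaf picture: in the tubular curve $\XX_w$, $\mathcal{H}_{0,w}$ is the stable separating tubular family, and a tilting object of $\Acal_w$ whose resolving subcategory in $\coh\XX_w$ meets $\mathcal{H}_{0,w}$ must sit at the corresponding central slope; transporting this slope back along $\Db\cong\Dercat(\Acal_w)$ yields $T\in\Mcal_w$. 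The hard part is exactly this second step: in the tame hereditary case Lemma \ref{SandT}(2) accomplishes the analogous propagation in one stroke because there is essentially only one tubular family, whereas in the tubular setting one must shift focus to $\Acal_w$ and combine its internal tubular structure with the slope correspondence furnished by the derived equivalence $V=\Lambda[1]$, making rigorous that resolving-containment of a finite-length sheaf forces the central slope on $T[1]$.
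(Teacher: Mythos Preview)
Your treatment of Part~(1) is correct and is exactly what the paper does.

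Part~(2), however, has a genuine gap in the second step. You correctly reduce to showing $T\in\Bcal_w$ (equivalently $\p_w\subset\Scal$), and your extraction of a simple regular $S\in\Scal\cap\tube_w$ is fine. But the ``propagation'' via the heart $\Acal_w\cong\Qcoh\XX_w$ does not give a reduction: the curve $\XX_w$ is again of \emph{tubular} type (see Section~\ref{rational} and \cite[8.1.6]{Ku}), so the claim ``a tilting object of $\Acal_w$ whose class in $\coh\XX_w$ meets $\mathcal H_{0,w}$ must sit at the central slope'' is precisely the statement you are trying to prove, translated into sheaf language. Moreover, the phrase ``resolving subcategory of $T[1]$ in $\coh\XX_w$'' is not meaningful as written, since $\coh\XX_w$ has no nonzero projectives (Proposition~\ref{sheavesashearts}), so Theorem~\ref{res} does not apply there; and any appeal to the sheaf classification (Corollary~\ref{classofsheaves}) would be circular, as that result is proved downstream of the present lemma via Theorem~\ref{limits}.

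The paper's argument is quite different and proceeds by a concrete dichotomy. If $\Scal$ contains a complete ray from $\tube_w$, then by Lemma~\ref{SandT}(4) a Pr\"ufer module $S_\infty$ of slope $w$ lies in $\Add T$; hence $\Ext^1_\Lambda(S_\infty,T)=0$, and since $S_\infty$ has projective dimension at most one, Lemma~\ref{0} forces $T$ to have no nonzero quotient in $\Pcal_w$, i.e.\ $T\in\Bcal_w$. If $\Scal$ contains no complete ray from $\tube_w$, then Lemma~\ref{SandT}(4) and Proposition~\ref{shapeofT} give $T=Y\oplus M$ with $0\neq Y\in\add\tube_w$ and $M$ torsion-free. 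The crucial point is that the universal localization $\Lambda_\Ucal$ at the regular composition factors of $Y$ is concealed canonical of \emph{domestic} type (cf.~\cite[Sec.~2]{AK}); by Proposition~\ref{downtoLukas}, $M$ is a large torsion-free tilting $\Lambda_\Ucal$-module, hence equivalent to the Lukas module by Theorem~\ref{classdomestic}, and one deduces $M\in\Bcal_w$ via Proposition~\ref{prop:sumofprufer}(2). This reduction to the domestic case through universal localization is the idea your plan is missing.
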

\begin{proof}
(1) is shown as in Lemma \ref{SandT}(1). 

(2) If $\Scal$ contains a ray from $\tube_w$, then we know from Lemma
\ref{SandT} that $\Add T$ contains a Pr\"ufer module $S_\infty$ of
slope $w$.  Then $\Ext^1_\Lambda(S_\infty,T)
=0$, and using that $S_\infty$ has projective 
dimension at most one, it follows from Lemma \ref{0} that $T$ cannot
have non-zero factor modules in $\Pcal_w$.
Considering the torsion pair $(\Bcal_w,\Pcal_w)$, 
we infer that $T\in\Bcal_w$, so $T$ has slope $w$.

If $\Scal$ does not contain a complete ray from $\tube_w$, then we
know from Lemma \ref{SandT} and Proposition \ref{shapeofT} that $T$
has the form $T=Y\oplus M$ where $0\not=Y\in\add\tube_w$ and
$M\in\Fcal_w$.  Let $\Ucal$ be the set of regular composition factors
of $Y$. Then $\Lambda_\Ucal$ is a concealed canonical algebra of
domestic type (cf.~\cite[Sec.~2]{AK}), and by Proposition
\ref{downtoLukas}, $M$ is a large torsion-free tilting module over
$\Lambda_\Ucal$.  By the classification in Theorem \ref{classdomestic}
it follows that $M$ is equivalent to the Lukas tilting module over
$\Lambda_\Ucal$. In particular $\Hom_{\Lambda_\Ucal}(M,P')=0$ for all
$P'\in\p_\Ucal$. Now every $P\in\p_w$ embeds in
$P\otimes_\Lambda\Lambda_\Ucal\in\p_\Ucal$ by Proposition
\ref{prop:sumofprufer}, thus also $\Hom_{\Lambda}(M,P)=0$.  We
conclude that $M\in\Bcal_w$, and $T$ has slope $w$.
\end{proof}

\begin{thm}\label{limits}
  Let $T$ be a large tilting module, and assume there is $w\in\R^+$
  such that $T\in\Ccal_w$ but $T\not\in\Ccal_\alpha$ for any
  $\alpha<w$. Then $T$ has slope $w$.
\end{thm}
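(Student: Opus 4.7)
The plan is to verify $T\in\Bcal_w$, which combined with the hypothesis $T\in\Ccal_w$ yields $T\in\Mcal_w$, so $T$ has slope~$w$. Set $\Scal={}^\perp(T^\perp)\cap\mla$. By Lemma~\ref{1}(1), $\Scal\subset\Ccal_w\cap\mla$. Using the separation properties of the trisections $(\p_\gamma,\tube_\gamma,\q_\gamma)$ together with Lemma~\ref{BwCw}(2), the fg part $\Ccal_\gamma\cap\mla$ equals $\add(\p_\gamma\cup\tube_\gamma)$ for $\gamma\in\Q^+$ and $\add\p_\gamma$ for $\gamma\in\R^+\setminus\Q^+$, so the inclusion becomes explicit.

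If $w\in\Q^+$ and $\Scal\cap\tube_w\neq\emptyset$, Lemma~\ref{1}(2) applies directly and we are done. Otherwise $\Scal\subset\add\p_w$. For any $\alpha<w$, the hypothesis $T\notin\Ccal_\alpha$ combined with Lemma~\ref{1}(1) and the identification $\Ccal_\alpha\cap\mla=\add(\p_\alpha\cup\tube_\alpha)$ forces $\Scal\not\subset\add(\p_\alpha\cup\tube_\alpha)$; since $\Scal\subset\add\p_w$, this leaves a summand in some $\tube_\gamma$ with $\alpha<\gamma<w$. Letting $\alpha$ range over $[0,w)$ produces a cofinal sequence of rationals $\beta_n\nearrow w$ with $\Scal\cap\tube_{\beta_n}\neq\emptyset$.

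To conclude, by Lemma~\ref{BwCw}(1) we have $\Bcal_w=\bigcap_{\alpha<w,\,\alpha\in\Q^+}\Dcal_\alpha$, so it suffices to show $\Hom(T,\tube_\alpha)=0$ for every rational $\alpha<w$. For each~$n$, pick $N_n\in\Scal\cap\tube_{\beta_n}$: since $T\in T^\perp$ we get $\Ext^1(N_n,T)=0$, and the Auslander--Reiten formula (Lemma~\ref{Sto}), applicable because $N_n$ has projective dimension at most one by Lemma~\ref{5.4}, yields $\Hom(T,\tau N_n)=0$. This gives the vanishing of $\Hom(T,-)$ on a single module of~$\tube_{\beta_n}$. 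The main obstacle is to upgrade this to the whole family~$\tube_{\beta_n}$, for which I would combine the resolving structure of~$\Scal$ (closed under extensions, direct summands, and kernels of epimorphisms, and containing~$\Lambda$) with the separation property, which embeds every module of $\tube_\alpha$ (for $\alpha<\beta_n$) into a module of $\add\tube_{\beta_n}$. Alternatively, fixing a rational $v>w$, one may pass to the heart $\A=\Qcoh\XX$ of the t-structure induced by the trisection at level~$v$: then $T\in\Ccal_v$ forces $T[1]\in\A$ to be a tilting sheaf by Corollary~\ref{shape2}, and the sheaf-theoretic slope in the hereditary category~$\A$ provides a cleaner cofinality argument along the $\beta_n$'s. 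Iterating over~$n$ and using separation to cogenerate $\tube_\alpha$ by $\tube_{\beta_n}$ for $\beta_n>\alpha$ then delivers $\Hom(T,\tube_\alpha)=0$ for every rational $\alpha<w$, and hence $T\in\Bcal_w$.
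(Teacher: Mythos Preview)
Your setup is correct and parallels the paper: reducing to $\Scal\subset\add\p_w$, producing a sequence $\beta_n\nearrow w$ with $\Scal\cap\tube_{\beta_n}\neq\emptyset$, and aiming at $T\in\Bcal_w=\bigcap_{\alpha<w}\Dcal_\alpha$. The gap is exactly where you flag it. From a single $N_n\in\Scal\cap\tube_{\beta_n}$ the Auslander--Reiten formula only yields $\Hom(T,\tau N_n)=0$, and neither the resolving closure of $\Scal$ nor the separation property upgrades this to $\Hom(T,\tube_{\beta_n})=0$: resolving closure gives you at most the regular submodules of $N_n$, not the whole tube, and separation only helps once vanishing on all of $\tube_{\beta_n}$ is known. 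Your sheaf-theoretic alternative is circular in this paper's logic, since Corollary~\ref{classofsheaves} is deduced \emph{from} Theorem~\ref{limits}.

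The paper closes the gap by a different mechanism: it introduces auxiliary tilting modules. For each $i$ set $\Scal_i=\Scal\cap\Ccal_{\alpha_i}$; this is again resolving, so by Theorem~\ref{res} there is a tilting module $T_i$ with $T_i^\perp=\Scal_i^\perp$. Since $\Scal_i\cap\tube_{\alpha_i}\neq\emptyset$, Lemma~\ref{1}(2) applies to $T_i$ (not to $T$!) and gives $T_i\in\Mcal_{\alpha_i}\subset\Bcal_{\alpha_i}$. Now $\Scal=\bigcup_i\Scal_i$ forces $T^\perp=\bigcap_i T_i^\perp$, so $T\in T_i^\perp=\Gen T_i\subset\Bcal_{\alpha_i}\subset\Bcal_v$ for every $v<\alpha_i$. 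The point is that Lemma~\ref{1}(2) already packages the hard work (via Proposition~\ref{downtoLukas} and the domestic classification) needed to pass from ``$\Scal$ meets one tube'' to ``the tilting module lies in $\Bcal$'', but it must be applied to the truncated $T_i$, not to $T$ itself.
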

\begin{proof}
  First of all, we know from Lemma \ref{1} that
  $\Scal={}^\perp(T^\perp)\cap\mla\subset\Ccal_w$, and we can assume
  w.l.o.g. $\Scal\subset\add\p_w$. This is clear if $w$ is irrational,
  because $\Ccal_w\cap\mla=\add\p_w$, and for $w\in\Q^+\cup\{\infty\}$
  it follows from Lemma \ref{1}(2).

  Furthermore, the assumption on $w$ implies that $\Scal$ is not
  contained in $\add\p_\alpha$ for any $\alpha<w$. So there is an
  increasing sequence of rational numbers $\alpha_1<\alpha_2<\ldots<w$
  converging to $w$ such that
  $\Scal\cap\tube_{\alpha_i}\not=\emptyset$ for all $i$. Let us
  consider the increasing sequence of
  subcategories $$\Scal_1\subset\Scal_2\ldots\subset\mla$$ given by
  $\Scal_i=\Scal\,\cap\,\Ccal_{\alpha_i}$ for all $i\in\N$. Since
  $\Scal$ and $\Ccal_{\alpha_i}={}^\perp\q_{\alpha_i}$ are resolving
  subcategories, the $\Scal_i$ are resolving subcategories of $\mla$
  giving rise to a decreasing sequence of tilting
  classes $$\Scal_1\,^\perp\supset\Scal_2\,^\perp\supset\ldots$$ For
  each $i\in\N$ let $T_i$ be a tilting module with
  $T_i\,^\perp=\Scal_i\,^\perp$. By the bijection in Theorem \ref{res}
  we have $\Scal_i={}^\perp(T_i^\perp)\cap\mla$, and since
  $\Scal_i\cap\tube_{\alpha_i}\not=\emptyset$, we infer from Lemma
  \ref{1} that $T_i$ has slope $\alpha_i$.
 
  Next we observe that $\Scal=\bigcup_{i\in\N}\Scal_i$. Indeed, if
  $X\in\Scal$, and $X$ is indecomposable w.l.o.g., then $X\in\p_w$
  either belongs to $\p_0$ and is therefore contained in all
  $\Scal_i$, or it belongs to $\tube_\alpha$ for some $\alpha<w$ and
  is therefore contained in $\Scal_i=\Scal\cap\Ccal_{\alpha_i}$ for
  $i$ with $\alpha<\alpha_i<w$.
 
  It follows that
  $\Scal^\perp=\bigcap_{i\in\N}\Scal_i\,^\perp=\bigcap_{i\in\N}T_i\,^\perp$
  and thus $T\in\bigcap_{i\in\N}T_i\,^\perp$.  We claim that
  $T\in\Bcal_w$, which will yield that $T$ has slope $w$. By Lemma
  \ref{BwCw}, the claim amounts to showing that $T\in\Bcal_v$ for all
  $v<w$.  So take $v<w$ and $i\in\N$ such that $v<\alpha_i
  <w$.
  Observe that $T_i$ belongs to $\Bcal_{\alpha_i}\subset\Bcal_v$
  because it has slope $\alpha_i$. As $\Bcal_v$ is a torsion class,
  also $T_i\,^\perp=\Gen T_i\subset \Bcal_v$. But then also
  $T\in T_i\,^\perp$ belongs to $\Bcal_v$, which completes the proof.
\end{proof}

\begin{rem} By \cite{BC} one can realize a tilting module $T$ as above
  as a direct limit of tilting modules of increasing slope
  $\alpha_1<\alpha_2<\ldots<w$.

\smallskip

Furthermore, Theorem \ref{limits} has a dual version: if $C$ is a
large cotilting module, and there is $w\in\R^+$ such that
$C\in\Bcal_w$ but $C\not\in\Bcal_\alpha$ for any $\alpha>w$, then $C$
has slope $w$. This is proved by using that the dual module $D(C)$ is
a tilting module in the central part of $\LMla$.\end{rem}

\smallskip

Let now $\mathbb X$ be a noncommutative curve of genus zero of tubular
type. The slope of an indecomposable coherent sheaf is defined as the
ratio of the degree by the rank. It is a rational number, unless the
sheaf has finite length, in which case the rank is zero and the slope
$\infty$. The coherent sheaves of a given slope $w\in\Q\cup\{\infty\}$
form a tubular family denoted by $\widehat{\tube}_w$. One then extends
the notion of slope to all quasi-coherent sheaves like in Section
\ref{slope}. For details, we refer to \cite{AK}. We can now recover
the classification of large quasi-coherent tilting sheaves over
$\mathbb X$ from \cite[Thm.~7.14]{AK}.
\begin{cor}\label{classofsheaves}
  Let $\mathbb X$ be a noncommutative curve of genus zero of tubular
  type. Then every large tilting sheaf in $\Qcoh\XX$ has a slope
  $w\in\R\cup\{\infty\}$. The large tilting sheaves of slope
  $w\in\Q\cup\{\infty\}$ are parametrized by the pairs $(Y,P)$ given
  by a branch object $Y$ in $\widehat{\tube}_w$ and a subset
  $P\subset\mathbb X_w$, where $\mathbb X_w$ is a noncommutative curve
  of genus zero parametrizing the family $\widehat{\tube}_w$. The
  tilting sheaves of irrational slope $w$ are equivalent to the Lukas
  tilting sheaf $\widehat{\mathbf L_w}$.
\end{cor}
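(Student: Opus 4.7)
The plan is to transfer the module-theoretic classification results of Theorems \ref{tiltinginM}, \ref{limits}, and \ref{irrational slope} to the sheaf setting, using the derived equivalence between $\Qcoh\XX$ and $\Mla$ for a suitable concealed canonical algebra $\Lambda$.

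Given a large tilting sheaf $\widehat T\in\Qcoh\XX$, I would first apply \cite[Lem.~7.10]{AK} to obtain a tilting bundle $T_{cc}\in\coh\XX$ with $\widehat T\in\Gen T_{cc}$. Setting $\Lambda=\End T_{cc}$, this is a concealed canonical algebra of tubular type, and the resulting derived equivalence places $\Qcoh\XX$ as the heart $\A$ of the t-structure in $\mathcal D(\Mla)$ induced by $(\Gen\q,\Ccal)$, as in Section \ref{setup}. Under this equivalence $T_{cc}$ corresponds to $V=\Lambda[1]$, so $\widehat T\in\Gen V=\Ccal[1]$ by Proposition \ref{shape}, and hence $\widehat T=T[1]$ for a unique $T\in\Ccal$; by Corollary \ref{shape2}, $T$ is then a tilting $\Lambda$-module, and it is large because $\widehat T$ is.

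Next, I would apply Theorem \ref{limits} (together with its dual stated in the following remark to cover the upper-end case) to conclude that $T$ has a module-theoretic slope $\tilde w\in\R^+\cup\{0,\infty\}$ in the sense of Subsection \ref{slope}. Via the tilting dictionary, this translates into a sheaf slope $w\in\R\cup\{\infty\}$ for $\widehat T$, the assignment $\tilde w\mapsto w$ being an order-preserving correspondence determined by the slope of $T_{cc}$. For irrational $\tilde w$, Theorem \ref{irrational slope}(1) yields that $T$ is equivalent to $\mathbf L_{\tilde w}$, and consequently $\widehat T$ is equivalent to $\widehat{\mathbf L_w}$. For rational $\tilde w$ (including $\tilde w=0,\infty$), Theorem \ref{tiltinginM} parametrizes $T$ by a pair $(Y,P)$ consisting of a branch module $Y$ in $\tube_{\tilde w}$ together with a subset $P\subset\mathbb X_{\tilde w}$. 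Since $\tube_{\tilde w}[1]$ matches $\widehat\tube_w$ under the equivalence and $\mathbb X_{\tilde w}\cong\mathbb X_w$ (compare Corollary \ref{resume} and \cite[8.1.6]{Ku}), this transports to the desired parametrization of $\widehat T$ by pairs $(Y,P)$ with $Y$ a branch object in $\widehat\tube_w$ and $P\subset\mathbb X_w$.

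The principal obstacle is the bookkeeping of the slope shift between modules over $\End T_{cc}$ and sheaves over $\mathbb X$: a sheaf slope $w\in\Q\cup\{\infty\}$ can correspond on the module side either to a stable tubular family $\tube_{\tilde w}$ with $\tilde w\in\Q^+$ or to one of the non-stable families $\tube_0,\tube_\infty$, depending on the slope of the chosen tilting bundle $T_{cc}$, whereas Theorem \ref{tiltinginM} is naturally formulated for stable tubular families. The key observation overcoming this is that the tilting bundle $T_{cc}$ can always be chosen so that the tubular family of $\coh\XX$ targeted by $\widehat T$ (together with a sufficient band of nearby slopes) falls within the stable central part of $\Mla$; this reduces the extremal sheaf slopes to the stable case already handled. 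Once this adaptive choice is justified, the classification transports verbatim and Corollary \ref{classofsheaves} follows.
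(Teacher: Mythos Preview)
Your proposal follows essentially the same route as the paper: cite \cite[Lem.~7.10]{AK} to get a tilting bundle $T_{cc}$ generating $\widehat T$, pass to the concealed canonical algebra $\Lambda=\End T_{cc}$, use Corollary~\ref{shape2} to write $\widehat T=T[1]$ with $T$ a tilting $\Lambda$-module in the torsionfree class, and then invoke Theorem~\ref{limits} and the classification of Corollary~\ref{resume}.

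The one point where the paper is sharper concerns precisely the boundary issue you flag at the end. You propose an adaptive re-choice of $T_{cc}$ and an appeal to the dual of Theorem~\ref{limits} to keep the module-side slope away from the non-stable families $\tube_0,\tube_\infty$. The paper avoids both. Its key observation is that \emph{any} tilting bundle already suffices: under the functor $\Hom_\A(T_{cc},-)$ the tubular family of finite-length sheaves is carried to some $\tube_\alpha$ in $\Mla$, and since $\coh\XX$ has neither projective nor injective objects this $\tube_\alpha$ is automatically \emph{stable}, so $\alpha\in\Q^+$. Thus the torsion pair $(\Gen T_{cc},\Ker\Hom_\A(T_{cc},-))$ is the tilt of $(\Qcal_\alpha,\C_\alpha)$ for this stable $\alpha$, giving $T\in\C_\alpha$ with $\alpha\in\Q^+$ directly. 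Theorem~\ref{limits} (as stated, for tilting modules) then yields a slope $w\le\alpha$; no dual version is needed, and note that the dual in the remark is formulated for \emph{cotilting} modules, so your appeal to it would require an extra justification anyway. In short, the paper's stability observation is a one-line replacement for your adaptive-choice paragraph.
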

\begin{proof} For every tilting object $\widehat{T}$ in $\A=\Qcoh\XX$
  one can find a tilting bundle $T_{cc}$ in $\coh\XX$ such that
  $\widehat{T}\in\Gen T_{cc}$, cf.~\cite[Lem.~7.10]{AK}.  Then
  $\Lambda=\End_\A T_{cc}$ is a concealed canonical tubular algebra
  derived equivalent to $\A$, and $\Mla$ can be viewed as the heart of
  the torsion pair $(\Gen T_{cc},\Ker \Hom_\A(T_{cc},-))$ in
  $\A$. Under the tilting functor $\Hom_\A(T_{cc},-)$, the tubular
  family in $\coh\XX$ formed by the indecomposable sheaves of finite
  length becomes a tubular family $\tube_\alpha$ in the AR-quiver of
  $\Lambda$. Since $\coh\XX$ has neither projective nor injective
  objects, $\tube_\alpha$ is {stable}, that is, $\alpha\in\Q^+$. The
  torsion pair $(\Gen T_{cc},\Ker \Hom_\A(T_{cc},-))$ can now be
  viewed as the tilt of the torsion pair $(\Qcal_\alpha,\C_\alpha)$ in
  $\Mla$, that is, $\Gen T_{cc}=\C_\alpha[1]$ and
  $\Ker \Hom_\A(T_{cc},-)=\Qcal$, compare Sections \ref{hearts of
    cotilting} and \ref{setup}. We infer from Corollary \ref{shape2}
  that $\widehat{T}=T[1]$ for a tilting $\Lambda$-module
  $T\in\C_\alpha$, and $T$ must have slope $w\le \alpha$ by Theorem
  \ref{limits}. The statement now follows from Corollary
  \ref{resume}. Here $\widehat{\mathbf L_w}=\mathbf L_w[1]$.
\end{proof}


\bigskip

\bigskip
 


\begin{thebibliography}{99}

\bibitem{angarc}
{ L. Angeleri H\" ugel, M. Archetti}, 
Tilting modules and universal localization, {Forum Math.} {\bf 24} (2012), 709--731.


\bibitem{baerml}{ L.\ Angeleri H\"ugel, D.\ Herbera, and J.\ Trlifaj},
  Baer and Mittag-Leffler modules over tame hereditary artin algebras,
  {Mathematische Zeitschrift} {\bf 265} (2010), 1-19.

\bibitem{AK}{ L.~Angeleri H\"ugel, D.~Kussin}, Large tilting sheaves
  over weighted noncommutative regular projective curves, preprint.

\bibitem{APST}{ L.~Angeleri H\"ugel, D.~Posp{\'\i}{\v s}il,
    J.~\v{S}\v{t}ov\'\i\v{c}ek, J.~Trlifaj}, Tilting, cotilting, and
  spectra of commutative noetherian rings, {Trans. Amer. Math. Soc.}
  {\bf 366} (2014), 3487--3517.

\bibitem{ringepi}{ L.~Angeleri H\" ugel, J. S\'anchez}, Tilting
  modules arising from ring epimorphisms, {Algebr. Repres. Theory}
  {\bf 14} (2011), 217--246.

\bibitem{AS2} { L.~Angeleri H\"ugel, J.~S\'anchez}, Tilting modules
  over tame hereditary algebras, Journal f\"ur die Reine und
  Angewandte Mathematik {\bf 682} (2013), 1--48.

\bibitem{ATT}{ L.~Angeleri H\"ugel, A.~Tonolo, J.~Trlifaj}: Tilting
  preenvelopes and cotilting precovers, { Algebras and Representation
    Theory} {\bf 4} (2001), 155-170.
  
\bibitem{B1}{ S. Bazzoni}, Cotilting modules are pure-injective,
  Proc. Amer. Math. Soc. {\bf 131} (2003), 3665-3672.


\bibitem{B}{ S. Bazzoni}, When are definable classes tilting and
  cotilting classes? Journal of Algebra {\bf 320} (2008) 4281--4299.

\bibitem{BH}{ S. Bazzoni, D. Herbera}, One dimensional tilting modules
  are of finite type, {Algebras and Repres. Theory} {\bf 11} (2008),
  43--61.

\bibitem{BH2}{ S. Bazzoni, D. Herbera}, Cotorsion pairs generated by
  modules of bounded projective dimension, {Israel Journal Math.} {\bf
    174} (2009), 119--160.

\bibitem{BBD} { A.~A.~Beilinson, J.~Bernstein, P.~Deligne}, Faisceaux
  pervers, Ast\'erisque {\bf 100} (1982).

\bibitem{BC}{ C.Braga, F.U.Coelho}, Limits of tilting modules,
  Coll. Math. {\bf 115} (2009), 207-217.

\bibitem{BK}{ A. B. Buan, H. Krause}, Cotilting modules over tame
  hereditary algebras, Pacific J. Math., {\bf 211} (2003), 41--59.

\bibitem{Cohn}{ P. M. Cohn}, {Free Rings and Their Relations}, second ed., London Math. Soc. Monographs 19, Academic Press Inc. London, 1985.

\bibitem{CF}{ R. Colpi, K.R.Fuller}, Tilting objects in abelian
  categories and quasitilted rings, Trans. Amer. Math. Soc. {\bf 359}
  (2007), no. 2, 741--765.

\bibitem{CGM}{ R. Colpi, E. Gregorio, F. Mantese}, On the heart of a
  faithful torsion theory, Journal of Algebra {\bf 307} (2007)
  841--863. 

\bibitem{CMT}{ R. Colpi, F. Mantese, A. Tonolo},  Cotorsion pairs,
  torsion pairs, and $\Sigma$-pure-injective cotilting modules,
  J. Pure Appl. Algebra {\bf 214} (2010), no. 5, 519--525. 

\bibitem{CTT} { R. Colpi, A.Tonolo and J.Trlifaj}, Perpendicular
  categories of infinite dimensional partial tilting modules and
  transfers of tilting torsion classes, J. Pure Appl. Algebra {\bf
    211} (2007) 223--234.


\bibitem{CB}
{  W. W. Crawley-Boevey}, {Locally finitely presented additive categories},
Comm.\ Algebra {\bf 22} (1994), 1644--1674.

\bibitem{CB2}
{  W. W. Crawley-Boevey},
\newblock Infinite-dimensional modules in the representation theory of
  finite-dimensional algebras.
\newblock In { Algebras and modules, {I} ({T}rondheim, 1996)}, volume~23 of
  { CMS Conf. Proc.}, pages 29--54. Amer. Math. Soc., Providence, RI, 1998.


\bibitem{Ga}{ P. Gabriel}, {Des cat\'egories ab\'eliennes}, Bulletin
  de la Soc. Math. de France {\bf 90} (1962), 323-448

\bibitem{GL} { W.~Geigle, H.~Lenzing}, Perpendicular categories with
  applications to representations and sheaves, J. Alg.  {\bf 144}
  (1991), 273--343.

\bibitem{GM}{S.I.Gelfand, Yu.I. Manin}, Methods of Homological Algebra, Springer, 1996.

\bibitem{GT}
{ R. Goebel and J.Trlifaj},
\newblock { Approximations and Endomorphism Algebras of Modules},
\newblock W.\ de Gruyter, Berlin 2006.

\bibitem{HRS}{ D. Happel, I. Reiten, S.O. Smal\o}, Tilting in Abelian
  Categories and Quasitilted Algebras, Mem. Amer. Math. Soc. {\bf 575}
  (1996).


\bibitem{H}{ R. Harland}, Pure-injective modules over tubular algebras
  and string algebras. PhD-Thesis, University of Manchester, 2011.

\bibitem{HP}{ R. Harland, M. Prest}, Modules with irrational slope
  over tubular algebras, Proc. London Math. Soc. (2015) {\bf 110},
  695--720.

\bibitem{He} { I. Herzog}, {The Ziegler spectrum of a locally coherent
    Grothendieck category}, Proc. London Math. Soc. \textbf{74}
  (1997), 503--558.

\bibitem{KT}
{ O. Kerner, J.Trlifaj},
\newblock Tilting classes over wild hereditary algebras,
\newblock {J. Algebra} {\bf 290} (2005), 538--556.

\bibitem{K}{ H. Krause}, {The spectrum of a locally coherent
    category}, J. Pure Appl. Algebra \textbf{114} (1997), 259-271.

\bibitem{DirkDiss}{ D.Kussin}, Graduierte Faktorialit\"at und die
  Parameterkurven tubularer Familien,
Dissertation (doctoral thesis). Paderborn 1997. 

\bibitem{Ku}{ D.Kussin}, Noncommutative curves of genus zero: related
  to finite dimensional algebras, Mem. Amer. Math. Soc. {\bf 201}
  (2009), no. 942, x+128 pp.

\bibitem{L}{ H.Lenzing}, Hereditary categories, Handbook of tilting
  theory, Cambridge University Press 2007.


\bibitem{LMik}{ H.Lenzing}, Representations of finite-dimensional
  algebras and singularity theory, Trends in ring theory (Miskolc,
  1996), 71-97, CMS Conf. Proc., 22, Amer. Math. Soc., Providence, RI,
  1998.

\bibitem{LP}{ H.\ Lenzing, J. A. de la Pe\~na}, Concealed-canonical
  algebras and separating tubular families, {Proc. London Math. Soc.}
  \textbf{78} (1999), 513--540.



\bibitem{Lu}{ F. Lukas},\newblock Infinite-dimensional modules over
  wild hereditary algebras, {J. London Math. Soc.}, {\bf 44} (1991),
  401--419.

\bibitem{P}{ M. Prest}, {Purity, Spectra and Localisation}, Cambridge University Press 2009.
 
\bibitem{RR} { I.\ Reiten and C.M.\ Ringel}, {Infinite dimensional
    representations of canonical algebras}, Canad.\ J.\
  Math. \textbf{58} (2006), 180--224.

\bibitem{R} { C.~M.~Ringel}, Infinite-dimensional representations of
  finite-dimensional hereditary algebras.  In { Symposia
    {M}athematica, {V}ol. {XXIII} ({C}onf. {A}belian {G}roups and
    their {R}elationship to the {T}heory of {M}odules, {INDAM},
    {R}ome, 1977)}, pages 321--412. Academic Press, London, 1979.



\bibitem{Rac}{ C.~M.~Ringel}, Algebraically compact modules arising
  from tubular families. A survey. Algebra Colloquium {\bf 11} (2004),
  155-172.

\bibitem{Scho}{ A.H.~Schofield}, Representations of Rings over Skew
  Fields, London Math. Soc. Lecture Note Series 2, Cambridge
  Univ. Press, 1985.

 \bibitem{S1}{ A.H.~Schofield}, Severe right Ore sets and universal localization, arxiv:0708.0246

 \bibitem{S2}{ A.H.~Schofield}, Universal localization of hereditary rings, arxiv:0708.0257.

 \bibitem{KST}{ J.\v{S}\v{t}ov\'\i\v{c}ek, O. Kerner, J. Trlifaj},
   Tilting via torsion pairs and almost hereditary noetherian rings,
   J. Pure Appl. Algebra {\bf 215} (2011), 2072-2085.


 \bibitem{Sto}{ J.~\u{S}t\'{o}v\'{i}\u{c}ek}, Diploma Thesis, Charles
   University Prague 2004.

 \bibitem{T}{ J. Trlifaj}, Infinite dimensional tilting modules and
   cotorsion pairs, Handbook of Tilting Theory, Cambridge University
   Press 2007.

\end{thebibliography}
\end{document}